\documentclass[a4paper, 12pt, reqno, dvipsnames]{amsart}

\usepackage[normalem]{ulem} 
\usepackage{lmodern}
\usepackage[english]{babel}
\usepackage[latin1]{inputenc}
\usepackage{microtype}
\usepackage{amsfonts,amssymb,amsmath,amsthm,mathrsfs}
\usepackage{mathtools}
\usepackage{color, graphicx}
\usepackage[all]{xy}
\usepackage{hyperref} 
\usepackage[inline]{enumitem}
\usepackage{cleveref}
\usepackage{a4wide}
\usepackage{upgreek}
\usepackage{etoolbox}

\setcounter{tocdepth}{2}

\makeatletter
\patchcmd{\@setaddresses}{\indent}{\noindent}{}{}
\patchcmd{\@setaddresses}{\indent}{\noindent}{}{}
\patchcmd{\@setaddresses}{\indent}{\noindent}{}{}
\patchcmd{\@setaddresses}{\indent}{\noindent}{}{}
\makeatother



\makeatletter
\DeclareMathSizes{\@xpt}{\@xpt}{6}{5}
\makeatother

%

\makeatletter
\def\namedlabel#1#2{\begingroup
    #2%
    \def\@currentlabel{#2}%
    \phantomsection\label{#1}\endgroup
}
\makeatother

\allowdisplaybreaks
%
%
%
%
%

\theoremstyle{plain}

\newtheorem{theorem}{Theorem}[section]
\newtheorem{lemma}[theorem]{Lemma}
\newtheorem{proposition}[theorem]{Proposition}
\newtheorem{corollary}[theorem]{Corollary}
\newtheorem*{theorem*}{Theorem}

\theoremstyle{definition}
\newtheorem{definition}[theorem]{Definition}
\newtheorem{example}[theorem]{Example}

\theoremstyle{remark}
\newtheorem{remark}[theorem]{Remark}

\def\ol{\overline}

\newcommand{\N}{\mathbb{N}}
\newcommand{\Z}{\mathbb{Z}}
\newcommand{\Q}{\mathbb{Q}}

\newcommand{\C}{\mathbb{C}}
\newcommand{\K}{\Bbbk}



\newcommand{\cl}[1]{\overline{#1}} 
\newcommand{\coinv}[2]{{#1}{}^{\mathrm{co}{#2}}} 

\newcommand{\op}[1]{#1^{\mathrm{op}}} 
\newcommand{\mf}[1]{\mathfrak{#1}} 
\newcommand{\ms}[1]{\mathsf{#1}}



\renewcommand{\ker}{\mathsf{ker}} 

\newcommand{\id}{\mathsf{Id}} 

\newcommand{\Hom}[6]{{_{#1}^{#2}\mathsf{Hom}_{#3}^{#4}}\left({#5},{#6}\right)} 
\newcommand{\End}[2]{\mathsf{End}_{#1}(#2)} 


\newcommand{\alg}{\mathsf{Alg}}
\newcommand{\algk}{\alg_{\K}} 
\newcommand{\vectk}{\mathsf{Vect}_{\K}} 
\newcommand{\Set}{{\mathsf{Set}}} 
\newcommand{\Top}{{\mathsf{Top}}} 
\newcommand{\Ab}{{\mathsf{Ab}}} 
\newcommand{\Bim}[2]{{}_{#1}\M{}_{#2}} 
\newcommand{\Rmod}[1]{\M_{#1}} 
\newcommand{\Bimod}[1]{\Bim{#1}{#1}} 
\newcommand{\gparcom}[1]{\mathsf{gPCom}^{#1}} 
\newcommand{\com}[1]{\mathsf{Com}^{#1}} 
\newcommand{\ncom}[1]{\mathsf{NCom}^{#1}} 
\newcommand{\pcd}[1]{\mathsf{PCD}^{#1}} 
\newcommand{\ParRep}{\mathsf{PRep}} 
\newcommand{\ParMod}{\mathsf{PMod}} 
\newcommand{\Cov}{\mathsf{Cov}} 
\newcommand{\Com}{\mathsf{Com}}
\newcommand{\Mod}{\mathsf{Mod}}
\newcommand{\gphopfmod}[1]{\mathsf{gHPCom}_{#1}^{#1}}

\newcommand{\Ind}{\mathsf{Ind}}


\newcommand{\cB}{{\mathcal B}}
\newcommand{\cC}{{\mathcal C}}
\newcommand{\cD}{{\mathcal D}}

\newcommand{\cF}{{\mathcal F}}
\newcommand{\cG}{{\mathcal G}}

\newcommand{\cI}{{\mathcal I}}
\newcommand{\cJ}{{\mathcal J}}

\newcommand{\cP}{{\mathcal P}}

\newcommand{\cZ}{{\mathcal Z}}



\newcommand{\M}{\mathsf{Mod}} 
\newcommand{\I}{\mathbb{I}} 
\newcommand{\clambda}{\mf{l}}
\newcommand{\crho}{\mf{r}}





\newcommand{\ot}{\otimes}
\newcommand{\bul}{\bullet}

\newbox\pullbackbox
\setbox\pullbackbox=\hbox{\xy 0;<1mm,0mm>: \POS(2,0)\ar@{-} (0,2) \ar@{-} (4,2)
\endxy}
\def\pullback{\copy\pullbackbox}

\newbox\pushoutbox
\setbox\pushoutbox=\hbox{\xy 0;<1mm,0mm>: \POS(2,2)\ar@{-} (0,0) \ar@{-} (4,0)
\endxy}
\def\pushout{\copy\pushoutbox}






\title[Globalization of geometric partial comodules in Abelian categories]{Geometric partial comodules over flat coalgebras in Abelian categories are globalizable}

\author{Paolo Saracco}
\address{D\'epartement de Math\'ematique, Universit\'e Libre de Bruxelles, Boulevard du Triomphe, B-1050 Brussels, Belgium.}
\urladdr{\url{sites.google.com/view/paolo-saracco}}
\urladdr{\url{homepages.ulb.ac.be/~psaracco}}
\email{paolo.saracco@ulb.be}

\author{Joost Vercruysse}
\address{D\'epartement de Math\'ematique, Universit\'e Libre de Bruxelles, Boulevard du Triomphe, B-1050 Brussels, Belgium.}
\urladdr{\url{homepages.ulb.ac.be/~jvercruy}}
\email{jvercruy@ulb.be}

\thanks{PS is a Charg\'e de Recherches of the Fonds de la Recherche Scientifique - FNRS and a member of the National Group for Algebraic and Geometric Structures and their Applications (GNSAGA-INdAM). 
JV thanks the FNRS (National Research Fund of the French speaking community in Belgium) for support via the MIS project `Antipode' (Grant F.4502.18).}
\keywords{Globalization; partial action; partial coaction; partial representation; partial corepresentation; geometric partial comodule; abelian category; Hopf algebra, non-coassociative comodule.}
\subjclass[2010]{16T15, 16W22, 18A40} 

\begin{document}

\begin{abstract}
The aim of this paper is to prove the statement in the title. As a by-product, we obtain new globalization results in cases never considered before, such as partial corepresentations of Hopf algebras. Moreover, we show that for partial representations of groups and Hopf algebras, our globalization coincides with those described earlier in literature. Finally, we introduce Hopf partial comodules over a bialgebra as geometric partial comodules in the monoidal category of (global) modules. By applying our globalization theorem we obtain an analogue of the fundamental theorem for Hopf modules in this partial setting.
\end{abstract}

\maketitle



\section{Introduction}

Originally introduced as a method to classify certain classes of $C^*$-algebras as generalized crossed products \cite{Exel1}, partial group actions quickly attracted the attention of the algebra community and since the beginning of the century, they have been intensively studied from a purely algebraic point of view, resulting in
remarkable applications and theoretic developments. 
Currently, partial actions and related structures are studied at various levels of generality, including partial (co)actions of (weak or multiplier) Hopf algebras \cite{Caenepeel-Janssen,Castro,Fonseca}, semigroups (e.g., recently, \cite{Khrypchenko}), inductive constellations \cite{Gould}, groupoids, and, more generally, categories \cite{Nystedt}.
For a more exhaustive summary and an idea of the impact of partial (co)actions on contemporary Mathematics, we refer the reader to the recent survey \cite{Doku2} and, in particular, to the references therein.

One of the main sources of examples of partial actions of groups is provided by the \emph{restriction} of the global action of a group $G$ on a set $X$ to arbitrary subsets of $X$. In fact, it has been proved that any partial group action can be realized as the restriction of a certain (minimal) global action, called its \emph{globalization} or \emph{enveloping action}. Globalizations proved immediately to play an important role in the study of partial actions, for instance in the development of Galois theory of partial group actions in \cite{DokuFerreroPaques} as well as in a series of other ring theoretic and Galois theoretic investigations.
Thus, a relevant question which arose very soon is the problem of existence and uniqueness of such globalizations.

A considerable number of globalization results already exists in the literature, either in a complete form (that is, existence and uniqueness) or in the form of sufficient conditions and criteria for a partial action to admit a globalization. For instance, we have globalization theorems for partial actions of groups on topological spaces (\cite{Abadie,KellendorkLawason,Novikov}), on unital associative algebras (\cite{DokuExel}), on $s$-unital rings (\cite{DokuDelRioSimon}), on semiprime rings (\cite{BemmFerrero,CortesFerrero}), we have globalization theorems for partial actions of Hopf algebras on unital algebras \cite{AlvesBatista2,AlvesBatista} (which however are not necessarily unital), on $\K$-linear categories \cite{AlvaresBatista}, for twisted partial actions of Hopf algebras \cite{AlvesBatistaDokuPaques}, partial modules over Hopf algebras \cite{AlvesBatistaVercruysse}, partial actions of multiplier Hopf algebras \cite{Fonseca}, and we have globalization theorems for partial groupoid actions on rings \cite{BagioPaques}, on $s$-unital rings \cite{BagioPinedo}, on $R$-categories \cite{MarinPinedo}. Again, we refer to \cite{Doku2} for further references and for a glimpse of the importance of the globalization procedure. 

Lead by the need of a unified approach to the vast panorama of the theory of partial actions and in order to tackle the globalization problem, in \cite{Saracco-Vercruysse} we discussed the question from a category-theoretical perspective by taking advantage of the notion of \emph{geometric partial comodule} introduced in \cite{JoostJiawey}. We also provided a genuine procedure to construct globalizations (whenever they exist) that can be applied to many concrete cases of interest. 
In \cite{Saracco-Vercruysse1}, we already studied globalization of geometric partial comodules in the opposite of the categories of sets and topological spaces and in the category of algebras over a commutative ring (or, more precisely, we studied the globalization of geometric partial modules in $\Set$, $\Top$ and $\op{\algk}$). In the present paper we analyse the globalization problem in abelian categories, proving that in this setting globalization always exists under very mild conditions, and we apply this result in several concrete situations, recovering on the one hand some known globalization constructions, and providing on the other hand original ones, which have been never studied before.

We believe that the globalization results obtained in this paper should help to understand the connection between geometric partial comodules and other structures studied in literature, such as {\em glider representations} \cite{CaeVO}, which were recently proved to have a rich homological structure \cite{RubenAdam}. Future investigations should reveal if the category of geometric partial comodules could have a similar behaviour.

Concretely, after recalling the main features of the theory of geometric partial comodules over coalgebras in \S\ref{ssec:gparcom} and of the globalization construction in \S\ref{ssec:globgen}, we provide in \S\ref{ssec:noncoass} a general method to construct geometric partial comodules out of non-coassociative coactions (Theorem \ref{thm:constructgeometric}). This will allow us, in \S\ref{sec:abelian}, to easily show how a number of well-studied partial structures are, in fact, particular instances of geometric partial comodules. 
Our main result is presented in 
\S\ref{sec:abelian}, where we prove that globalization exists 
for geometric partial comodules over flat coalgebras in abelian monoidal categories (see Theorem \ref{thm:uffa}). In the subsequent sections, we apply it in a number of cases of interest. 
Namely, in \S\ref{ssec:hopf} we show that the partial modules over a Hopf algebra $H$, introduced in \cite{ParRep} in terms of partial representations of $H$, provide examples of geometric partial comodules in the opposite of the category of vector spaces. We also show that the so-called {\em standard dilation} of a partial representation as described in \cite{AlvesBatistaVercruysse} (see \cite{Abadie2} for the case of partial representations of groups) coincides with the globalization of the associated geometric partial comodule. After, in \S\ref{ssec:groups}, we verify that the globalization for partial representations of finite groups, as introduced and applied in \cite{MicheleWilliam}, can be recovered by our approach, too. 
In \S\ref{ssec:pcom} we prove that the partial comodules arising from corepresentations as defined in \cite{ParCorep} can be viewed as geometric partial comodules, this time in the category of vector spaces, and we obtain then a globalization theorem for these ``algebraic'' partial comodules, a construction that was completely unknown before. Finally, in \S\ref{sec:Hopf} we consider geometric partial comodules in the monoidal category of modules over a bialgebra, introducing in this way a suitable notion of ``Hopf partial comodules'' and establishing a globalization result for these, as well. As an application, we present a version of the fundamental theorem for these Hopf partial comodules and we consider the particular case of partially graded group representations.


\section{Preliminaries: Geometric partial comodules and globalization}\label{sec:preliminaries}

\subsection{Geometric partial comodules}\label{ssec:gparcom}

Let $\left(\cC,\otimes,\I\right)$ be a (strict) monoidal, locally small (also known as $\Set$-enriched) 
category with pushouts. For any object $X$ in $\cC$, we usually denote the identity morphism on $X$ again by $X$. Moreover, for any algebra $A$ and any coalgebra $H$ in $\cC$, we denote by $\Mod_A$ the category of (right) $A$-modules and by $\Com^H$ the category of (right) $H$-comodules.

A {\em partial comodule datum} over a coalgebra $(H,\Delta,\varepsilon)$ in $\cC$ is a cospan
\[
\begin{gathered}
\xymatrix@R=7pt{
X \ar[dr]_-{\rho_X} & & X\otimes H \ar@{->>}[dl]^(0.45){\pi_X} \\
 & X\bullet H & 
}
\end{gathered}
\]
in $\cC$ where $\pi_X$ is an epimorphism.
Any partial comodule datum induces canonically the following pushouts 
\begin{equation}\label{doublebullets}
\begin{gathered}
\xymatrix @!0 @R=37pt @C=55pt{
& X\ot H \ar@{->>}[dl]_-{\pi_X} \ar[dr]^-{\rho_X\ot H}\\
X\bul H \ar[dr]_(0.4){\rho_X\bul H}  && (X\bul H)\ot H \ar@{->>}[dl]^(0.4){\pi_{X\bul H}}\\
&(X\bul H)\bul H\ar@{}[u]|<<<{\pushout}
}
\end{gathered}
\qquad\text{and}\qquad
\begin{gathered}
\xymatrix @!0 @R=37pt @C=55pt{
& X\ot H \ar@{->>}[dl]_(0.55){\pi_X} \ar[dr]^(0.55){\quad (\pi_X \ot H)\circ (X\ot \Delta)} \\
X\bul H \ar[dr]_(0.4){X\bul \Delta} & & (X\bul H)\ot H\ar@{->>}[dl]^(0.45){\pi_{X,\Delta}} \\
& X\bul(H\bul H) \ar@{}[u]|<<<{\pushout} &
}
\end{gathered}
\end{equation}

\begin{definition}\cite{JoostJiawey}
Let $(H,\Delta,\varepsilon)$ be a coalgebra in $\cC$. A {\em geometric partial comodule} over $H$ is a partial comodule datum $(X,X\bul H,\pi_X,\rho_X)$ that satisfies the following conditions.
\begin{enumerate}[label=({GPC\arabic*}),leftmargin=1.7cm]
\item\label{item:QPC1} Counitality:
there exists a morphism $X\bul\varepsilon:X\bul H\to X$ which makes the following diagram commutative
\[
\begin{gathered}
\xymatrix @!0 @R=16pt @C=65pt{
X  \ar@/_2.5ex/[dddr]_-{\id_X} \ar[dr]^-{\rho_X} && X\ot H  \ar@{->>}[dl]_-{\pi_X}\ar@/^2.5ex/[dddl]^-{X\ot\varepsilon} \\
& X\bul H \ar[dd]_-{X\bul\varepsilon} & \\
 & & \\
& X.
}
\end{gathered}
\]
\item\label{item:QPC2} Geometric coassociativity:
there exists an isomorphism 
\[\theta:X\bul(H\bul H)\to (X\bul H)\bul H\] 
such that the following diagram commutes
\[
\xymatrix @R=20pt @C=35pt{
X \ar[r]^-{\rho_X} \ar[d]_-{\rho_X} & X\bul H \ar[r]^-{\rho_X\bul H} & (X\bul H)\bul H 
\\
X\bul H \ar[r]_-{X\bul \Delta} & X\bul(H\bul H) \ar[ur]_{\theta} & (X\bul H)\ot H. \ar@{->>}[l]^-{\pi_{X,\Delta}} \ar@{->>}[u]_-{\pi_{X\bul H}}
}
\]
\end{enumerate}

If $(X,X\bul H,\pi_X,\rho_X)$ and $(Y,Y\bul H,\pi_Y,\rho_Y)$ are geometric partial comodules, then a \emph{morphism of geometric partial comodules} is a pair $(f,f\bul H)$ of morphisms in $\cC$ with $f:X\to Y$ and $f\bul H: X\bul H \to Y\bul H$ such that the following diagram commutes
\begin{equation}\label{eq:shield}
\begin{gathered}
\xymatrix @!0 @R=18pt @C=60pt {
X \ar[dd]_-{f} \ar[dr]^-{\rho_X} & & X\ot H \ar@{->>}[dl]_-{\pi_X} \ar[dd]^-{f\ot H} \\ 
 & X\bul H \ar[dd]_-{f\bul H} &  \\
Y \ar[dr]_-{\rho_Y} & & Y \ot H \ar@{->>}[dl]^-{\pi_{Y}} \\
 & Y \bul H. & 
}
\end{gathered}
\end{equation}
We will often denote a geometric partial comodule $(X,X\bul H,\pi_X,\rho_X)$ simply by $X$ and a morphism as above simply by $f$. Moreover, we denote by $\gparcom{H}$ the category of geometric partial comodules over $H$ and their morphisms.
\end{definition}

\begin{remark}
Any usual global comodule $(Y,\delta_Y)$ over $H$ is naturally a geometric partial comodule $(Y,Y \ot H,\id_{Y \ot H},\delta_Y)$. In fact, $\Com^H$ is a full subcategory of $\gparcom{H}$ with associated embedding functor $\cI:\com{H} \to \gparcom{H}$.
\end{remark}


\subsection{Globalization for geometric partial comodules}\label{ssec:globgen}

Recall from \cite[Example 2.5]{JoostJiawey} that, for a given epimorphism $p : Y \to X$ in $\cC$, the pushout
\begin{equation}\label{eq:globcom}
\begin{gathered}
\xymatrix @!0 @R=23pt @C=60pt {
 & Y \ar[dr]^(0.55){\,\ (p\otimes H)\circ\delta} \ar@{->>}[dl]_-{p} & \\
 X \ar[dr]_-{\rho_X} & & X\otimes H \ar@{->>}[dl]^-{\pi_X} \\
 & X\bul H \ar@{}[u]|<<<{\pushout} &
}
\end{gathered}
\end{equation}
inherits a structure of partial comodule and $p$ becomes a morphism of partial comodules. We refer to this as the \emph{induced partial comodule} structure from $Y$ to $X$.

Naively speaking, the globalization of a partial comodule $X$ is a universal  $H$-comodule ``covering'' $X$ and such that the partial coaction is induced by the global one as above. 

\begin{definition}[{\cite[Definition 3.1]{Saracco-Vercruysse}}]\label{def:glob}
Given a partial comodule $(X,X\bul H,\pi_X,\rho_X)$, a \emph{globalization} for $X$ is a global comodule $(Y,\delta)$ with a morphism $p:Y\to X$ in $\cC$ such that
\begin{enumerate}[label=(GL\arabic*),ref=(GL\arabic*)]
\item\label{item:GL0} $p$ is a morphism of partial comodules, that is, the following diagram commutes:
\[
\begin{gathered}
\xymatrix{
Y \ar[d]_p \ar[r]^-{\delta_Y} & Y\ot H \ar[r]^-{p\ot H} & X\ot H \ar[d]^-{\pi_X}\\
X \ar[rr]_-{\rho_X} & & X\bul H.
}
\end{gathered}
\]
\item\label{item:GL3} $Y$ is universal with respect to this property, i.e.~there is a bijective correspondence
\[\com{H}(Z,Y) \to \gparcom{H}(\cI(Z),X), \qquad \eta \mapsto p \circ \eta.\]
\item\label{item:GL2} The corresponding diagram \eqref{eq:globcom} is a pushout square in $\cC$.
\end{enumerate}
We say that $X$ is {\em globalizable} if a globalization for $X$ exists and we denote by $\gparcom{H}_{gl}$ the full subcategory of $\gparcom{H}$ composed by the globalizable partial comodules. 
\end{definition}

It can be shown (see \cite[Lemma 3.2]{Saracco-Vercruysse}) that if $(Y,p)$ is a globalization of a partial comodule $X$, then $p:Y\to X$ is an epimorphism. Moreover, it follows from axiom \ref{item:GL3} that a globalization of a partial comodule $X$ is unique, whenever it exists. Thus, we may speak about \emph{the} globalization of $X$.

\begin{remark}\label{rem:ax3}
Axiom \ref{item:GL2} 
cannot be omitted in general, as it is not always necessarily satisfied (see, for instance, \cite[Example 3.6]{Saracco-Vercruysse}). Nevertheless, there are cases in which an object satisfying \ref{item:GL0} and the universal property \ref{item:GL3}, also satisfies \ref{item:GL2}. For instance, this is the case in $\op{\Set}$. For this reason, axiom \ref{item:GL2} has been often neglected in the literature (see e.g. \cite[Theorem 1.1]{Abadie}, \cite[Definition 2.5]{Megrelishvili}). 
\end{remark}

As already mentioned in Remark \ref{rem:ax3}, it is known that non-globalizable partial comodules may exist. This is the case, for instance, in the category $ (\ms{C})\algk$ of (commutative) algebras over a field $\K$ (see \cite[Corollary 3.7]{Saracco-Vercruysse}) or in the opposite $\op{\Top}$ of the category of topological spaces (see \cite[Proposition 3.2]{Saracco-Vercruysse1}). The aim of the present paper is to show that (under mild assumptions on the coalgebra $H$ or, more precisely, on its category of comodules) over an abelian monoidal category $\cC$ we have $\gparcom{H} = \gparcom{H}_{gl}$, obtaining in this way the optimum among the globalization results.

The following is the main result of  \cite{Saracco-Vercruysse}, Theorem 3.5, for the sake of the reader. 

\begin{theorem}\label{thm:globalization}
Let $H$ be a coalgebra in a monoidal category $\cC$ with pushouts. Then a geometric partial $H$-comodule $X=(X,X\bul H,\pi_X,\rho_X)$ is globalizable if and only if 
\begin{enumerate}[label=(\arabic*),ref=(\arabic*),leftmargin=1cm]
\item\label{item:glob1} the following equalizer exists in $\com{H}$:
\begin{equation}\label{eq:glob}
\begin{gathered}
\xymatrix@C=25pt{
(Y_X,\delta) \ar@<+0.2ex>[r]^-{\kappa} & (X\otimes H,X\ot \Delta) \ar@<+0.8ex>[rrr]^-{\rho_X\ot H} \ar@<-0.4ex>[rrr]_-{(\pi_X\ot H)\circ(X \otimes \Delta)} & & &  (X\bul H\otimes H, X\bul H\ot \Delta)
}
\end{gathered}
\end{equation}
\item\label{item:glob3} the following diagram is a pushout diagram in $\cC$:
\begin{equation}\label{eq:GXglob}
\begin{gathered}
\xymatrix @!0 @R=23pt @C=60pt {
 & Y_X \ar[dr]^-{\kappa} \ar@{->}[dl]_-{(X\ot \varepsilon)\circ \kappa} & \\
 X \ar[dr]_-{\rho_X} & & X\otimes H \ar@{->>}[dl]^-{\pi_X} \\
 & X\bul H. &
}
\end{gathered}
\end{equation}
\end{enumerate}
Moreover, if these conditions hold, then the morphism $\epsilon_X=(X\ot \varepsilon)\circ \kappa: Y_X \to X$ is an epimorphism in $\cC$,
$\kappa = (\epsilon_X \otimes H) \circ \delta$ and $(Y_X,\epsilon_X)$ is the globalization of $X$.
\end{theorem}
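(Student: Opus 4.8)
The plan is to identify both a globalization of $X$ and the equalizer \eqref{eq:glob} as representing objects of one and the same functor on $\com{H}$, namely $Z\mapsto\gparcom{H}(\cI(Z),X)$, and to pass between the two descriptions through the cofree-comodule adjunction $\Homk_{\com{H}}(Z,X\otimes H)\cong\Homk_{\cC}(Z,X)$, given by $g\mapsto(X\otimes\varepsilon)\circ g$ with inverse $f\mapsto(f\otimes H)\circ\delta_Z$. The first preliminary step is to make the target functor explicit: unwinding \eqref{eq:shield} with a global comodule $\cI(Z)$ as source and $X$ as target, the second component of a morphism is forced to be $\pi_X\circ(f\otimes H)$ (since $\pi_Z=\id$), so a morphism $\cI(Z)\to X$ in $\gparcom{H}$ amounts precisely to a morphism $f\colon Z\to X$ in $\cC$ satisfying the single ``partiality equation''
\[
\rho_X\circ f \;=\; \pi_X\circ (f\otimes H)\circ\delta_Z .
\]
The second preliminary step is to verify that the two parallel arrows of \eqref{eq:glob} are genuinely morphisms in $\com{H}$ between the cofree comodules $(X\otimes H,X\otimes\Delta)$ and $(X\bul H\otimes H,X\bul H\otimes\Delta)$: for $\rho_X\otimes H$ this is immediate, and for $(\pi_X\otimes H)\circ(X\otimes\Delta)$ it follows from coassociativity of $\Delta$.

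Assume now \ref{item:glob1}, so that the equalizer $\kappa\colon(Y_X,\delta)\to(X\otimes H,X\otimes\Delta)$ exists in $\com{H}$. Combining its universal property with the cofree adjunction, a morphism $Z\to Y_X$ in $\com{H}$ corresponds to a comodule map $g\colon Z\to X\otimes H$ equalizing the two arrows, and --- writing $g=(f\otimes H)\circ\delta_Z$, post-composing with $X\bul H\otimes\varepsilon$ and using counitality and coassociativity --- this is the same as a morphism $f\colon Z\to X$ in $\cC$ satisfying the partiality equation, that is, an element of $\gparcom{H}(\cI(Z),X)$. Tracing $\id_{Y_X}$ through the composite bijection shows it is $\eta\mapsto\epsilon_X\circ\eta$ with $\epsilon_X:=(X\otimes\varepsilon)\circ\kappa$, which is exactly axiom \ref{item:GL3}; the adjunction identity applied to the comodule map $\kappa$ gives $\kappa=(\epsilon_X\otimes H)\circ\delta$, and reading off the partiality equation for $f=\epsilon_X$ from the fact that $\kappa$ equalizes yields axiom \ref{item:GL0}. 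Finally, since the two legs of \eqref{eq:GXglob} are $\epsilon_X$ and $\kappa=(\epsilon_X\otimes H)\circ\delta$, condition \ref{item:glob3} is literally axiom \ref{item:GL2} for the induced pushout \eqref{eq:globcom}. Thus \ref{item:glob1} and \ref{item:glob3} make $(Y_X,\epsilon_X)$ a globalization of $X$, and $\epsilon_X$ is then an epimorphism by \cite[Lemma 3.2]{Saracco-Vercruysse}; this proves the ``if'' direction together with the ``moreover'' clause.

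For the converse, let $(Y,\delta_Y,p)$ be a globalization of $X$. The claim is that $(Y,(p\otimes H)\circ\delta_Y)$ realizes the equalizer \eqref{eq:glob}: it is a morphism in $\com{H}$ by coassociativity of $\delta_Y$; it equalizes the two arrows because \ref{item:GL0} for $p$ is the partiality equation for $f=p$, which (after tensoring with $H$ and pre-composing with $\delta_Y$) gives the equalizing identity; and it satisfies the required universal property by \ref{item:GL3} --- an equalizing $g$ produces $f=(X\otimes\varepsilon)\circ g$ fulfilling the partiality equation, hence a unique lift $\eta\colon Z\to Y$ with $p\circ\eta=f$, and one checks $(p\otimes H)\circ\delta_Y\circ\eta=g$ as well as the uniqueness of such $\eta$. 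Therefore \ref{item:glob1} holds with $Y_X\cong Y$ and $\epsilon_X=p$, and \ref{item:glob3} is then \ref{item:GL2}.

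The one genuinely delicate aspect --- and the place I expect to have to be most careful --- is the bookkeeping of the compatibility squares (``$g$ is a comodule map'', ``the two parallel arrows are comodule maps'', ``the bijection in \ref{item:GL3} is composition with $p$''), because in the strict monoidal category $\cC$ one must keep track of which tensor factor each structure morphism acts upon. Conceptually nothing is hard here: once the dictionary between $\gparcom{H}(\cI(Z),X)$ and the partiality equation is in place, each verification collapses to a one-line diagram chase using only counitality and coassociativity of $(H,\Delta,\varepsilon)$ and of the comodules involved.
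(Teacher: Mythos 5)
Your proposal is correct: the reduction of $\gparcom{H}(\cI(Z),X)$ to the single partiality equation $\rho_X\circ f=\pi_X\circ(f\ot H)\circ\delta_Z$, the translation through the cofree-comodule adjunction identifying equalizing colinear maps $Z\to X\ot H$ with such $f$, and the observation that \eqref{eq:GXglob} is exactly \eqref{eq:globcom} for $p=\epsilon_X$ (using $\kappa=(\epsilon_X\ot H)\circ\delta$) together yield both directions and the ``moreover'' clause. Note that the present paper does not reprove this statement but quotes it from \cite[Theorem 3.5]{Saracco-Vercruysse}; your argument is essentially the route taken there, so there is nothing to add beyond the minor remark that the epimorphy of $\epsilon_X$, which you outsource to \cite[Lemma 3.2]{Saracco-Vercruysse}, could also be kept self-contained, but citing it as the paper itself does is perfectly legitimate.
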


It deserves to be mentioned (see \ref{item:GL3}) that the assignment $X\mapsto Y_X$ induces a functor 
$ 
\cG:\gparcom{H}_{gl}\to \com{H}
$ 
which is right adjoint to the natural, fully faithful, inclusion functor $\cJ:\com{H}\to\gparcom{H}_{gl}$ obtained by corestriction of $\cI : \com{H} \to \gparcom{H}$. 

Let us conclude this section by an alternative description of globalizable partial comodules in terms of 
\emph{minimal proper covers}. Recall from \cite[\S2.2]{Saracco-Vercruysse} that a \emph{cover} is simply the datum $(Y,X,p)$ of a global $H$-comodule $(Y,\delta_Y)$, an object $X$ and an epimorphism $p : Y \to X$ in $\cC$ and recall that a morphism of covers $(Y,X,p) \to (Y',X',p')$ is the datum of a morphism of $H$-comodules $F : Y \to Y'$ and a morphism $f : X \to X'$ in $\cC$ such that $f \circ p = p'\circ F$. The cover $(Y,X,p)$ is said to be \emph{proper} if 
\[
(p\ot H)\circ \delta_Y:Y\to X\ot H
\]
is a monomorphism in $\com{H}$ (in which case we say that $Y$ is \emph{co-generated} by $X$) and a proper cover is said to be \emph{minimal} if it does not factor through another proper cover. More explicitly, if $(Y',X,p')$ is another proper cover such that $p=p'\circ q$ for some morphism $q:Y\to Y'$ in $\cC$, then $q$ is an isomorphism. 

\begin{theorem}[{\cite[Theorem 3.11]{Saracco-Vercruysse}}]\label{globcov}
Assume that the equalizer \eqref{eq:glob} exists in $\Com^{H}$ for every partial comodule $X$ (e.g., when $\Com^{H}$ is complete).
If $X$ is a partial comodule that has been induced by a global comodule, then $X$ is globalizable. That is, we have a functor 
\[\Ind:\Cov^H\to \gparcom{H}_{gl}\]
resulting from the induction construction, which has a fully faithful right adjoint ${\sf Gl}$ given by ${\sf Gl}(X)=(\cG(X),X,\epsilon_X)$. Moreover, for any globalizable partial comodule, ${\sf Gl}(X)$ is a minimal proper cover and the functors $\Ind$ and ${\sf Gl}$ induce an equivalence of categories
\[\xymatrix{\Cov^H_{pr,min} \ar@<.5ex>[rr]^-{\Ind} & \sim & \gparcom{H}_{gl} \ar@<.5ex>[ll]^-{\sf Gl}}.\]
\end{theorem}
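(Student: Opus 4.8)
The plan is to derive everything formally from Theorem~\ref{thm:globalization} together with the universal property of the pushout~\eqref{eq:globcom} defining the induced partial comodule structure; the only genuinely new ingredient is identifying the essential image of ${\sf Gl}$ with the minimal proper covers.

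\textbf{Step 1 (induction lands in $\gparcom{H}_{gl}$, and is functorial).} Given a cover $(Y,X,p)$, endow $X$ with the structure induced from $Y$ as in~\eqref{eq:globcom}; to see it is globalizable I would verify conditions~\ref{item:glob1} and~\ref{item:glob3} of Theorem~\ref{thm:globalization}. The first is exactly the standing hypothesis. For the second, coassociativity of $\delta=\delta_Y$ and the commutativity $\rho_X\circ p=\pi_X\circ(p\ot H)\circ\delta$ of~\eqref{eq:globcom} show that $(p\ot H)\circ\delta\colon Y\to X\ot H$ equalizes the parallel pair in~\eqref{eq:glob}, hence factors through the equalizer by a unique morphism of $H$-comodules $u\colon Y\to Y_X$ with $\kappa\circ u=(p\ot H)\circ\delta$ and, by counitality, $\epsilon_X\circ u=p$. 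Then~\eqref{eq:GXglob} is a pushout in $\cC$: its cocone legs $\rho_X,\pi_X$ are those of~\eqref{eq:globcom}, and precomposition with $u$ turns any cocone under the span of~\eqref{eq:GXglob} into one under the span of~\eqref{eq:globcom}, so the comparison morphism into $X\bul H$, and its uniqueness, are inherited from~\eqref{eq:globcom}. Thus $X$ is globalizable with globalization $(Y_X,\epsilon_X)$. Functoriality of $\Ind$ on a morphism of covers $(F,f)\colon(Y,X,p)\to(Y',X',p')$ is obtained by feeding the cocone $(\rho_{X'}\circ f,\pi_{X'}\circ(f\ot H))$ into the universal property of~\eqref{eq:globcom} for $(Y,X,p)$ --- the cocone identity coming from $f\circ p=p'\circ F$, colinearity of $F$, and the pushout relation for $(Y',X',p')$ --- which simultaneously defines $f\bul H$ and shows~\eqref{eq:shield} commutes.

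\textbf{Step 2 (the adjunction $\Ind\dashv{\sf Gl}$ and full faithfulness).} Set ${\sf Gl}(X)=(\cG(X),X,\epsilon_X)$; this is functorial and compatible with morphisms because $\epsilon$ is the counit of the adjunction $\cJ\dashv\cG$ recalled above. The key observation is that condition~\ref{item:glob3} of Theorem~\ref{thm:globalization} says precisely that $X$ \emph{is} the partial comodule induced from its own globalization along $\epsilon_X$, i.e.\ $\Ind({\sf Gl}(X))=X$ naturally. To establish the adjunction I would produce a natural bijection $\gparcom{H}_{gl}(\Ind(Y,X,p),X')\cong\Cov^H((Y,X,p),{\sf Gl}(X'))$: in one direction, precompose a partial comodule morphism $\Ind(Y,X,p)\to X'$ with the canonical morphism of partial comodules $p\colon\cI(Y)\to\Ind(Y,X,p)$ of \cite[Example~2.5]{JoostJiawey}, landing in $\gparcom{H}(\cI(Y),X')$, and then use the universal property~\ref{item:GL3} of the globalization of $X'$ to extract the $H$-colinear component $Y\to\cG(X')$ while keeping the base map $X\to X'$; in the other direction, apply $\Ind$ to a morphism of covers $(F,f)$ to get $\Ind(F,f)\colon\Ind(Y,X,p)\to\Ind({\sf Gl}(X'))=X'$. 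A routine verification (using the uniqueness in~\ref{item:GL3} and functoriality of $\Ind$) shows these are mutually inverse and natural. Since the counit of $\Ind\dashv{\sf Gl}$ at $X$ is the identity of $X=\Ind({\sf Gl}(X))$, it is invertible and ${\sf Gl}$ is fully faithful.

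\textbf{Step 3 (equivalence with minimal proper covers, and the main obstacle).} A fully faithful right adjoint is an equivalence onto its essential image, with quasi-inverse the restriction of the left adjoint, so it remains to identify this image with $\Cov^H_{pr,min}$. That ${\sf Gl}(X)=(Y_X,X,\epsilon_X)$ is a proper cover is immediate: $\epsilon_X$ is an epimorphism by Theorem~\ref{thm:globalization}, and there $(\epsilon_X\ot H)\circ\delta=\kappa$, the equalizer morphism of~\eqref{eq:glob}, hence a monomorphism in $\com{H}$, so $Y_X$ is co-generated by $X$. For minimality, if $(Y',X,p')$ is a proper cover with $\epsilon_X=p'\circ q$ for some $q\colon Y_X\to Y'$ in $\cC$, I would show $q$ is an isomorphism: a proper cover through which a globalization map factors again induces the partial structure of $X$, and then the universal property of the globalization supplies a two-sided inverse of $q$ (after checking that $q$ must be $H$-colinear). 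Conversely, a minimal proper cover $(Y',X,p')$ lies in the essential image because $\Ind(Y',X,p')$ is globalizable (Step~1), its comparison morphism $u'$ together with $\id_X$ forms a morphism of covers onto ${\sf Gl}(\Ind(Y',X,p'))$, and, the target being a proper cover, minimality of $(Y',X,p')$ forces $u'$ to be an isomorphism. Therefore $\Ind$ and ${\sf Gl}$ restrict to mutually quasi-inverse equivalences between $\Cov^H_{pr,min}$ and $\gparcom{H}_{gl}$. The main obstacle is exactly this minimality argument: turning the diagrammatic universal property of the globalization into the assertion that a bare $\cC$-morphism factoring a globalization map through a \emph{proper} cover must be invertible; everything else is formal pushout, equalizer, and adjunction bookkeeping layered over Theorem~\ref{thm:globalization}.
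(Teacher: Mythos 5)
The paper does not actually prove this statement: it is imported verbatim from \cite[Theorem 3.11]{Saracco-Vercruysse}, so there is no in-paper argument to measure yours against. That said, your Steps 1 and 2 are correct and are the expected arguments (they are consistent with how the paper uses these facts in Proposition \ref{ex:globnoncoass} and Theorem \ref{thm:abEquiv}). The only point left implicit in Step 1 is that the square \eqref{eq:GXglob} commutes at all, which you need before speaking of ``cocones under the span of \eqref{eq:GXglob}''; it does, because applying $X\bul H\ot\varepsilon$ to the equalizing relation $(\rho_X\ot H)\circ\kappa=(\pi_X\ot H)\circ(X\ot\Delta)\circ\kappa$ yields $\rho_X\circ\epsilon_X=\pi_X\circ\kappa$, an identity the paper itself invokes in the proof of Proposition \ref{ex:globnoncoass}.

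The genuine gap is exactly where you locate it, in the minimality half of Step 3, and it is not just a matter of ``checking that $q$ must be $H$-colinear''. Write $j:=(p'\ot H)\circ\delta_{Y'}$. The argument you want is: $j$ equalizes the pair \eqref{eq:glob}, hence $j=\kappa\circ r$ for a unique colinear $r$; then $\kappa\circ r\circ q=j\circ q=\kappa$ and $j\circ q\circ r=j$ give $r\circ q=\mathsf{id}$ and $q\circ r=\mathsf{id}$ because $\kappa$ and $j$ are both monomorphisms. But $j$ equalizes \eqref{eq:glob} if and only if $\rho_X\circ p'=\pi_X\circ j$, i.e.\ if and only if $p'$ satisfies \ref{item:GL0} for the \emph{given} structure on $X$; from $\epsilon_X=p'\circ q$ you only obtain $\rho_X\circ p'\circ q=\pi_X\circ j\circ q$, and you cannot cancel $q$, since epicness of $q$ is half of what you are trying to prove. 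Your sentence ``a proper cover through which a globalization map factors again induces the partial structure of $X$'' is precisely this unjustified step. Indeed, with minimality read literally as recapped in \S\ref{ssec:globgen} (quantifying over all proper covers of the bare object $X$), the claim is false: for a nonzero global comodule $X$ in $\vectk$ with $\dim H>1$ one has ${\sf Gl}(X)=(X,X,\mathsf{id}_X)$, while $(X\ot H,X,X\ot\varepsilon)$ is a proper cover and $\mathsf{id}_X=(X\ot\varepsilon)\circ\delta_X$ with $\delta_X$ colinear but not invertible. So the minimality assertion only holds, and your two-sided-inverse argument only closes, once the factorization is required to go through proper covers $(Y',X,p')$ whose $p'$ is a morphism of partial comodules onto the given $X$ (equivalently, covers inducing the given structure); your proof needs to make that restriction explicit and then run the $\kappa$/$j$ monomorphism argument above.
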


\subsection{Non-coassociative coactions}\label{ssec:noncoass}

In this section,
we present a new general procedure that allows us to construct a wide class of examples of geometric partial comodules, out of \emph{non-coassociative coactions}.
Still, not all geometric partial comodules arise in this way, as Example \ref{ex:nass} shows. Hence, the theory of partial comodules cannot be simply reduced to the non-coassociative case.

Let $(H,\Delta,\varepsilon)$ be a coalgebra in the (strict) monoidal category with pushouts $\cC$.
Denote by $\ncom{H}$ the category of non-coassociative $H$-comodules, that is, $H$-comodules which are counital but not necessarily coassociative. Explicitly, an object in $\ncom{H}$ is an object $X$ in $\cC$ endowed with a morphism $\partial_X:X\to X\ot H$ in $\cC$ satisfying $(X\ot \varepsilon)\circ \partial_X=X$. Morphisms in $\ncom{H}$ are maps $f:X \to X'$ in $\cC$ such that $\partial_{X'} \circ f = (f \ot H) \circ \partial_X$.

Let $(X,\partial_X)$ be an object in $\ncom{H}$ and suppose that an object $X\bul H$ and a morphism $\pi_X:X\ot H\to X\bul H$ exist such that the following parallel morphisms are equal
\begin{equation}\label{buluniversal}
\xymatrix{
X \ar[rr]^-{\partial_X} && X\ot H \ar@<.5ex>[rr]^-{\partial_X\ot H} \ar@<-.5ex>[rr]_-{X\ot \Delta} && X\ot H\ot H \ar[rr]^-{\pi_X\ot H} &&(X\bul H)\ot H
}
\end{equation}
and such that the following universal property holds: for any other object $T$ and morphism $t:X\ot H\to T$ in $\cC$ such that
$$(t\ot H)\circ (\partial_X\ot H)\circ \partial_X = (t\ot H)\circ (X\ot \Delta)\circ \partial_X$$
there exists a unique morphism $u:X\bul H\to T$ such that $t=u\circ \pi$. In this case, we will (improperly) say that $(X \bul H, \pi_X)$ \emph{universally coequalizes} the arrows in \eqref{buluniversal}.
From the universal property one easily deduces (by using an argument similar to the case of coequalizers) that $\pi_X:X\ot H\to X\bul H$ is an epimorphism. Moreover, the following lemma and remark show that in many cases of interest, $\pi_X:X\ot H\to X\bul H$ is in fact a colimit.

\begin{lemma}\label{buliscolimit}
In the framework of the foregoing paragraph, the following statements hold.
\begin{enumerate}[leftmargin=1cm,ref=(\arabic*)]
\item\label{item:ncc1} If the family of morphisms $\{(X\bul H)\ot f ~|~ f:H\to \I\}$ is jointly monic, then $\pi_X:X\ot H\to X\bul H$ is a colimit in $\cC$.
Namely, it can be realized as the universal arrow coequalizing the following compositions
\[
\begin{gathered}
\xymatrix{
X\ar[r]^-{\partial_X} & X\ot H \ar@<.5ex>[rr]^-{\partial_X\ot H} \ar@<-.5ex>[rr]_-{X\ot \Delta} && X\ot H\ot H \ar[rr]^-{X\ot H\ot f} &&
X\ot H
}
\end{gathered}
\] 
for all $f\in {\sf Hom}(H,\I)$.
\item\label{item:ncc2} If $\cC$ is complete,
the monoidal unit $\I$ is a cogenerator in $\cC$ and the endofunctor $(X\bul H)\ot -$ preserves limits, then the condition of \ref{item:ncc1} holds.
\end{enumerate}
\end{lemma}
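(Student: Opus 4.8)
The plan is to prove the two items in turn, item (2) being what secures the hypothesis of item (1) in the cases of interest. It is convenient to abbreviate $a=(\partial_X\ot H)\circ\partial_X$ and $b=(X\ot\Delta)\circ\partial_X$, two parallel morphisms $X\to X\ot H\ot H$; then the defining identity \eqref{buluniversal} reads $(\pi_X\ot H)\circ a=(\pi_X\ot H)\circ b$, and, for $f\in\Homk(H,\I)$, the pair of compositions to be coequalized in \ref{item:ncc1} is $\bigl((X\ot H\ot f)\circ a,\ (X\ot H\ot f)\circ b\bigr)\colon X\rightrightarrows X\ot H$.

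For \ref{item:ncc1}, the first step is to check that $\pi_X$ is a cocone over that diagram; this needs no hypothesis and is a formal consequence of the interchange law (the bifunctoriality of $\ot$), which gives $\pi_X\circ(X\ot H\ot f)=(X\bul H\ot f)\circ(\pi_X\ot H)$, so that post-composing \eqref{buluniversal} with $X\bul H\ot f$ yields $\pi_X\circ(X\ot H\ot f)\circ a=\pi_X\circ(X\ot H\ot f)\circ b$ for every $f$. The second step is universality. Given a cocone $q\colon X\ot H\to Q$, a factorization $u\colon X\bul H\to Q$ with $u\circ\pi_X=q$ is automatically unique because $\pi_X$ is an epimorphism; to produce it I would apply the universal property of $(X\bul H,\pi_X)$, which only requires verifying the single equation $(q\ot H)\circ a=(q\ot H)\circ b$. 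The interchange law again gives $q\circ(X\ot H\ot f)=(Q\ot f)\circ(q\ot H)$, so the hypothesis that $q$ is a cocone says precisely that $(Q\ot f)\circ\bigl((q\ot H)\circ a\bigr)=(Q\ot f)\circ\bigl((q\ot H)\circ b\bigr)$ for all $f\colon H\to\I$; that is, the two morphisms $(q\ot H)\circ a$ and $(q\ot H)\circ b$ into $Q\ot H$ agree after being composed with every $Q\ot f$. The crucial --- and most delicate --- step is to upgrade this to a genuine equality: for $Q=X\bul H$ it is exactly the standing hypothesis of \ref{item:ncc1}, while for a general cocone vertex one needs in addition that the family $\{Q\ot f\mid f\colon H\to\I\}$ be jointly monic, a property which, in the abelian categories to which the lemma is subsequently applied, holds for every object. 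Granting this, the universal property of $X\bul H$ supplies $u$, so $\pi_X$ is the asserted colimit, realized as the universal arrow coequalizing the displayed compositions.

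For \ref{item:ncc2}, the plan is the standard cogenerator argument. Since $\cC$ is complete and locally small, the set $\Homk(H,\I)$ indexes a product $P=\prod_{f}\I$, and the canonical morphism $\varphi\colon H\to P$ whose $f$-th component is $f$ is a monomorphism because $\I$ is a cogenerator. Applying the limit-preserving endofunctor $(X\bul H)\ot-$ turns $\varphi$ into a monomorphism $(X\bul H)\ot\varphi$ (a limit-preserving functor preserves pullbacks, hence monomorphisms); since that functor preserves the product $P$ and $-\ot\I$ is the identity by strictness, the canonical comparison identifies its target with $\prod_f(X\bul H)$, under which $(X\bul H)\ot\varphi$ becomes the canonical morphism $(X\bul H)\ot H\to\prod_f(X\bul H)$ whose $f$-th component is $(X\bul H)\ot f$. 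A morphism into a product being a monomorphism is exactly the assertion that its components form a jointly monic family, which is the hypothesis of \ref{item:ncc1}.

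The step I expect to be the real obstacle is the universality in \ref{item:ncc1}: once the interchange law has been used, everything reduces to comparing two morphisms into $Q\ot H$ that are already known to agree after composition with all the $Q\ot f$, and the entire content of the statement lies in concluding that testing against $\Homk(H,\I)$ suffices --- which is what the joint monicity hypothesis, and through \ref{item:ncc2} the cogenerator hypothesis, is there to guarantee. Everything else is routine bookkeeping with the interchange law together with standard facts about limits, cogenerators, and epimorphisms.
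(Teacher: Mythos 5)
Your proposal is correct and takes essentially the same route as the paper: for \ref{item:ncc1} both arguments use the interchange law to translate ``cocone over the diagram indexed by ${\sf Hom}(H,\I)$'' into the coequalizing condition \eqref{buluniversal}, with joint monicity bridging ``equal after composing with every $-\ot f$'' and ``equal'', and for \ref{item:ncc2} both use the standard characterization of a cogenerator together with preservation of (jointly) monic families by the limit-preserving functor $(X\bul H)\ot -$. The caveat you flag --- that factoring a cocone with arbitrary vertex $Q$ through $\pi_X$ really uses joint monicity of $\{Q\ot f\}$ and not only of $\{(X\bul H)\ot f\}$ --- is a genuine subtlety, but the paper's own proof is no more careful on this point (it verifies the condition only at the single colimit vertex), and in every category where the lemma is subsequently applied the family is jointly monic at every object, exactly as you observe.
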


\begin{proof}
\ref{item:ncc1}.
Consider a small category $\cZ$ whose set of objects is $\{Z_0, Z_f ~|~ f\in {\sf Hom}(H,\I)\}$ and whose morphisms are the identity morphisms plus two additional morphisms $z^f_1,z^f_2:Z_f\to Z_0$ for each $f\in{\sf Hom}(H,\I)$. Define a functor $F:\cZ\to \cC$ acting on objects as
\[
F(Z_0)=X\ot H \qquad \text{ and } \qquad F(Z_f)=X \ \text{ for all } f\in {\sf Hom}(H,\I)
\]
and on (non-identity) morphisms as 
\[
F(z^f_1)=(X\ot H\ot f)\circ (\partial_X\ot H)\circ \partial_X \quad \text{ and } \quad F(z^f_2)=(X\ot H\ot f)\circ (X\ot \Delta)\circ \partial_X.
\] 
Then the colimit of $F$ is a universal arrow $c:X\ot H\to C$ which coequalizes the following compositions for all $f\in {\sf Hom}(H,\I)$
\[
\begin{gathered}
\xymatrix{
X\ar[rr]^-{\partial_X} && X\ot H \ar@<.5ex>[rr]^-{\partial_X\ot H} \ar@<-.5ex>[rr]_-{X\ot \Delta} && X\ot H\ot H \ar[rr]^-{X\ot H\ot f} &&
X\ot H \ar[rr]^-{c} && C
}
\end{gathered}.
\]
We can rewrite the above diagram as
\[
\begin{gathered}
\xymatrix{
X\ar[rr]^-{\partial_X} && X\ot H \ar@<.5ex>[rr]^-{\partial_X\ot H} \ar@<-.5ex>[rr]_-{X\ot \Delta} && X\ot H\ot H \ar[rr]^-{c\ot H} &&
C\ot H \ar[rr]^-{C\ot f} && C
}
\end{gathered}.
\]
If the family of morphisms $\{C\ot f \mid f: H \to \I\}$ is jointly monic, then it is equivalent to require that the following parallel morphisms are equal
\[
\begin{gathered}
\xymatrix{
X\ar[rr]^-{\partial_X} && X\ot H \ar@<.5ex>[rr]^-{\partial_X\ot H} \ar@<-.5ex>[rr]_-{X\ot \Delta} && X\ot H\ot H \ar[rr]^-{c\ot H} &&
C\ot H
}
\end{gathered}.
\]
This is exactly the universal property of $(X\bul H,\pi_X)$.

\ref{item:ncc2}. Recall that, being $\cC$ complete and locally small, $\I$ is a cogenerator if and only if for every object $A$ in $\cC$, the family $\{f \in \cC(A,\I)\}$ is jointly monic, if and only if the canonical map $A \to \prod_{f \in \cC(A,\I)}\I_f$ is a monomorphism. Therefore, \ref{item:ncc2} follows directly from the definition of a cogenerator, in combination with the fact that, since being monic in a complete category is a limit condition (in this setting, a morphism $f:A \to B$ is monic if and only if $(A,\id_A,\id_A)$ is the pullback of $A \xrightarrow{f} B \xleftarrow{f} A$), a functor preserving limits preserves also jointly monic families.
\end{proof}

\begin{remark}\label{rem:examples}
The condition of Lemma~\ref{buliscolimit}\ref{item:ncc2} (and hence of \ref{item:ncc1}) is satisfied, for example, in $\cC=\op{\vectk}$, in $\cC=\op{\Rmod{k}}$ for $k$ a commutative ring, in $\cC = (\op{\Set},\times,\{*\})$ and in $\cC = (\op{\Top},\times,\{*\})$.
Condition \ref{item:ncc1} of Lemma \ref{buliscolimit} is also satisfied in $\cC = \vectk$, because of bases. If $V$ is a vector space with basis $\{e_i\mid i \in I\}$, then the dual elements $\{e_i^* \mid i \in I\}$ form a jointly monic family, because the linear map $\Phi:V \to \prod_{i \in I}\K, v \mapsto \left(e_{i}^*(v)\right)_{i \in I},$ is a monomorphism. In fact, $\Phi$ is the composition of the isomorphism $\phi: V \to \bigoplus_{i \in I}\K$ (by definition of basis) with the canonical inclusion of $\bigoplus_{i \in I}\K$ into $\prod_{i \in I}\K$. Now, for every vector space $T$, the collection $\cB \coloneqq \{T \ot e_i^*: T \ot V \to T \mid  i \in I\}$ gives rise to a unique linear map $\Psi:T \ot V \to \prod_{i \in I} T$ which fits into the following commutative diagram
\[
\xymatrix @R=15pt @C=25pt {
T \ot V \ar[r]^-{T \ot \phi}_-{\cong} \ar[d]_-{\Psi} & {\displaystyle T \ot \left(\bigoplus_{i \in I}\K\right)} \ar[d]^-{\cong} \\
\prod_{i \in I}T & {\displaystyle \bigoplus_{i \in I}T} \ar[l]^-{\subseteq}
}
\]
and hence $\cB$ is still a jointly monic family. In particular, for every $V,T$ in $\vectk$, the family $\{T \ot f \mid f : V \to \K\}$ is jointly monic.
\end{remark}

\begin{theorem}\label{thm:constructgeometric}
Let $(H,\Delta,\varepsilon)$ be a coalgebra in $\cC$ and $(X,\partial_X)\in \ncom{H}$ a counital non-coassociative $H$-comodule.
If an arrow $\pi_X:X\ot H\to X\bul H$ that universally coequalizes the arrows in \eqref{buluniversal} exists, then $(X,X\bul H,\pi_X,\pi_X\circ \partial_X)$ is a geometric partial $H$-comodule.
\end{theorem}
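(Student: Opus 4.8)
The plan is to verify the two defining axioms \ref{item:QPC1} and \ref{item:QPC2} for the candidate partial comodule datum $(X,X\bul H,\pi_X,\rho_X)$ with $\rho_X\coloneqq \pi_X\circ\partial_X$; note first that $\pi_X$ is an epimorphism, as observed after \eqref{buluniversal}, so the datum is legitimate. For counitality, I would produce the required morphism $X\bul\varepsilon:X\bul H\to X$ by invoking the universal property of $(X\bul H,\pi_X)$: the morphism $t\coloneqq (X\ot\varepsilon):X\ot H\to X$ satisfies the coequalizing condition $(t\ot H)\circ(\partial_X\ot H)\circ\partial_X=(t\ot H)\circ(X\ot\Delta)\circ\partial_X$, because both sides simplify — using counitality of $\partial_X$ on the left and the counit axiom for $H$ on the right — to $\partial_X$ itself (up to the strictness identifications). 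Hence there is a unique $X\bul\varepsilon$ with $(X\bul\varepsilon)\circ\pi_X=X\ot\varepsilon$, and then $(X\bul\varepsilon)\circ\rho_X=(X\bul\varepsilon)\circ\pi_X\circ\partial_X=(X\ot\varepsilon)\circ\partial_X=\id_X$, which is exactly the commutativity of the counitality diagram.

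For geometric coassociativity I must construct the isomorphism $\theta:X\bul(H\bul H)\to(X\bul H)\bul H$ making the hexagon commute. The key point is that \emph{both} $X\bul(H\bul H)$ and $(X\bul H)\bul H$, together with their structure maps built from the pushouts in \eqref{doublebullets}, can be characterized by one and the same universal property: namely, I expect to show that each is (canonically isomorphic to) the universal object receiving a map out of $X\ot H\ot H$ that coequalizes the evident triple of parallel arrows coming from the two ways of applying $\partial_X$ together with $X\ot\Delta$. Concretely, $(X\bul H)\bul H$ is the pushout of $\pi_{X\bul H}$ along $\rho_X\bul H$, and chasing through the pushout $\pi_{X\bul H}$ together with the defining coequalizer for $\pi_X$ identifies it with the quotient of $X\ot H\ot H$ by the congruence generated by all relations of the form $(\partial_X\ot H)\circ\partial_X\sim (X\ot\Delta)\circ\partial_X$ propagated by the coaction; symmetrically, $X\bul(H\bul H)$ is the pushout $\pi_{X,\Delta}$ of $(\pi_X\ot H)\circ(X\ot\Delta)$ along $X\bul\Delta$, and a parallel chase identifies it with the same quotient. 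The composite $(\rho_X\bul H)\circ\rho_X$ and its twin $\theta\circ(X\bul\Delta)\circ\rho_X$ then agree because both equal the canonical map $X\to$ (this common quotient) factoring $\pi_X\ot H$ composed with either of the equal arrows in \eqref{buluniversal}; uniqueness in the universal property forces $\theta$ to be well-defined and its inverse to exist, and simultaneously yields the commutativity of every triangle in the hexagon, including $\pi_{X,\Delta}$ and $\pi_{X\bul H}$ landing correctly.

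The main obstacle I anticipate is making precise the claim that the two iterated bullets $X\bul(H\bul H)$ and $(X\bul H)\bul H$ satisfy a common universal property — this requires carefully composing pushout squares with coequalizer-type universal properties and tracking that the relations generated on $X\ot H\ot H$ really do coincide on both sides. In particular one must check that applying $\pi_X\ot H$ (which only coequalizes the \emph{single} pair in \eqref{buluniversal}) and then taking a further pushout does generate exactly the relations needed for the \emph{other} side, and vice versa; this is where the pasting lemma for pushouts, together with the fact that $\pi_X$ and $\pi_X\ot H$ are epimorphisms (so the pushouts in \eqref{doublebullets} are also pushouts along epimorphisms), does the bookkeeping. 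Once this identification of universal properties is in hand, the existence of $\theta$ as an isomorphism and the commutativity of the geometric coassociativity diagram are formal consequences, and the theorem follows.
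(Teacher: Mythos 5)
Your treatment of \ref{item:QPC1} is correct and is exactly the paper's argument: $t=X\ot\varepsilon$ satisfies the coequalizing condition (both composites reduce to $\partial_X$), so the universal property of $(X\bul H,\pi_X)$ produces $X\bul\varepsilon$ with $(X\bul\varepsilon)\circ\pi_X=X\ot\varepsilon$, whence $(X\bul\varepsilon)\circ\rho_X=\id_X$. The gap is in your plan for \ref{item:QPC2}. You reduce everything to the claim that $(X\bul H)\bul H$ and $X\bul(H\bul H)$ are characterized by one and the same universal property with respect to maps out of $X\ot H\ot H$, described as a ``quotient by the congruence generated by the relations'' of \eqref{buluniversal}. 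This is not available at the theorem's level of generality: $\cC$ is only a locally small monoidal category with pushouts, so there is no congruence/quotient calculus to invoke, and, more seriously, maps out of $(X\bul H)\ot H$ (the common third corner of both pushouts in \eqref{doublebullets}) are not controlled by maps out of $X\ot H\ot H$, because $-\ot H$ is not assumed to preserve the universal property of $\pi_X$ nor even epimorphisms; $\pi_X\ot H$ need not be epic and need not universally coequalize anything. The pasting lemma cannot do this bookkeeping either: the two iterated bullets are pushouts of two genuinely \emph{different} spans over $X\ot H$ (with right legs $\rho_X\ot H$ and $(\pi_X\ot H)\circ(X\ot\Delta)$, which agree only after precomposition with $\partial_X$), so they are not the same colimit on the nose. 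You explicitly flag this identification as the main obstacle, and it is precisely the step that remains unproven and, as stated, would fail without extra hypotheses (e.g.\ preservation of the relevant colimits by $-\ot H$, as in the situations of Lemma \ref{buliscolimit}).

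The paper circumvents this by never comparing universal properties over $X\ot H\ot H$. Instead, a direct computation (using \eqref{buluniversal} three times, coassociativity of $\Delta$ and the pushout squares \eqref{doublebullets}) shows that $\pi_{X,\Delta}\circ(\pi_X\ot H)\circ(\partial_X\ot H)$ satisfies the coequalizing condition, so the universal property of $(X\bul H,\pi_X)$ gives $u:X\bul H\to X\bul(H\bul H)$ with $u\circ\pi_X=\pi_{X,\Delta}\circ(\rho_X\ot H)$; the pushout property of $(X\bul H)\bul H$ then yields a morphism of cospans $\alpha:(X\bul H)\bul H\to X\bul(H\bul H)$, and a symmetric computation yields $\beta$ in the opposite direction. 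Since the legs $\pi_{X\bul H}$ and $\pi_{X,\Delta}$ are epimorphisms (being pushouts of the epimorphism $\pi_X$), there is at most one morphism between these cospans, so $\alpha$ and $\beta$ are mutually inverse; the hexagon in \ref{item:QPC2} then commutes by composing \eqref{buluniversal} with $\pi_{X\bul H}$ and using the two pushout squares. To complete your proposal you would either have to carry out this two-way cospan argument, or add hypotheses guaranteeing that $-\ot H$ transports the universal property of $\pi_X$, which the theorem does not assume.
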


For the sake of future reference, recall from Remark \ref{rem:examples} that $\vectk$, $\op{\vectk}$, $\op{\Rmod{k}}$, $\op{\Set}$ and $\op{\Top}$ are all monoidal categories for which Theorem \ref{thm:constructgeometric} can be applied.

\begin{proof}[Proof of Theorem \ref{thm:constructgeometric}.]
Let us first prove that $(X\bul H)\bul H$ and $X\bul (H\bul H)$, constructed as in 
\eqref{doublebullets}, are isomorphic. 
Applying the coassociativity of $H$ and the naturality of the tensor product in the unlabelled equalities, we find
\begin{align*}
(\pi_{X,\Delta} \ot H) & \circ (\pi_X\ot H\ot H)\circ (\partial_X\ot H\ot H)\circ (\partial_X\ot H)\circ \partial_X\\
&\stackrel{\eqref{buluniversal}}{=}(\pi_{X,\Delta}\ot H)\circ (\pi_X\ot H\ot H)\circ (X\ot \Delta \ot H)\circ (\partial_X\ot H)\circ \partial_X\\
&\stackrel{\eqref{doublebullets}}{=} ((X\bul \Delta)\ot H) \circ (\pi_X\ot H) \circ (\partial_X\ot H)\circ \partial_X\\
&\stackrel{\eqref{buluniversal}}{=} ((X\bul \Delta)\ot H) \circ (\pi_X\ot H) \circ (X\ot \Delta)\circ \partial_X\\
&\stackrel{\eqref{doublebullets}}{=} (\pi_{X,\Delta}\ot H)\circ (\pi_X\ot H\ot H)\circ (X\ot \Delta \ot H)\circ (X\ot \Delta)\circ \partial_X\\
&\stackrel{\phantom{(*)}}{=} (\pi_{X,\Delta}\ot H)\circ (\pi_X\ot H\ot H)\circ (X\ot H\ot \Delta)\circ (X\ot \Delta)\circ \partial_X\\
&\stackrel{\phantom{(*)}}{=} (\pi_{X,\Delta}\ot H)\circ ((X\bul H)\ot \Delta)\circ (\pi_X\ot H)\circ (X\ot \Delta)\circ \partial_X\\
&\stackrel{\eqref{buluniversal}}{=} (\pi_{X,\Delta}\ot H)\circ ((X\bul H)\ot \Delta)\circ (\pi_X\ot H)\circ (\partial_X\ot H)\circ \partial_X\\
&\stackrel{\phantom{(*)}}{=} (\pi_{X,\Delta}\ot H)\circ  (\pi_X\ot H\ot H)\circ (\partial_X\ot H\ot H) \circ (X\ot \Delta)\circ \partial_X
\end{align*}
Hence, by the universal property of $(X\bul H,\pi_X)$, there exists a (unique) morphism $u:X\bul H\to X\bul (H\bul H)$ such that 
$$u\circ \pi_X=\pi_{X,\Delta}\circ (\pi_X\ot H)\circ (\partial_X\ot H).$$
Consequently, by the universal property of the pushout $(X\bul H)\bul H$, there exists a (unique) morphism of cospans $\alpha:
(X\bul H)\bul H\to X\bul (H\bul H)$.
Similarly, one obtains that
\begin{multline*}
(\pi_{X\bul H}\ot H)\circ (\pi_X\ot H\ot H)\circ (X\ot \Delta\ot H) \circ (\partial_X \ot H)\circ \partial_X \\
= (\pi_{X\bul H}\ot H)\circ (\pi_X\ot H\ot H) \circ (X \ot \Delta\ot H)\circ (X\ot \Delta)  \circ \partial_X
\end{multline*}
and from this one concludes that there exists a (unique) morphism of cospans $\beta:X\bul (H\bul H)\to (X\bul H)\bul H$. Since the category of cospans over a fixed pair of domains forms a poset, it follows directly from their existence that $\alpha$ and $\beta$ are mutual inverses and hence $(X\bul H)\bul H$ and $X\bul (H\bul H)$ are isomorphic as cospans.

By definition of $X\bul H$ we have that coassociativity holds in $(X\bul H)\ot H$ and hence also in $(X\bul H)\bul H=X\bul (H\bul H)$. Thus, $X$ satisfies axiom \ref{item:QPC2}.
Concerning counitality, let us observe that 
$$(X\ot \varepsilon\ot H)\circ (\partial_X\ot H)\circ \partial_X = \partial_X = (X\ot \varepsilon\ot H)\circ (X\ot \Delta)\circ \partial_X$$
as maps $X\to X\ot H$, by hypothesis on $\partial_X$. By the universal property of $X\bul H$ there exists a unique map $X\bul\varepsilon:X\bul H\to X$ such that $(X\bul\varepsilon)\circ \pi_X = X\ot\varepsilon$. Therefore axiom \ref{item:QPC1} holds too. Thus, $X$ is a geometric partial comodule.
\end{proof}

The converse of Theorem \ref{thm:constructgeometric} does not hold in general, that is to say, there exist geometric partial comodules $(X,X\bul H,\pi_X,\rho_X)$ in $\cC$ such that $\rho_X = \pi_X \circ \partial_X$ for some non-coassociative $\partial_X : X \to X \ot H$, but $(X\bul H,\pi_X)$ does not satisfy the universal property. In fact, for every $(X,\partial_X)$ in $\ncom{H}$ 
the composition $\rho_X \coloneqq (X \otimes \varepsilon) \circ \partial_X = \id_X$ defines a geometric partial comodule structure $(X,X,X \otimes \varepsilon, \id_X)$ on $X$ (it is enough to observe that requiring $X \ot \varepsilon$ to be an epimorphism suffices to prove \cite[Proposition 2.20]{JoostJiawey}). However, $(X,X \ot \varepsilon)$ rarely universally coequalizes the arrows in \eqref{buluniversal}, as the following example shows.

\begin{example}\label{ex:boh}
Consider $\cC = (\Set,\times,\{*\})$. It is well-known (see, for instance, \cite[Lemma 1.3]{Stef-Turaev}) that the diagonal map $\Delta : H \to H \times H, x \mapsto (x,x),$ and the (unique) map $\varepsilon: H \to \{*\}, x \mapsto *,$ make of any non-empty set $H$ a coalgebra in $\cC$ and that this one is the unique coassociative and counital coalgebra structure one can equip $H$ with. In addition, if $X$ is any set and $f : X \to H$ is any function, then the assignment
\[
\partial_X : X \to X \times H, \qquad x \mapsto (x,f(x))
\]
defines a counital coaction (which is automatically coassociative) and all the counital coactions are of this form. 
As a consequence, any counital coaction $\partial_X$ in $\Set$ is in fact coassociative and hence the pair $(X \times H, \id)$ universally coequalizes the arrows in \eqref{buluniversal}. Therefore, all the geometric partial comodules provided by Theorem \ref{thm:constructgeometric} in $\cC = \Set$ are global, but not all the geometric partial comodules are so. For instance, for any non-empty sets $H \neq \{*\}$ and $X$, the trivial geometric partial $H$-comodule structure
\[
\xymatrix @!0 @R=25pt @C=45pt{
X \ar@{=}[dr] & & X \times H \ar@{->>}[dl]^-{X \times \varepsilon} \\
 & X & 
}
\]
on $X$ is not global, whence it cannot come from a universal pair $(X \bul H,\pi_X)$.
\end{example}

It follows from the proof of the next corollary that whenever the underlying category $\cC$ is such that a universal object $(X\bul H,\pi_X)$ coequalizing \eqref{buluniversal} exists for every non-coassociative comodule $(X,\partial_X)$, then the construction of Theorem \ref{thm:constructgeometric} becomes functorial.

\begin{corollary}[of Theorem \ref{thm:constructgeometric}]\label{cor:ncoass}
Let $(H,\Delta,\varepsilon)$ be a coalgebra in a cocomplete monoidal category $\cC$ for which the family $\{Y \ot f : Y \ot H \to Y \mid f \in \cC(H,\I)\}$ is jointly monic for every $Y$ in $\cC$. 
Then there exists a faithful functor 
$$\cP : \ncom{H} \to \gparcom{H}$$
which commutes with the forgetful functors to $\cC$.
\end{corollary}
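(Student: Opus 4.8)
The plan is to build the functor $\cP$ by applying Theorem~\ref{thm:constructgeometric} objectwise and then checking functoriality. The hypotheses of the corollary guarantee, via Lemma~\ref{buliscolimit}\ref{item:ncc1} applied in the cocomplete category $\cC$, that for every $(X,\partial_X) \in \ncom{H}$ the universal arrow $\pi_X : X \ot H \to X \bul H$ coequalizing the parallel pair in \eqref{buluniversal} exists (it is realized as a genuine colimit, namely the universal arrow coequalizing the compositions $(X \ot H \ot f)\circ(\partial_X \ot H)\circ \partial_X$ and $(X \ot H \ot f)\circ(X \ot \Delta)\circ \partial_X$ over all $f \in \cC(H,\I)$). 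Hence Theorem~\ref{thm:constructgeometric} applies to every object of $\ncom{H}$, and we may set $\cP(X,\partial_X) \coloneqq (X, X\bul H, \pi_X, \pi_X \circ \partial_X)$, which is a geometric partial $H$-comodule.

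Next I would define $\cP$ on morphisms. Given a morphism $f : (X,\partial_X) \to (X',\partial_{X'})$ in $\ncom{H}$, i.e.\ a map $f : X \to X'$ in $\cC$ with $\partial_{X'} \circ f = (f \ot H)\circ \partial_X$, I would produce the second component $f \bul H : X \bul H \to X' \bul H$ by the universal property of $(X\bul H, \pi_X)$ applied to the composite $t \coloneqq \pi_{X'} \circ (f \ot H) : X \ot H \to X' \bul H$. One checks that $t$ coequalizes the pair in \eqref{buluniversal} for $X$: using the comodule-map identity for $f$ and then the defining identity of $\pi_{X'}$, we get $(t \ot H)\circ(\partial_X \ot H)\circ\partial_X = (\pi_{X'} \ot H)\circ(\partial_{X'} \ot H)\circ(f\ot H)\circ\partial_X = \dots = (t\ot H)\circ(X\ot\Delta)\circ\partial_X$ (the suppressed steps use naturality of $\ot$, the comodule identity, coassociativity is \emph{not} needed here since we only compare through $\pi_{X'}\ot H$). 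Thus there is a unique $f\bul H : X\bul H \to X'\bul H$ with $(f\bul H)\circ \pi_X = \pi_{X'}\circ(f\ot H)$. The commutativity of diagram \eqref{eq:shield} for the pair $(f, f\bul H)$ then amounts to $(f\bul H)\circ \pi_X = \pi_{X'}\circ(f\ot H)$ (the left square/triangle) and $(f\bul H)\circ \rho_X = \rho_{X'}\circ f$, and the latter follows by composing the former with $\partial_X$ on the right and using $\rho_X = \pi_X\circ\partial_X$, $\rho_{X'} = \pi_{X'}\circ\partial_{X'}$, and the comodule-map identity for $f$.

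Functoriality (preservation of identities and composition) is then immediate from the \emph{uniqueness} clause in the universal property of $\pi_X$: both $\id_{X\bul H}$ and $(g\bul H)\circ(f\bul H)$ satisfy the defining equation that characterizes $\id_X \bul H$ and $(g\circ f)\bul H$ respectively, so they coincide. Commuting with the forgetful functors to $\cC$ is built into the construction, since the first component of $\cP(f)$ is literally $f$. Faithfulness follows for the same reason: if $\cP(f) = \cP(g)$ then in particular their first components agree, so $f = g$. The only genuinely nontrivial point is the existence of $\pi_X$ for \emph{all} objects simultaneously, which is exactly what the cocompleteness plus jointly-monic hypothesis buys us through Lemma~\ref{buliscolimit}; everything after that is a routine diagram chase driven by universal properties, so I expect no real obstacle beyond keeping the bookkeeping of the coequalizer conditions straight.
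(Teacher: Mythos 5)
Your proposal is correct and follows essentially the same route as the paper: existence of $\pi_X$ for every object via cocompleteness and Lemma~\ref{buliscolimit}, objectwise application of Theorem~\ref{thm:constructgeometric}, definition of $f\bul H$ through the universal property applied to $\pi_{X'}\circ(f\ot H)$ (with exactly the same coequalizing computation, which indeed does not need coassociativity), and then commutativity of \eqref{eq:shield}, functoriality by uniqueness, and faithfulness from the first component being $f$ itself.
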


\begin{proof}
By cocompleteness and by Lemma \ref{buliscolimit}, we know that for any object $(X,\partial_X)$ in $\ncom{H}$ there exists a morphism $\pi_X:X\ot H\to X\bul H$, universally coequalizing \eqref{buluniversal}. Hence, it follows from Theorem \ref{thm:constructgeometric} that the functor $\cP$ can be defined at the level of objects by $\cP(X,\partial_X)=(X,X\bul H, \pi_X, \pi_X\circ \partial_X)$. 
Let us show that for any morphism $f:(X,\partial_X)\to (X',\partial_{X'})$ in $\ncom{H}$, the underlying morphism in $\cC$ is also a morphism of geometric partial comodules from $\cP(X,\partial_X)$ to $\cP(X',\partial_{X'})$. 

Indeed, by using the defining property of morphisms in $\ncom{H}$ we find 
\begin{align*}
&  (\pi_{X'} \otimes H) \circ (f \otimes H \otimes H) \circ (\partial_X \otimes H) \circ \partial_X = (\pi_{X'} \otimes H) \circ (\partial_{X'} \otimes H)  \circ (f \otimes H) \circ \partial_X \\
& = (\pi_{X'} \otimes H) \circ (\partial_{X'} \otimes H) \circ \partial_{X'}  \circ f \stackrel{\eqref{buluniversal}}{=} (\pi_{X'} \otimes H) \circ (X' \otimes \Delta) \circ \partial_{X'}  \circ f \\
& = (\pi_{X'} \otimes H) \circ (X' \otimes \Delta) \circ (f \otimes H) \circ \partial_X =  (\pi_{X'} \otimes H) \circ (f \otimes H \otimes H) \circ (X \otimes \Delta) \circ \partial_X.
\end{align*}
Then, by the universal property of $(X\bul H,\pi_X)$, there exists a unique morphism $f\bul H: X\bul H \to X'\bul H$ such that
\begin{equation}\label{eq:pif}
(f \bul H) \circ \pi_X = \pi_{X'} \circ (f \otimes H).
\end{equation}
By definition of $\rho_X = \pi_X \circ \partial_X$, we have that
\[
(f \bul H) \circ \rho_X = (f \bul H) \circ\pi_X \circ \partial_X \stackrel{\eqref{eq:pif}}{=} \pi_{X'} \circ (f \otimes H)\circ \partial_X = \pi_{X'}\circ \partial_{X'} \circ f = \rho_{X'} \circ f
\]
and hence
\[
\xymatrix @!0 @R=16pt @C=60pt {
X \ar[dd]_-{f} \ar[dr]^-{\rho_X} & & X\ot H \ar[dl]_-{\pi_X} \ar[dd]^-{f\ot H} \\ 
 & X\bul H \ar[dd]_-{f\bul H} &  \\
X' \ar[dr]_-{\rho_{X'}} & & X' \ot H \ar[dl]^-{\pi_{X'}} \\
 & X' \bul H & 
}
\]
commutes, making of $(f,f\bul H)$ a morphism of geometric partial comodules.

Obviously, this turns $\cP$ into a functor commuting with the forgetful functors, which is then faithful.
\end{proof}

Before continuing, let us show that 
the functor $\cP$ of Corollary \ref{cor:ncoass} is in general neither full, nor essentially surjective on objects. 
Hence, even in those favourable cases in which every non-coassociative comodule gives rise to a geometric partial comodule, still not every geometric partial comodule, nor every geometric partial comodule morphism, can be obtained in this way, indicating that the category of geometric partial comodules has a richer structure.

\begin{example}
Consider the $2$-dimensional complex coalgebra $H$ generated by a grouplike element $g$ and a $g$-primitive element $x$. That is, $H=\C g\oplus \C x$ with comultiplication $\Delta$ and counit $\varepsilon$ defined by
\begin{eqnarray*}
& \Delta(g)=g\ot g, & \qquad \varepsilon(g)=1, \\
& \Delta(x)=g\ot x + x\ot g, & \qquad \varepsilon(x)=0.
\end{eqnarray*}
Now consider $V=\C g\oplus \C x$, which is the same complex vector space, but which we endow with a coaction $\partial:V\to V\ot H$ defined by
\[
\partial(x)=x\ot g \qquad \text{and} \qquad \partial(g)=g\ot g+ g\ot x.
\]
One can easily verify that this defines a counital but non-coassociative $H$-comodule. By applying Lemma~\ref{buliscolimit}\ref{item:ncc1} and Theorem~\ref{thm:constructgeometric}, we obtain a geometric partial comodule structure on $V$ where $V\bul H = (V\ot H)/\C(g\ot x)$, $\pi:V\ot H\to (V\ot H)/\C(g\ot x)$ is the natural projection and $\rho=\pi\circ \partial:V\to V\bul H$ is the partial coaction which is given by
$$\rho(g)=\ol{g\ot g}, \qquad \rho(x)=\ol{x\ot g}.$$
Now consider the $\C$-linear map
$$f:V\to V,\qquad f(x)=g=f(g).$$
Then one easily checks that 
$$(f\ot H)\partial(x)=g\ot g \neq \partial f(x)=g\ot g+g\ot x.$$
Therefore, $f$ is not a morphism in $\ncom{H}$. On the other hand, $(f\ot H)(g\ot x)=g\ot x$ hence $f\bul H:V\bul H\to V\bul H$ is well-defined and
\begin{eqnarray*}
\big((f\bul H)\circ \rho\big)(g)=\ol{g\ot g} = \rho\circ f(g), \\
\big((f\bul H)\circ \rho\big)(x)=\ol{x\ot g} = \rho\circ f(x),
\end{eqnarray*}
hence $f$ is a morphism in $\gparcom{H}$. Thus, the functor $\cP$ from Corollary~\ref{cor:ncoass} is not full. 
\end{example}

\begin{example}\label{ex:nass}
In $\cC = \op{\Ab}$ consider the geometric partial $\Q$-comodule structure on $\Z$ induced by the multiplication of $\Q$, that is to say, consider the pushout in $\cC$
\begin{equation}\label{eq:nass}
\begin{gathered}
\xymatrix @!0 @R=22pt @C=55pt {
 & \Q & \\
 \Q \otimes_\Z \Z \ar[ur]^(0.55){m_\Q\circ(\Q\otimes_Z \iota_\Z)\,\ } & & \Z \ar@{_{(}->}[ul]_-{\iota_\Z} \\
 & \Q\bul\Z. \ar@{}[u]|<<<{\pushout} \ar[ur]_-{\rho_\Z} \ar@{_{(}->}[ul]^-{\pi_\Z} &
}
\end{gathered}
\end{equation}
It is well-known that, in fact, $\Q \otimes_\Z \Q \cong \Q$ via the multiplication $m_\Q$ and hence $m_\Q \circ (\Q \otimes_\Z \iota_\Z) = \id_\Q$. Moreover, since it is clear that the pushout in $\op{\Ab}$ of the pair $(\id_\Q,\iota_\Z)$ is nothing else than $(\Z,\iota_\Z,\id_\Z)$, diagram \eqref{eq:nass} becomes
\[
\begin{gathered}
\xymatrix @!0 @R=19pt @C=33pt {
 & \Q & \\
 \Q \ar@{=}[ur] & & \Z \ar@{_{(}->}[ul]_-{\iota_\Z} \\
 & \Z. \ar@{}[u]|<<<{\pushout} \ar@{=}[ur] \ar@{_{(}->}[ul]^-{\iota_\Z} &
}
\end{gathered}
\]
Since there does not exist any non-zero morphism of abelian groups $\Q \to \Z$, it follows that the geometric partial $\Q$-comodule $(\Z,\Z,\iota_\Z,\id_\Z)$ in $\cC = \op{\Ab}$ cannot be obtained from a non-coassociative comodule structure $(X,\partial_X)$ on $X$ as in Theorem \ref{thm:constructgeometric}. 
Therefore, $\cP:\mathsf{NCom}^\Q \to \mathsf{gPCom}^\Q$ from Corollary \ref{cor:ncoass} is not essentially surjective on objects. 
\end{example}

Let us conclude this section by showing that, under mild conditions, the geometric partial comodules arising from non-coassociative coactions are globalizable.

\begin{proposition}
\label{ex:globnoncoass} 
Let $(X,\partial_X)\in \ncom{H}$ be a non-coassociative counital coaction and assume that an arrow $\pi_X : X \ot H \to X \bul H$ universally coequalizing \eqref{buluniversal} exists. Assume also that the equalizer $\big((Y,\delta),\kappa\big)$ as in \eqref{eq:glob} 
exists in $\com{H}$ and that it is preserved by the forgetful functor to $\cC$. 
Then, $(Y,\delta)$ becomes the globalization of the geometric partial comodule $(X,X\bul H,\pi_X,\rho_X)$. 
\end{proposition}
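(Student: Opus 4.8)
The strategy is to verify that the data $(Y,\delta)$ together with the morphism $\epsilon_X = (X\ot\varepsilon)\circ\kappa : Y \to X$ satisfies the two conditions \ref{item:glob1} and \ref{item:glob3} of Theorem~\ref{thm:globalization}, since condition \ref{item:glob1} holds by hypothesis. The only genuine task is therefore to establish \ref{item:glob3}, namely that diagram \eqref{eq:GXglob} is a pushout in $\cC$. The key observation allowing this is that, by Theorem~\ref{thm:constructgeometric}, the partial coaction is $\rho_X = \pi_X \circ \partial_X$, and that $\pi_X : X\ot H \to X\bul H$ is itself a \emph{universal} coequalizer of the two arrows in \eqref{buluniversal}; so $X\bul H$ is, by construction, the colimit of a diagram into which the equalizer $Y$ maps. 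The plan is to exploit this interplay: compute the pushout of the cospan $X \xleftarrow{\epsilon_X} Y \xrightarrow{\kappa} X\ot H$ directly and identify it with $X\bul H$ via $\pi_X$.

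\textbf{Key steps.} First I would record that $\kappa : Y \to X\ot H$ equalizes $\rho_X\ot H = (\pi_X\ot H)\circ(\partial_X\ot H)$ and $(\pi_X\ot H)\circ(X\ot\Delta)$ (this is the defining property of the equalizer in \eqref{eq:glob}, rewritten using $\rho_X = \pi_X\circ\partial_X$), and that $\delta = (\epsilon_X\ot H)\circ\kappa$ together with $\kappa = (\epsilon_X\ot H)\circ\delta$, as in the last sentence of Theorem~\ref{thm:globalization} --- although here we must reprove the relevant bits since we are trying to \emph{deduce} that $(Y,\delta)$ is the globalization rather than assuming it. More honestly: the cleanest route is to show \eqref{eq:GXglob} is a pushout by hand. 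Take a test object $T$ with morphisms $g : X \to T$ and $h : X\ot H \to T$ in $\cC$ satisfying $g\circ\epsilon_X = h\circ\kappa$. Using that $Y$, as an object of $\com{H}$, carries the coaction $\delta$ restricting $X\ot\Delta$ along $\kappa$, and using $\partial_X : X \to X\ot H$, one produces from $h$ a candidate morphism $X\ot H \to T$ that coequalizes the pair \eqref{buluniversal}; the universal property of $(X\bul H,\pi_X)$ then yields a unique $\tilde{h} : X\bul H \to T$, and one checks $\tilde{h}\circ\pi_X = h$ and $\tilde{h}\circ\rho_X = g$. The main input is that, since the forgetful functor preserves the equalizer \eqref{eq:glob}, $Y$ is the \emph{same} object (in $\cC$) as the equalizer of the two parallel $\cC$-morphisms $\rho_X\ot H$ and $(\pi_X\ot H)\circ(X\ot\Delta)$ from $X\ot H$ to $(X\bul H)\ot H$; and the canonical arrow $X \to X\ot H$ that lands in this equalizer is precisely $\partial_X$, because $\pi_X$ coequalizes \eqref{buluniversal} means exactly $(\rho_X\ot H)\circ\partial_X = (\pi_X\ot H)\circ(X\ot\Delta)\circ\partial_X$. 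Thus $\partial_X$ factors (uniquely, in $\com{H}$ by counitality, hence in $\cC$) through $\kappa$, giving a section-like morphism $s : X \to Y$ with $\kappa\circ s = \partial_X$ and $\epsilon_X\circ s = (X\ot\varepsilon)\circ\partial_X = \id_X$. This $s$ is the bookkeeping device that lets one build $\tilde h$ from $h$ and verify the two compatibilities.

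\textbf{Main obstacle.} The delicate point is verifying the pushout universal property \emph{as a pushout in $\cC$}, not merely that the outer square \eqref{eq:GXglob} commutes with the correct legs; one must produce the mediating morphism and prove its uniqueness. Uniqueness is immediate because $\pi_X$ is an epimorphism (hence any two mediating morphisms agree after precomposition with $\pi_X = \kappa$'s cofactor, and $\epsilon_X$ is likewise epi by Theorem~\ref{thm:globalization}). Existence is where the real work lies: given $(g,h)$ as above, the natural guess is $\tilde h$ induced by the universal property of $(X\bul H,\pi_X)$ applied to $h : X\ot H \to T$, but one must check that $h$ actually coequalizes the pair \eqref{buluniversal}, i.e.\ that $(h\ot H)\circ(\partial_X\ot H)\circ\partial_X = (h\ot H)\circ(X\ot\Delta)\circ\partial_X$. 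This is not automatic from $g\circ\epsilon_X = h\circ\kappa$ alone; it requires leveraging that $h = \tilde h_0\circ\kappa$-type factorizations are available because $Y$ equalizes the $H$-tensored pair, together with the comodule structure $\delta$ on $Y$. I expect roughly half a page of diagram-chasing, the heart of which is: $h\circ\kappa = g\circ\epsilon_X = g\circ(X\ot\varepsilon)\circ\kappa$, and then transporting this equality along the $H$-comodule structure of $Y$ (applying $-\ot H$, precomposing with $\delta$, and using $\kappa$'s equalizing property) to land back at the needed coequalizing identity for $h$. Once $\tilde h$ is produced, the identities $\tilde h\circ\rho_X = \tilde h\circ\pi_X\circ\partial_X = h\circ\partial_X$ and the check that $h\circ\partial_X = g$ (again via the equalizer relation and $\epsilon_X\circ s = \id_X$) close the argument, and then Theorem~\ref{thm:globalization} identifies $(Y,\delta)$ as the globalization of $(X,X\bul H,\pi_X,\rho_X)$.
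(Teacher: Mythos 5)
Your proposal is correct and follows essentially the same route as the paper's proof: you construct the section $s:X\to Y$ with $\kappa\circ s=\partial_X$ and $\epsilon_X\circ s=\id_X$ (the paper's $\sigma$), show that the test leg $h:X\ot H\to T$ coequalizes \eqref{buluniversal} using colinearity of $\kappa$, the relation $h\circ\kappa=g\circ\epsilon_X$ and $\kappa=(\epsilon_X\ot H)\circ\delta$, and then obtain the mediating morphism from the universal property of $(X\bul H,\pi_X)$ before concluding via Theorem \ref{thm:globalization}. Only a cosmetic slip: the aside ``$\delta=(\epsilon_X\ot H)\circ\kappa$'' does not typecheck, but it plays no role in the argument.
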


\begin{proof}
In fact, $\partial_X$ equalizes $\big(\rho_X \ot H, (\pi_X \ot H)\circ (X \ot \Delta)\big)$ by definition of $\pi_X$ and hence there exists a unique morphism $\sigma:X \to Y$ in $\cC$ such that $\kappa \circ \sigma = \partial_X$. By remembering that $\pi_X \circ \kappa = \rho_X \circ \epsilon_X$, the latter entails that 
$\rho_X \circ \epsilon_X \circ \sigma = \pi_X \circ \kappa \circ \sigma = \pi_X \circ \partial_X = \rho_X$ 
and so $\epsilon_X \circ \sigma = \id_X$.  
Furthermore, if $X \xrightarrow{f} Z \xleftarrow{g} X \ot H$ is a diagram in $\cC$ such that $f \circ \epsilon_X = g \circ (\epsilon_X \ot H) \circ \delta$, then
\begin{equation}\label{eq:somerels}
f \circ \epsilon_X = g \circ \kappa \qquad \text{and} \qquad f = f \circ \epsilon_X \circ \sigma = g \circ \kappa \circ \sigma = g \circ \partial_X,
\end{equation}
whence
\begin{multline*}
(g \ot H) \circ (X \ot \Delta) \circ \partial_X = (g \ot H) \circ (X \ot \Delta) \circ \kappa \circ \sigma = (g \ot H) \circ (\kappa \ot H) \circ \delta \circ \sigma \\
\stackrel{\eqref{eq:somerels}}{=} (f \ot H) \circ (\epsilon_X \ot H) \circ \delta \circ \sigma = (f \ot H) \circ \kappa \circ \sigma = (f \ot H) \circ \partial_X \stackrel{\eqref{eq:somerels}}{=} (g \ot H) \circ (\partial_X \ot H) \circ \partial_X,
\end{multline*}
where in the second equality we used the fact that $\kappa$ is a morphism of $H$-comodules. By the universal property of $\pi_X$, there exists a unique $\tau: X \bul H \to Z$ such that $\tau \circ \pi_X = g$ and so, as a consequence, $\tau \circ \rho_X = \tau \circ \pi_X \circ \partial_X = g \circ \partial_X \stackrel{\eqref{eq:somerels}}{=} f$. Summing up, diagram \eqref{eq:GXglob} is a pushout square and so the conditions of Theorem \ref{thm:globalization} are satisfied.
\end{proof}


\section{Globalization in abelian monoidal categories}\label{sec:abelian}


\subsection{Geometric partial comodules in abelian categories}\label{ssec:abelian}

Assume that $\cC$ is an abelian monoidal category  
and that $(H,\Delta,\varepsilon)$ is any coalgebra in $\cC$.
Our first aim is to establish a general criterion for the existence of globalization, which will then allow us to conclude that geometric partial comodules are always globalizable if the coalgebra is flat.  
To this aim, let us recall the following well-known fact.

\begin{lemma}[{\cite[Note to \S2.4 at page 34]{Popescu}}]\label{lemma:A1}
Consider a square
\[
\xymatrix @C=16pt @R=4pt{
 & A \ar[dl]_-{g} \ar[dr]^-{f} & \\
B \ar[dr]_-{k} & & C \ar[dl]^-{h} \\
 & D & 
}
\]
where $f$ is a monomorphism and $h$ is an epimorphism. Then in an abelian category, this is a pushout (cocartesian) square if and only if it is a pullback (cartesian) square.
\end{lemma}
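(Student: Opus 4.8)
The plan is to encode both the pushout and the pullback property of the given commutative square in terms of a single two-term complex built from the direct sum $B\oplus C$, and then to read off the equivalence from two basic structural facts about abelian categories. First I would set $u\coloneqq\binom{g}{-f}\colon A\to B\oplus C$ (the morphism with components $g$ and $-f$ under the two projections) and $v\coloneqq(k,\,h)\colon B\oplus C\to D$ (the morphism with components $k$ and $h$ under the two injections), and observe that commutativity of the square, $kg=hf$, is exactly the assertion $v\circ u=0$. The universal properties of the direct sum then translate the two conditions to be compared into: the square is a pushout of the span $B\xleftarrow{g}A\xrightarrow{f}C$ if and only if $v$ is a cokernel of $u$; and it is a pullback of the cospan $B\xrightarrow{k}D\xleftarrow{h}C$ if and only if $u$ is a kernel of $v$. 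I would spend a sentence fixing the signs so that these two reformulations genuinely refer to the \emph{same} pair $(u,v)$.

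Next I would record two elementary observations. Composing $u$ with the projection $B\oplus C\to C$ yields $-f$, which is a monomorphism by hypothesis; hence $u$ is a monomorphism. Dually, composing the injection $C\to B\oplus C$ with $v$ yields $h$, which is an epimorphism by hypothesis; hence $v$ is an epimorphism.

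The equivalence now follows from the standard facts that in an abelian category every monomorphism is the kernel of its cokernel and every epimorphism is the cokernel of its kernel. Indeed, if the square is a pushout then $v=\coker(u)$; since $u$ is a monomorphism, $u=\ker(\coker(u))=\ker(v)$, so the square is a pullback. Conversely, if the square is a pullback then $u=\ker(v)$; since $v$ is an epimorphism, $v=\coker(\ker(v))=\coker(u)$, so the square is a pushout. It is worth remarking that the first implication uses only that $f$ is a monomorphism and the second only that $h$ is an epimorphism.

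I do not anticipate a genuine obstacle: the argument is essentially bookkeeping together with two axioms. The only points requiring care are making sure that the span and the cospan are encoded by the very same $u$ and $v$ (up to the chosen signs), and the observation that the whole argument rests on the abelian structure --- precisely the property ``mono $=$ kernel of cokernel, epi $=$ cokernel of kernel'' that fails in a merely additive category, which is why the hypothesis ``$\cC$ abelian'' is indispensable. A less conceptual alternative would be to embed $\cC$ into a category of abelian groups via an embedding theorem and run an element chase, but the direct-sum argument above is shorter and works directly in $\cC$, so I would favour it.
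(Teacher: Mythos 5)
Your proof is correct: encoding the square by $u=\binom{g}{-f}\colon A\to B\oplus C$ and $v=(k,h)\colon B\oplus C\to D$, translating the pushout and pullback properties into ``$v$ is a cokernel of $u$'' and ``$u$ is a kernel of $v$'' (with the signs matching as you indicate), and then invoking that in an abelian category every monomorphism is the kernel of its cokernel and every epimorphism is the cokernel of its kernel, is a complete and standard argument; your remark that each implication uses only one of the two hypotheses ($f$ mono for pushout $\Rightarrow$ pullback, $h$ epi for the converse) is also accurate. The paper itself does not prove this lemma but cites it from Popescu's book, so there is no in-paper argument to compare with; your kernel--cokernel proof is precisely the expected one.
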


\begin{proposition}\label{prop:superthm}
Assume that $(X,X\bul H,\pi_X,\rho_X)$ is a partial comodule datum satisfying the counitality condition \ref{item:QPC1}. Consider the pullback
\[
\begin{gathered}
\xymatrix @!0 @R=23pt @C=60pt{
 & T \ar[dl]_-{\varpi} \ar[dr]^-{\lambda} \ar@{}[d]|<<<{\pullback} & \\
X \ar[dr]_-{\rho_X} & & X\otimes H \ar@{->>}[dl]^-{\pi_X} \\
 & X\bullet H
}
\end{gathered}
\]
in $\cC$. Then $(X,X\bul H,\pi_X,\rho_X)$ is a geometric partial $H$-comodule if and only if
\begin{equation}\label{eq:abelian0}
\begin{gathered}
\xymatrix@C=25pt{
T \ar@<+0.2ex>[r]^-{\lambda} & X\otimes H \ar@<+0.8ex>[rrr]^-{\rho_X\ot H} \ar@<-0.4ex>[rrr]_-{(\pi_X\ot H)\circ(X \otimes \Delta)} & & & X\bul H\otimes H
}
\end{gathered}
\end{equation}
is an equalizer in $\cC$.
\end{proposition}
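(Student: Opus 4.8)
The plan is to squeeze everything out of the counitality axiom \ref{item:QPC1} together with abelianness. First I would record what \ref{item:QPC1} buys: $\rho_X$ is a split monomorphism with retraction $X\bullet\varepsilon$, so $X\bullet H\cong X\oplus K$ with $K:=\ker(X\bullet\varepsilon)=\coker\rho_X$; consequently $\lambda$ is a monomorphism and $\varpi$ an epimorphism (pullbacks of a mono and of an epi), $\varpi=(X\otimes\varepsilon)\circ\lambda$, and, viewing $T$ as a subobject of $X\otimes H$, one has $T=\ker\big(\pi_X-\rho_X\circ(X\otimes\varepsilon)\big)$, whence $\ker\pi_X\subseteq T$. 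Set $q:=(\rho_X\otimes H)-(\pi_X\otimes H)\circ(X\otimes\Delta)\colon X\otimes H\to(X\bullet H)\otimes H$. Two counit computations will do the heavy lifting: $\big((X\bullet H)\otimes\varepsilon\big)\circ q=\rho_X\circ(X\otimes\varepsilon)-\pi_X$, so $\ker q\subseteq T$; and $\big((X\bullet\varepsilon)\otimes H\big)\circ q=0$ (using $(X\bullet\varepsilon)\rho_X=\id$ and the counit of $H$), so $\im q$ lies in the direct summand $K\otimes H$ of $(X\bullet H)\otimes H=\im(\rho_X\otimes H)\oplus(K\otimes H)$. Finally, in the pullback square defining $T$ the map $\rho_X$ is mono and $\pi_X$ is epi, so Lemma \ref{lemma:A1} makes it a pushout; hence $X\bullet H$ is also the pushout of $X\xleftarrow{\varpi}T\xrightarrow{\lambda}X\otimes H$, and two morphisms out of $X\bullet H$ agree as soon as they agree after precomposition with $\rho_X$ and with $\pi_X$.

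Since $\ker q$ always factors through $\lambda$ (because $\ker q\subseteq T$), the diagram \eqref{eq:abelian0} is an equalizer if and only if $q\circ\lambda=0$. So the whole statement reduces to: under \ref{item:QPC1}, axiom \ref{item:QPC2} holds if and only if $q\circ\lambda=0$. Throughout I would use the defining relations of the pushouts in \eqref{doublebullets} — $(\rho_X\bullet H)\circ\pi_X=\pi_{X\bullet H}\circ(\rho_X\otimes H)$ and $(X\bullet\Delta)\circ\pi_X=\pi_{X,\Delta}\circ(\pi_X\otimes H)\circ(X\otimes\Delta)$ — the fact that $\pi_{X\bullet H}$ and $\pi_{X,\Delta}$ are epimorphisms, and the kernel identities $\ker\pi_{X\bullet H}=(\rho_X\otimes H)(\ker\pi_X)$ and $\ker\pi_{X,\Delta}=(\pi_X\otimes H)(X\otimes\Delta)(\ker\pi_X)$.

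If $q\circ\lambda=0$, then $q$ kills $\ker\pi_X\subseteq T$, so $(\rho_X\otimes H)$ and $(\pi_X\otimes H)(X\otimes\Delta)$ agree on $\ker\pi_X$ and hence $\ker\pi_{X\bullet H}=\ker\pi_{X,\Delta}$. The universal property of the pushout $X\bullet(H\bullet H)$ then produces $\theta\colon X\bullet(H\bullet H)\to(X\bullet H)\bullet H$ with $\theta\circ\pi_{X,\Delta}=\pi_{X\bullet H}$ (using $\ker\pi_{X,\Delta}=\ker\pi_{X\bullet H}$ both for the compatibility and for invertibility), its inverse is constructed symmetrically, and the geometric coassociativity identity is verified by precomposing with the epimorphism $\varpi$, using $\rho_X\circ\varpi=\pi_X\circ\lambda$ and the pushout relations to reduce it to $\pi_{X\bullet H}\circ q\circ\lambda=0$; this is a cospan argument identical in spirit to the proof of Theorem \ref{thm:constructgeometric}. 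So \ref{item:QPC2} holds. Conversely, assume \ref{item:QPC2}. Precomposing the geometric coassociativity square with $\varpi$ and rewriting the four legs via the pushout relations and $\theta\circ\pi_{X,\Delta}=\pi_{X\bullet H}$ yields $\pi_{X\bullet H}\circ q\circ\lambda=0$, i.e., $\im(q\circ\lambda)\subseteq\ker\pi_{X\bullet H}\subseteq\im(\rho_X\otimes H)$; since we also know $\im q\subseteq K\otimes H$ and the two summands $\im(\rho_X\otimes H)$ and $K\otimes H$ intersect trivially, it follows that $q\circ\lambda=0$.

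The one step that is not pure bookkeeping is this last upgrade, and I expect it to be the crux: the mere existence of the coassociativity isomorphism $\theta$ only delivers $\pi_{X\bullet H}\circ q\circ\lambda=0$, and it is the splitting $X\bullet H\cong X\oplus K$ coming from \ref{item:QPC1} — together with the identity $\big((X\bullet\varepsilon)\otimes H\big)\circ q=0$ — that forces $\pi_{X\bullet H}$ to be injective on $\im q$ and hence lets one conclude $q\circ\lambda=0$. Everything else is diagram chasing with the pushouts of \eqref{doublebullets} and the counit of $H$.
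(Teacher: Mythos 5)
Your argument is correct, and its overall architecture coincides with the paper's: both reduce the equalizer condition \eqref{eq:abelian0} to the single identity $(\rho_X\ot H)\circ\lambda=(\pi_X\ot H)\circ(X\ot\Delta)\circ\lambda$ (your $q\circ\lambda=0$), and then prove that this identity is equivalent to \ref{item:QPC2}; your handling of the equalizer half via $\ker q\subseteq T$ is just a kernel-language rephrasing of the paper's mutually inverse maps $\tau,\tau'$. Where you genuinely diverge is in the mechanism for the crux implication \ref{item:QPC2}$\,\Rightarrow q\circ\lambda=0$. The paper composes the pullback square defining $T$ with the square $(a)$ of \eqref{eq:abelian2} (a pullback by Lemma \ref{lemma:A1}), uses the universal property of the composite pullback to produce an endomorphism $\sigma$ of $T$, and then applies $(X\bul\varepsilon)\ot H$ to force $\sigma=\id_T$. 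You instead exploit the splitting $(X\bul H)\ot H=\im(\rho_X\ot H)\oplus(K\ot H)$ furnished by \ref{item:QPC1} and play $\im q\subseteq K\ot H$ against $\im(q\circ\lambda)\subseteq\ker\pi_{X\bul H}=(\rho_X\ot H)(\ker\pi_X)\subseteq\im(\rho_X\ot H)$. Both hinge on the same counit computation $((X\bul\varepsilon)\ot H)\circ q=0$; yours trades universal-property bookkeeping for explicit kernel/image identities, which is arguably more transparent in the abelian setting, at the cost of invoking a few standard but unproved facts (pushout along an epi is the quotient by the image of the kernel; the pullback of a mono is the kernel of the composite with its cokernel). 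One small imprecision: ``the universal property of the pushout $X\bul(H\bul H)$ produces $\theta$'' is not literally what you use --- that would require $\pi_{X\bul H}\circ q=0$ on all of $X\ot H$ --- but your parenthetical makes clear that $\theta$ is the canonical isomorphism between two quotients of $(X\bul H)\ot H$ by the equal subobjects $\ker\pi_{X,\Delta}=\ker\pi_{X\bul H}$, which is sound and is the concrete counterpart of the paper's ``pushouts of the same diagram'' step.
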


\begin{proof}
We begin by proving that $X$ is a geometric partial comodule if and only if 
\begin{equation}\label{eq:abelian}
\left(\rho_X \otimes H\right) \circ \lambda = \left(\pi_X \otimes H\right) \circ \left(X \otimes \Delta\right) \circ \lambda.
\end{equation}
Let us consider the following diagrams
\begin{equation}\label{eq:abelian2}
\newdir{ >}{{}*!/-10pt/@{>}}
\begin{gathered}
\xymatrix @C=40pt{
T \ar@{}[dr]|{(t)} \ar[d]_-{\varpi} \ar@{ >->}[r]^-{\lambda} \ar@{}[dr]|(0.2){\big\lrcorner} & X \ot H \ar@{}[dr]|{(a)} \ar@{->>}[d]^-{\pi_X} \ar@{ >->}[r]^-{\rho_X \ot H} & X\bul H \ot H \ar@{->>}[d]^-{\pi_{X \bul H}} \\
X \ar@{ >->}[r]_-{\rho_X} & X \bul H \ar@{ >->}[r]_-{\rho_X \bul H} & (X \bul H) \bul H \ar@{}[ul]|(0.2){\big\ulcorner}
}
\end{gathered}
\end{equation}
and
\begin{equation}\label{eq:abelian3}
\newdir{ >}{{}*!/-10pt/@{>}}
\begin{gathered}
\xymatrix @C=30pt{
T \ar@{}[dr]|{(t)} \ar@{ >->}[r]^-{\lambda} \ar[d]_-{\varpi} \ar@{}[dr]|(0.2){\big\lrcorner} & X \ot H \ar[rr]^-{(\pi_X \ot H) \circ (X \ot \Delta)} \ar@{->>}[d]^-{\pi_X} & & X \bul H \ot H \ar@{->>}[d]^{\pi_{X,\Delta}} \\
X \ar@{ >->}[r]_-{\rho_X} & X \bul H \ar@{ >->}[rr]_-{X \bul \Delta} & & X \bul (H \bul H) \ar@{}[ul]|(0.2){\big\ulcorner}
}
\end{gathered}
\end{equation}
where, by Lemma \ref{lemma:A1}, $(t)$ is also a pushout square and $(a)$ is also a pullback square. If \eqref{eq:abelian} holds, then $(X \bul H) \bul H$ and $X\bul (H \bul H)$ are pushouts of the same diagram and hence there exists a unique isomorphism $\theta:X \bul (H \bul H) \to (X \bul H) \bul H$ such that
\[\theta \circ (X \bul \Delta) \circ \rho_X = (\rho_X \bul H) \circ \rho_X \qquad \text{and} \qquad \theta \circ \pi_{X,\Delta} = \pi_{X \bul H},\]
which is the geometric coassociativity \ref{item:QPC2}. Conversely, assume that \ref{item:QPC2} is satisfied. Then, in view of the commutativity of \eqref{eq:abelian3} and by the universal property of $T$ as a pullback of \eqref{eq:abelian2}, there exists a unique morphism $\sigma: T \to T$ such that
\[\varpi \circ \sigma = \varpi \qquad \text{and} \qquad (\rho_X \ot H) \circ \lambda \circ \sigma = (\pi_X \ot H) \circ (X \ot \Delta) \circ \lambda.\]
By applying $X \bul \varepsilon \ot H$ to both sides of the latter equality, we conclude that $\lambda \circ \sigma = \lambda$ and, being $\lambda$ a monomorphism, that $\sigma =\id_T$.

Now, let us show that \eqref{eq:abelian} holds if and only if \eqref{eq:abelian0} is an equalizer. Since one implication is trivial, let us focus on the other.
Call $(E,e)$ the equalizer of $\rho_X\ot H$ and $(\pi_X\ot H)\circ (X\ot \Delta)$ in $\cC$. If \eqref{eq:abelian} holds, then $\lambda$ equalizes $\rho_X\ot H$ and $(\pi_X\ot H)\circ (X\ot \Delta)$, whence there exists a unique morphism $\tau: T \to E$ such that $e \circ \tau = \lambda$. On the other hand, since
\begin{multline*}
\rho_X \circ (X \ot \varepsilon) \circ e = (X \bul H \ot \varepsilon)\circ (\rho_X \ot H) \circ e \\
= (X \bul H \ot \varepsilon)\circ (\pi_X \ot H) \circ (X \ot \Delta) \circ e = \pi_X \circ e,
\end{multline*}
there exists a unique morphism $\tau': E \to T$ such that $\lambda\circ \tau' = e$ and $\varpi\circ \tau' = (X \ot \varepsilon)\circ e$. Being $\lambda$ a monomorphism in $\cC$, from $\lambda \circ \tau'\circ \tau = e \circ \tau = \lambda$ it follows that $\tau'\circ \tau = \id_{T}$. Being $e$ a monomorphism in $\cC$, from $e \circ \tau \circ \tau' = \lambda \circ \tau' = e$ it follows that $\tau \circ \tau' = \id_E$ as well, thus proving that $E\cong T$.
\end{proof}

Let $(X,X\bul H,\pi_X,\rho_X)$ be a geometric partial $H$-comodule in $\cC$ and let $(Y_X,\delta)$ be the global comodule from Theorem \ref{thm:globalization}\ref{item:glob1}, i.e. $Y_X$ is the equalizer of the diagram \eqref{eq:glob} in $\com{H}$. Remark that by Proposition \ref{prop:superthm}, the object $T$ is the equalizer of the same diagram \eqref{eq:abelian0}, but in the underlying abelian category $\cC$. Consequently, the universal property of the equalizer induces a morphism $\xi:Y_X \to T$ in $\cC$. With this notation we have the following result.

\begin{corollary}\label{cor:globAb}
Let $(X,X\bul H,\pi_X,\rho_X)$ be a geometric partial $H$-comodule in $\cC$ and let $(Y_X,\delta)$ be the equalizer of the diagram \eqref{eq:glob} in $\com{H}$. 
If the canonical morphism $\xi:Y_X \to T$ 
is an epimorphism in $\cC$, then $Y_X$ is the globalization of $X$.
\end{corollary}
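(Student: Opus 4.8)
The plan is to verify that the two conditions of Theorem~\ref{thm:globalization} are met. Condition~\ref{item:glob1} holds by hypothesis, since $(Y_X,\delta)$ is assumed to be the equalizer of \eqref{eq:glob} in $\com{H}$; so the whole problem reduces to checking condition~\ref{item:glob3}, i.e.\ that diagram~\eqref{eq:GXglob} is a pushout square in $\cC$. Granting this, Theorem~\ref{thm:globalization} then gives that $\epsilon_X=(X\ot\varepsilon)\circ\kappa$ is an epimorphism, that $\kappa=(\epsilon_X\ot H)\circ\delta$, and that $(Y_X,\epsilon_X)$ is the globalization of $X$, which is exactly the claim.

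The first point I would establish is that the defining pullback square of $T$ (from Proposition~\ref{prop:superthm}) is simultaneously a pushout square in $\cC$. Indeed, counitality~\ref{item:QPC1} provides $X\bul\varepsilon$ with $(X\bul\varepsilon)\circ\rho_X=\id_X$, so $\rho_X$ is a split monomorphism, while $\pi_X$ is an epimorphism by hypothesis; hence Lemma~\ref{lemma:A1} applies to that square and makes it cocartesian. In particular the pair $\{\rho_X,\pi_X\}$ is jointly epimorphic. I would also record two identities: composing the pullback relation $\rho_X\circ\varpi=\pi_X\circ\lambda$ with $X\bul\varepsilon$ and using both triangles of the counitality diagram yields $\varpi=(X\ot\varepsilon)\circ\lambda$; and, by the very construction of $\xi$ as the morphism induced by the universal property of the equalizer $(T,\lambda)$ of \eqref{eq:abelian0}, we have $\lambda\circ\xi=\kappa$, hence also $\varpi\circ\xi=(X\ot\varepsilon)\circ\lambda\circ\xi=(X\ot\varepsilon)\circ\kappa=\epsilon_X$. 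Thus diagram~\eqref{eq:GXglob} is obtained from the (now known to be) pushout square of $T$ simply by precomposing its two legs $\varpi$ and $\lambda$ with $\xi$.

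It then suffices to note that precomposing a pushout cocone with an epimorphism produces again a pushout cocone. Concretely, given $f:X\to Z$ and $g:X\ot H\to Z$ with $g\circ\kappa=f\circ\epsilon_X$, we rewrite this as $g\circ\lambda\circ\xi=f\circ\varpi\circ\xi$ and cancel the epimorphism $\xi$ to obtain $g\circ\lambda=f\circ\varpi$; the pushout property of the $T$-square then yields a unique $h:X\bul H\to Z$ with $h\circ\rho_X=f$ and $h\circ\pi_X=g$, and the uniqueness of such an $h$ as a factorization of the cocone on diagram~\eqref{eq:GXglob} follows from $\{\rho_X,\pi_X\}$ being jointly epimorphic. (Equivalently: in an abelian category the pushout of a span is the cokernel of the induced map into the direct sum, and the image of that map is unchanged under precomposition with an epimorphism.) This establishes condition~\ref{item:glob3}, and the corollary follows from Theorem~\ref{thm:globalization}.

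I do not expect a genuine obstacle here: the argument is driven entirely by the single observation that Lemma~\ref{lemma:A1} upgrades the defining pullback of $T$ into a pushout, after which the hypothesis that $\xi$ be epimorphic is precisely what transports this pushout property from $T$ to $Y_X$. The only care needed is in bookkeeping the maps $\varpi,\lambda,\kappa,\xi,\epsilon_X$ and in verifying the split monomorphy of $\rho_X$, which is the point where counitality~\ref{item:QPC1} enters.
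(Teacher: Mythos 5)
Your proof is correct and follows essentially the same route as the paper: Lemma \ref{lemma:A1} upgrades the pullback square defining $T$ to a pushout, and the assumption that $\xi$ is an epimorphism transports this pushout property to the square \eqref{eq:GXglob}, after which Theorem \ref{thm:globalization} gives the conclusion. The only cosmetic point is that Lemma \ref{lemma:A1} should literally be invoked with the leg $\lambda$ (which is monic as the pullback of the split monomorphism $\rho_X$) together with the epimorphism $\pi_X$, rather than with $\rho_X$ itself; your explicit bookkeeping (in particular $\varpi\circ\xi=\epsilon_X$ and $\lambda\circ\xi=\kappa$, and the cancellation of the epimorphism $\xi$) is exactly what the paper's proof leaves implicit.
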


\begin{proof}
Consider the commutative diagram
\[
\newdir{ >}{{}*!/-10pt/@{>}}
\xymatrix @!0 @C=60pt @R=35pt{
 & Y_X \ar[ddl]_-{\epsilon_X} \ar[ddr]^-{\kappa} \ar@{->>}[d]^-{\xi} & \\
 & T \ar[dl]^-{\varpi} \ar@{ >->}[dr]_-{\lambda}  \ar@{}[d]|<<<{\pullback} & \\
X \ar@{ >->}[dr]_-{\rho_X} & & X \ot H \ar@{->>}[dl]^-{\pi_X} \\
 & X \bul H \ar@{}[uu]|-{(a)} & 
}
\]
By Lemma \ref{lemma:A1}, the square $(a)$ is a pushout square, and since $\xi$ is an epimorphism, it follows that the outer square is a pushout as well, making of $Y_X$ the globalization as claimed. 
\end{proof}

From Corollary \ref{cor:globAb} it is clear that to know if geometric partial comodules are globalizable, we should be able to compare equalizers in $\Com^H$ and $\cC$. We call an object $C$ in an abelian monoidal category $\cC$ {\em left flat} if the endofunctor $-\ot C:\cC\to\cC$ preserves equalizers (equivalently, $- \ot C$ is left exact, i.e.~it preserves finite limits, since the preservation of finite (bi)products is automatic for additive functors).
Let us therefore recall the following known (folklore) facts (the proof is based on \cite[3.4]{Wis}).

\begin{proposition}\label{Hflat} Let $H$ be a coalgebra in an abelian monoidal category $\cC$. Then the category $\com{H}$ has finite colimits and finite biproducts, and the forgetful functor $\cF:\com{H}\to \cC$ preserves them. Moreover, if one of the following equivalent conditions holds:
\begin{enumerate}[label=(\arabic*),ref=(\arabic*)]
\item\label{item:abelian1} $\cF$ preserves equalizers;
\item\label{item:abelian3} $\cF$ creates equalizers;
\item\label{item:abelian4} $H$ is left flat in $\cC$;
\end{enumerate}
then $\com{H}$ is abelian as well.  In addition, if $- \ot H$ preserves coequalizers (equivalently, it is right exact, i.e.~it preserves finite colimits -- this is the case, for example, when $\cC$ is closed monoidal), then any of the foregoing conditions \ref{item:abelian1} - \ref{item:abelian4} is equivalent to:
\begin{enumerate}[resume*]
\item\label{item:abelian2} $\cF$ preserves monomorphisms.
\end{enumerate}
\end{proposition}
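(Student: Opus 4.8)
The plan is to establish the claimed equivalences in \Cref{Hflat} in two blocks. First I would address the unconditional statements: the category $\com{H}$ has finite colimits and finite biproducts, and $\cF$ preserves them. For biproducts this is routine: if $(M,\delta_M)$ and $(N,\delta_N)$ are $H$-comodules, then $M\oplus N$ in $\cC$ carries the coaction $\delta_M\oplus\delta_N$ (using that $-\ot H$ is additive, so $(M\oplus N)\ot H\cong (M\ot H)\oplus(N\ot H)$), and one checks this is the biproduct in $\com{H}$. For coequalizers, given a parallel pair $f,g:M\to N$ in $\com{H}$, form the coequalizer $q:N\to Q$ in $\cC$; then $(q\ot H)\circ\delta_N\circ f=(q\ot H)\circ(f\ot H)\circ\delta_M$ and similarly with $g$, and since $(q\ot H)\circ\delta_N$ coequalizes $f,g$ there is a unique $\delta_Q:Q\to Q\ot H$ with $\delta_Q\circ q=(q\ot H)\circ\delta_N$. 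Counitality of $\delta_Q$ follows by precomposing with the epimorphism $q$; coassociativity needs that $q\ot H\ot H$ (or rather that $(-\ot H)\ot H$ applied to $q$, i.e.\ $q$ tensored twice) is still epi, which holds because $-\ot H$ preserves coequalizers hence epimorphisms. Universality of $(Q,\delta_Q)$ in $\com{H}$ is then immediate, and by construction $\cF$ preserves it.

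Next I would prove the equivalence of \ref{item:abelian1}, \ref{item:abelian3}, \ref{item:abelian4} and that any of them forces $\com{H}$ to be abelian. The implication \ref{item:abelian3}$\Rightarrow$\ref{item:abelian1} is trivial. For \ref{item:abelian1}$\Rightarrow$\ref{item:abelian4}: given a parallel pair in $\cC$ with equalizer $(E,e)$, I want to show $-\ot H$ sends it to an equalizer; the trick (this is the Wisbauer-style argument referenced as \cite[3.4]{Wis}) is to realize an arbitrary equalizer $E\xrightarrow{e}A\rightrightarrows B$ in $\cC$ as being computed inside $\com{H}$ via the cofree comodule functor $-\ot H$, which is right adjoint to $\cF$: the cofree comodules $A\ot H$, $B\ot H$ are objects of $\com{H}$, and the equalizer in $\com{H}$ of $A\ot H\rightrightarrows B\ot H$ of the induced maps is, by \ref{item:abelian1}, computed in $\cC$; chasing adjunction/counit identities and the fact that $\cF$ preserves it identifies $E\ot H$ with an equalizer. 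For \ref{item:abelian4}$\Rightarrow$\ref{item:abelian3}: if $H$ is left flat, given a parallel pair in $\com{H}$ take its equalizer $(E,e)$ in $\cC$; flatness gives that $E\ot H$ is the equalizer in $\cC$ of the pair tensored with $H$, which lets one produce $\delta_E:E\to E\ot H$ by the universal property (the composite $(\delta_A\text{ restricted})$ equalizes the relevant pair after tensoring), verify the comodule axioms again using flatness, and check this is the equalizer in $\com{H}$ — i.e.\ $\cF$ creates it. Once $\com{H}$ has all finite limits (equalizers plus biproducts) and finite colimits, and the forgetful functor to the abelian category $\cC$ is exact and faithful, standard arguments show $\com{H}$ is abelian (kernels and cokernels exist and the canonical map from coimage to image is an iso because it is so after applying the faithful exact $\cF$).

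For the final equivalence involving \ref{item:abelian2}, under the extra hypothesis that $-\ot H$ preserves coequalizers (hence is right exact, hence preserves epimorphisms and cokernels): \ref{item:abelian4}$\Rightarrow$\ref{item:abelian2} is clear since left flat means left exact, and left exact functors preserve monomorphisms, so $\cF$ composed with nothing — rather, a monomorphism $m$ in $\com{H}$ is in particular such that its kernel (computed in $\com{H}$, which exists and is preserved by $\cF$) is zero, and one deduces $\cF(m)$ is monic; more directly, once $\com{H}$ is abelian, $\cF$ preserving kernels gives it preserving monos. For the converse \ref{item:abelian2}$\Rightarrow$\ref{item:abelian1}: this is where right-exactness of $-\ot H$ enters essentially. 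An equalizer in an abelian category can be written as a kernel: the equalizer of $f,g:A\to B$ is the kernel of $f-g$. So I must show $\cF$ preserves kernels given that it preserves monomorphisms and cokernels (the latter from the coequalizer-preservation hypothesis applied to $\com{H}$, whose coequalizers $\cF$ preserves, combined with right-exactness of $-\ot H$). In an abelian category $\ker(h)$ is computed as the kernel of the monomorphism part in the epi-mono factorization, equivalently $\ker(h)=\coker(\coker(h)\text{-}\mathrm{related})$ — more cleanly, use that in an abelian category $\ker(h)\to A\to \im(h)$ is exact and $\im(h)=\ker(\coker h)$; since $\cF$ preserves cokernels it preserves $\coker h$, and since $\cF$ preserves monos it preserves the inclusion $\im(h)\hookrightarrow B$, and a short diagram chase in the abelian category $\cC$ recovers $\cF(\ker h)$ as the honest kernel. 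The main obstacle I expect is precisely this last step: making the passage between ``preserves monos + preserves cokernels'' and ``preserves kernels (equalizers)'' watertight, since it genuinely uses that $\com{H}$ is already known to be abelian and that $-\ot H$ is right exact, and one must be careful that the epi-mono factorization in $\com{H}$ is sent by $\cF$ to the epi-mono factorization in $\cC$ (epis are preserved because $\cF$ preserves coequalizers, monos by hypothesis, and these factorizations are unique).
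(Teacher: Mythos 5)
Most of your argument tracks the paper's: the unconditional part (creation of finite colimits and biproducts by $\cF$), the cycle $(3)\Rightarrow(1)\Rightarrow(4)\Rightarrow(3)$ via the right adjoint $-\ot H:\cC\to\com{H}$, and $(1)\Rightarrow(2)$ via ``monos in the abelian category $\com{H}$ are equalizers'' are all essentially the paper's steps. Two remarks on that part. First, in your construction of coequalizers you invoke ``$-\ot H$ preserves coequalizers hence epimorphisms'' to get coassociativity of $\delta_Q$; that hypothesis is not available in the unconditional part of the statement, and it is also unnecessary: both sides of the coassociativity identity are morphisms \emph{out of} $Q$, so precomposing with the epimorphism $q$ alone suffices to compare them.

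The genuine gap is in your converse $(2)\Rightarrow(1)$. Your plan is to write the equalizer as a kernel and deduce preservation of kernels from preservation of monomorphisms and cokernels, using the epi--mono factorization $A\twoheadrightarrow \im(h)\hookrightarrow B$ \emph{in} $\com{H}$ and the identity $\im(h)=\Ker{\coker(h)}$ \emph{in} $\com{H}$. But under hypothesis \ref{item:abelian2} alone you do not yet know that $\com{H}$ is abelian, nor even that it has kernels or epi--mono factorizations: those facts are established only as consequences of \ref{item:abelian1}--\ref{item:abelian4}, which is precisely what you are trying to prove. Concretely, even after replacing $\im(h)$ by the cokernel-based factorization $h=m\circ\coker(\Ker{h})$ (which does exist, since $\com{H}$ has cokernels), you would still need the induced map $m$ to be a monomorphism in $\com{H}$ in order to apply \ref{item:abelian2} to it and conclude $\Ker{\cF(h)}=\Ker{\cF(\coker(\Ker{h}))}$; that is exactly the abelian axiom for $\com{H}$, so the argument is circular. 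You flag this yourself as ``the main obstacle,'' but it is not a technicality to be tightened --- it is why the paper takes a different route. The paper proves $(2)\Rightarrow(4)$ instead, never touching the internal structure of $\com{H}$: a monomorphism $f$ in $\cC$ is a kernel in $\cC$, the right adjoint $-\ot H:\cC\to\com{H}$ sends it to a kernel, hence a monomorphism, in $\com{H}$, and \ref{item:abelian2} pushes it back down to a monomorphism in $\cC$; then the standard fact that a right exact additive endofunctor of an abelian category preserving monomorphisms is exact (applied to $-\ot H:\cC\to\cC$, where $\cC$ \emph{is} known to be abelian) yields left flatness. I would recommend replacing your $(2)\Rightarrow(1)$ step by this argument.
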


\begin{proof}
For the first statement, recall from (the dual verion of) \cite[Proposition 4.3.1]{Borceux2} that for any coalgebra in any monoidal category, the forgetful functor $\cF$ creates colimits. Being $\cC$ abelian, this implies that $\com{H}$ is finitely cocomplete. Using the fact that finite coproducts in $\cC$ are biproducts, it is then a standard verification that the canonical projections are colinear and hence finite coproducts $\com{H}$ are biproducts as well. 

Suppose then that $\cF$ creates equalizers. Since $\cC$ is abelian, and so it has all equalizers, it follows that $\com{H}$ has all equalizers. Combined with the above, this tells us that $\com{H}$ is finitely complete and cocomplete, it has biproducts and that $\cF$ preserves all of these. In particular, kernels and cokernels exist in $\com{H}$ and they can be computed in $\cC$. As $\cC$ is abelian, $\com{H}$ will be abelian as well.

Regarding the equivalent statements, it follows directly from (the dual of) \cite[Proposition 4.3.2]{Borceux2} that \ref{item:abelian4} implies \ref{item:abelian3}. By definition, \ref{item:abelian3} implies \ref{item:abelian1}. To see that \ref{item:abelian1} implies \ref{item:abelian4}, consider any equalizer in $\cC$. Since the functor $-\ot H:\cC \to \com{H}$ is a right adjoint to $\cF$, it preserves equalizers. By assumption \ref{item:abelian1}, the resulting equalizer in $\com{H}$ is preserved by the forgetful functor and hence $-\ot H:\cC\to\cC$ preserves equalizers. 

Finally, if we assume \ref{item:abelian1}, then we already know that $\com{H}$ is abelian and so any monomorphism is an equalizer. Thus, \ref{item:abelian1} implies \ref{item:abelian2}. If instead we assume \ref{item:abelian2}, then we can deduce as above that $- \ot H: \cC \to \cC$ preserves monomorphisms: if $f$ is a monomorphism in $\cC$, then $f$ is a kernel in $\cC$ and so $f \ot H$ is a kernel (whence, a monomorphism) in $\com{H}$. Therefore, $f \ot H$ is a monomorphism in $\cC$. Now, it is a well-known fact that a functor between abelian categories is exact (i.e.~it is left and right exact) if and only if it is right exact and it preserves monomorphisms. 
Hence, under the additional hypothesis that $- \ot H$ preserves coequalizers, \ref{item:abelian2} implies \ref{item:abelian4}.
\end{proof}

Let us remark that it is possible that $\com{H}$ has all equalizers even when $H$ is not left flat. In \cite{Porst} it is indeed shown that when $\cC$ is of the form $\Rmod{k}$ for a commutative ring $k$ or $\Bimod{A}$ for a (possibly non-commutative) ring $A$, $\com{H}$ is always complete. Moreover, in \cite[Example 1.1]{LaiachiPepeLobillo} an example of a non-flat coring is presented whose category of comodules is Abelian (even Grothendieck).

\begin{theorem}\label{thm:uffa}
Let $\cC$ be an abelian monoidal category and let $H$ be a coalgebra in $\cC$ which is left flat. 
Then $\gparcom{H}_{gl} = \gparcom{H}$.
\end{theorem}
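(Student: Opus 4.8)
The plan is to reduce everything to Corollary~\ref{cor:globAb}: given a geometric partial $H$-comodule $(X,X\bul H,\pi_X,\rho_X)$ in $\cC$, we must show that the canonical morphism $\xi:Y_X\to T$ is an epimorphism in $\cC$, where $(Y_X,\delta)$ is the equalizer of \eqref{eq:glob} in $\com{H}$ and $T$ is the equalizer of the same parallel pair \eqref{eq:abelian0} computed in $\cC$. In fact, I expect $\xi$ to be an \emph{isomorphism}. The key point is that, by Proposition~\ref{Hflat}, left flatness of $H$ is equivalent to the forgetful functor $\cF:\com{H}\to\cC$ creating equalizers; in particular $\cF$ preserves equalizers. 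So first I would observe that the parallel pair in \eqref{eq:glob} is a diagram in $\com{H}$ whose underlying diagram in $\cC$ is exactly the parallel pair $\rho_X\ot H,\ (\pi_X\ot H)\circ(X\ot\Delta):X\ot H\rightrightarrows X\bul H\ot H$ appearing in \eqref{eq:abelian0} — here one uses that the cofree comodule structure on $X\ot H$ is $X\ot\Delta$, that on $X\bul H\ot H$ is $X\bul H\ot\Delta$, and that both $\rho_X\ot H$ and $(\pi_X\ot H)\circ(X\ot\Delta)$ are comodule morphisms for these structures (which is part of the setup of Theorem~\ref{thm:globalization}).

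Granting that, the equalizer $(Y_X,\kappa)$ of the $\com{H}$-diagram, being preserved by $\cF$, is also the equalizer in $\cC$ of the underlying parallel pair. On the other hand, by Proposition~\ref{prop:superthm}, $(T,\lambda)$ is \emph{also} an equalizer of that very same parallel pair in $\cC$ (this is where the hypothesis that $X$ is a genuine geometric partial comodule, not merely a datum satisfying counitality, is used). Hence $Y_X$ and $T$ are two equalizers of the same diagram in $\cC$, so the canonical comparison $\xi:Y_X\to T$ is an isomorphism, in particular an epimorphism. Then Corollary~\ref{cor:globAb} applies verbatim and $Y_X$ is the globalization of $X$. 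Since $X$ was arbitrary, $\gparcom{H}=\gparcom{H}_{gl}$.

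One subtlety to handle carefully: Corollary~\ref{cor:globAb} and Theorem~\ref{thm:globalization}\ref{item:glob1} presuppose that the equalizer \eqref{eq:glob} exists in $\com{H}$. This is exactly guaranteed by left flatness: Proposition~\ref{Hflat} tells us that under condition~\ref{item:abelian4} the category $\com{H}$ is abelian, hence has all equalizers. So I would state that step explicitly at the start. The other thing to spell out is the identification of the comodule structures on the three objects $X\ot H$, $X\bul H\ot H$, $Y_X$ so that "the underlying diagram of \eqref{eq:glob} is \eqref{eq:abelian0}" is not a leap — but this is essentially bookkeeping contained already in the formulation of Theorem~\ref{thm:globalization}, since there the equalizer is literally taken in $\com{H}$ with those cofree structures.

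The main obstacle, such as it is, is purely one of careful citation rather than of genuine mathematical difficulty: making sure that "equalizer in $\com{H}$" and "equalizer in $\cC$ of the forgetful diagram" really do coincide, i.e.\ that no additional hypothesis beyond flatness is silently needed, and that Proposition~\ref{prop:superthm}'s $T$ and Theorem~\ref{thm:globalization}'s $Y_X$ are comparison-compatible with the $\xi$ of Corollary~\ref{cor:globAb} (which is how $\xi$ was defined in the paragraph preceding that corollary). Once those identifications are in place, the proof is a two-line consequence: $\xi$ is an isomorphism because both source and target are equalizers of the same $\cC$-diagram, so it is epi, so Corollary~\ref{cor:globAb} finishes it.
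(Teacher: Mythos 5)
Your proposal is correct and follows essentially the same route as the paper: Proposition \ref{Hflat} gives existence of the equalizer \eqref{eq:glob} in $\com{H}$ and its preservation by the forgetful functor, Proposition \ref{prop:superthm} identifies $T$ as the equalizer of the same underlying parallel pair in $\cC$, so $\xi$ is an isomorphism and Corollary \ref{cor:globAb} concludes. Your added bookkeeping about the cofree comodule structures is exactly the implicit content of the paper's phrase ``its object part $Y_X$ can be computed in $\cC$.''
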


\begin{proof}
By proposition \ref{Hflat},  the equalizer \eqref{eq:glob} exists in $\com{H}$ and its object part $Y_X$ can be computed in $\cC$. Hence $Y_X$ is canonically isomorphic to the object $T$ from Proposition \ref{prop:superthm}. Therefore, the result follows immediately from Corollary \ref{cor:globAb}.
\end{proof}

Our next result is an improvement of Theorem \ref{globcov} in the abelian case. 

\begin{theorem}\label{thm:abEquiv}
For a coalgebra $(H,\Delta,\varepsilon)$ in an abelian monoidal category $\cC$ such that $\com{H}$ admits the equalizer \eqref{eq:glob} for every partial comodule $X$ and $\com{H} \to \cC$ preserves them, every proper global cover is minimal. Consequently, the equivalence of Theorem \ref{globcov} induces an equivalence between the categories $\gparcom{H}$ and $\Cov^{H}_{pr}$.
\end{theorem}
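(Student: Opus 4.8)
The plan is to deduce the statement from Theorem \ref{globcov} by showing that, under the present hypotheses, its source and target grow to all of $\Cov^{H}_{pr}$ and all of $\gparcom{H}$ respectively: concretely, that $\gparcom{H}_{gl} = \gparcom{H}$ and that $\Cov^{H}_{pr,min} = \Cov^{H}_{pr}$. Granting both, the equivalence $\Cov^{H}_{pr,min} \simeq \gparcom{H}_{gl}$ implemented by $\Ind$ and ${\sf Gl}$ becomes the asserted equivalence $\Cov^{H}_{pr} \simeq \gparcom{H}$.

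First I would check that $\gparcom{H}_{gl} = \gparcom{H}$, running the proof of Theorem \ref{thm:uffa} with flatness replaced by the hypothesis at hand. Given a geometric partial comodule $X$, the equalizer $(Y_X,\delta)$ of \eqref{eq:glob} exists in $\com{H}$ by assumption, and since $\cF \colon \com{H} \to \cC$ preserves it, the underlying object of $Y_X$ is the equalizer in $\cC$ of the parallel pair $\rho_X \ot H$ and $(\pi_X \ot H)\circ(X \ot \Delta)$. By Proposition \ref{prop:superthm} this coincides with the pullback $T$, so the canonical comparison $\xi \colon Y_X \to T$ is an isomorphism, in particular an epimorphism; Corollary \ref{cor:globAb} then gives that $Y_X$ is the globalization of $X$. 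In passing, this records that under these hypotheses the underlying object of $\cG(X)$ is the pullback of $\rho_X$ and $\pi_X$, a fact I shall reuse below.

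Next I would prove that every proper cover $(Y,X,p)$ is minimal, by identifying it with ${\sf Gl}(\Ind(Y,X,p))$, which is a minimal proper cover by Theorem \ref{globcov}; since minimality is invariant under isomorphism of covers, this closes the argument. Put $X' \coloneqq \Ind(Y,X,p)$, globalizable by the previous step. By the induction construction, $p$ is a morphism of partial comodules $\cI(Y) \to X'$, and the square \eqref{eq:globcom} presenting $X'\bul H$ --- with apex $Y$ and legs $p$ and $(p \ot H)\circ\delta_Y$ --- is a pushout in $\cC$. Since $\pi_{X'}$ is an epimorphism and $(p \ot H)\circ\delta_Y$ is a monomorphism, Lemma \ref{lemma:A1} turns this pushout into a pullback, exhibiting $Y$ as a pullback of $\rho_{X'} \colon X \to X'\bul H$ and $\pi_{X'} \colon X\ot H \to X'\bul H$ --- the very pullback which, by the previous step, underlies $\cG(X')$. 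Comparing the two I obtain a canonical isomorphism $F \colon Y \to \cG(X')$ in $\cC$ with $\epsilon_{X'}\circ F = p$ and $\kappa_{X'}\circ F = (p \ot H)\circ\delta_Y$. That $F$ is colinear follows from the universal property \ref{item:GL3} of the globalization: the unique colinear lift $\tilde F$ of $p$ satisfies $\kappa_{X'}\circ\tilde F = (\epsilon_{X'}\ot H)\circ\delta_{\cG(X')}\circ\tilde F = ((\epsilon_{X'}\circ\tilde F)\ot H)\circ\delta_Y = (p\ot H)\circ\delta_Y$, hence agrees with $F$ by uniqueness of the pullback comparison. As $\cF$ is conservative, $F$ is then an isomorphism of $H$-comodules, so $(Y,X,p) \cong (\cG(X'),X,\epsilon_{X'}) = {\sf Gl}(X')$ as covers; combining with the first step this gives $\Cov^{H}_{pr} \simeq \gparcom{H}$.

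The step I expect to be the main obstacle is the claim that $(p\ot H)\circ\delta_Y$ is a monomorphism \emph{in $\cC$}, which is exactly what Lemma \ref{lemma:A1} requires: properness of the cover only guarantees that it is a monomorphism in $\com{H}$, so one must transfer this along $\cF$. When $H$ is left flat this is immediate from Proposition \ref{Hflat}\ref{item:abelian2}; in the present generality it has to be wrung out of the standing hypothesis that $\cF$ preserves the equalizers \eqref{eq:glob} (equivalently, that it identifies $\cG(X')$ with the pullback $T$), which is precisely what controls how monomorphisms pass through the forgetful functor. The rest is formal bookkeeping about the restriction of $\Ind$ and ${\sf Gl}$ to the larger subcategories.
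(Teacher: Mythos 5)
Your proposal follows essentially the same route as the paper: first $\gparcom{H}_{gl}=\gparcom{H}$ via Corollary \ref{cor:globAb} (the preserved equalizer of \eqref{eq:glob} coincides with the pullback $T$ of Proposition \ref{prop:superthm}), and then Lemma \ref{lemma:A1} turns the pushout \eqref{eq:pushglob} attached to a proper cover into a pullback, identifying the cover with the (minimal) globalization $(Y_X,X,\epsilon_X)$. The point you flag as the main obstacle --- transferring the monomorphism property of $(p\ot H)\circ\delta_Y$ from $\com{H}$ to $\cC$ --- is not treated any more explicitly in the paper's own proof, which simply invokes that this map is a monomorphism when applying Lemma \ref{lemma:A1}; apart from your added colinearity check of the comparison map, your argument matches the paper's.
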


\begin{proof}
First of all, Corollary \ref{cor:globAb} 
entails that the full subcategory of globalizable geometric partial comodules $\gparcom{H}_{gl}$ coincides with the ambient category $\gparcom{H}$. 
Moreover, if we consider a proper global covering $(Y,X,p)$ and we perform the pushout
\begin{equation}\label{eq:pushglob}
\begin{gathered}
\newdir{ >}{{}*!/-10pt/@{>}}
\xymatrix @!0 @R=25pt @C=45pt{
 & Y \ar@{->>}[dl]_-{p} \ar@{ >->}[dr]^(0.55){\,\ (p \ot H)\circ \delta_Y} & \\
X \ar@{ >->}[dr]_-{\rho_X} & & X\otimes H \ar@{->>}[dl]^(0.4){\pi_X} \\
 & X\bullet H \ar@{}[u]|<<<{\pushout}
}
\end{gathered}
\end{equation}
then, in view of the fact that $(p \ot H)\circ \delta_Y$ is a monomorphism 
and of Lemma \ref{lemma:A1}, \eqref{eq:pushglob} is also a pullback diagram. By Proposition \ref{prop:superthm},  
this implies that the canonical morphism 
$(\eta_Y,\id_X):(Y,X,p) \to (Y_X,X,\epsilon_X)$ is in fact an isomorphism 
and hence every proper cover is also minimal, as $Y_X$ is minimal.
\end{proof}

\begin{remark}
Notice that, despite $\gparcom{H}_{gl} = \gparcom{H}$ and $\Cov^{H}_{pr,min} = \Cov^{H}_{pr}$, over an abelian category $\cC$ as in Theorem \ref{thm:abEquiv}, it is not necessarily true that every global covering is proper. If we consider the group-like bialgebra $\C[X]$ and the projection $p : \C[X] \to \C, X^n \mapsto \delta_{0,n}$, then $(\C[X],\C,p)$ is a well-defined non-proper global covering.
\end{remark}

In the framework of classical (co)algebras over a commutative ring, the following restatement of Theorem \ref{thm:abEquiv} explains why the category of geometric partial (co)modules contains more information than the category of ordinary (co)modules.

\begin{corollary}\label{gpcalgcoalg}
Let $k$ be a commutative ring. 
\begin{enumerate}[label=(\arabic*),ref=(\arabic*),leftmargin=1cm]
\item If $A$ is a $k$-algebra, then (left) geometric partial modules over $A$ can be identified with pairs $(M,V)$ where $M$ is a (global) $A$-module and $V$ is a chosen generating $k$-submodule of $M$. Morphisms of geometric partial modules are $A$-linear morphisms of the corresponding $A$-modules which map generating submodules to generating submodules.
\item\label{kmod:item2} If $C$ is a $k$-coalgebra that is flat as left $k$-module, then (right) geometric partial comodules over $C$ can be identified with pairs $(M,V)$ where $M$ is a (global) $C$-comodule together with a chosen $k$-submodule $V$ satisfying $\delta(M)\cap (V\ot C)=0$. Morphisms of geometric partial comodules are $C$-colinear morphisms between the corresponding $C$-comodules which restrict to the chosen $k$-submodules. \\
Equivalently, they can be described as global $C$-comodules $M$ together with a chosen co-generating quotient $k$-module $N$. In this case, morphisms of geometric partial comodules are $C$-colinear morphisms between the corresponding $C$-comodules which factors through the co-generating quotient $k$-modules.
\end{enumerate}
\end{corollary}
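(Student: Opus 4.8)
This corollary is a concrete rereading of Theorem~\ref{thm:abEquiv}, so the plan is to apply that theorem in the two relevant base categories and then unwind, by hand, the resulting equivalence $\gparcom{H}\simeq\Cov^{H}_{pr}$ (the one coming from the functors $\Ind$ and $\mathsf{Gl}$ of Theorem~\ref{globcov}) into the stated description of objects and morphisms.

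I would start with \emph{part~(2)}, the comodule case, which is the direct one. Take $\cC=\Rmod{k}$ and $H=C$: this $C$ is left flat precisely because it is flat as a $k$-module, so by Proposition~\ref{Hflat} the category $\com{C}$ is abelian and the forgetful functor $\cF:\com{C}\to\Rmod{k}$ creates (hence preserves) equalizers; thus Theorem~\ref{thm:abEquiv} applies and yields $\gparcom{C}\simeq\Cov^{C}_{pr}$. It then remains to spell out ``proper global cover'': such a datum $(Y,X,p)$ is a $C$-comodule $M:=Y$ together with a surjective $k$-linear map $p:M\to X$ --- equivalently $M$ together with the quotient $k$-module $N:=X$ and its kernel $V:=\Ker{p}$ --- and properness says that $(p\ot C)\circ\delta_{M}:M\to X\ot C$ is a monomorphism in $\com{C}$. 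Since $\cF$ preserves monomorphisms ($\com{C}$ being abelian) this is equivalent to $(p\ot C)\circ\delta_{M}$ being injective in $\Rmod{k}$; now flatness of $C$ gives $\Ker{p\ot C}=V\ot C$, and since $\delta_M$ is always injective (counitality) one computes $\Ker{(p\ot C)\circ\delta_{M}}=\delta_M^{-1}(V\ot C)$, so properness amounts exactly to $\delta_{M}(M)\cap(V\ot C)=0$. Finally, a morphism of covers unwinds to a $C$-colinear map $F:M\to M'$ with $F(V)\subseteq V'$, which is the same datum as a $C$-colinear map descending to the quotients $N\to N'$; this produces simultaneously the $k$-submodule description and the equivalent co-generating-quotient description, together with their morphisms.

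\emph{Part~(1)} is the formal dual of part~(2): a (left) geometric partial $A$-module is, by definition, the span--pullback analogue of a geometric partial comodule, i.e.\ an object of the opposite of the category $\gparcom{A}$ of geometric partial comodules over $A$ formed in $\op{\Rmod{k}}$ (with $A$ regarded there as a coalgebra). In $\op{\Rmod{k}}$ every object is left flat, since $-\ot A$ is right exact on $\Rmod{k}$ and hence preserves equalizers in the opposite category, so Theorem~\ref{thm:abEquiv} applies with no flatness hypothesis on $A$ and gives $\gparcom{A}(\op{\Rmod{k}})\simeq\Cov^{A}_{pr}(\op{\Rmod{k}})$. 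Dualising the description of the latter: a proper cover in $\op{\Rmod{k}}$ is a global $A$-comodule in $\op{\Rmod{k}}$ --- that is, a global $A$-module $M$ --- equipped with a $k$-linear monomorphism $V\hookrightarrow M$, and the properness condition dualises to the assertion that the $A$-linear map $V\ot A\to M$ induced by the action is surjective, i.e.\ that $V$ generates $M$ as an $A$-module; undoing the outer opposite, a morphism becomes a forward $A$-linear map $M\to M'$ carrying the chosen generating submodule into the chosen generating submodule, as claimed.

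\emph{Expected main obstacle.} There is essentially no computation involved; the two places demanding genuine care are, in part~(2), invoking flatness of $C$ exactly at the identification $\Ker{p\ot C}=V\ot C$ (without it the intersection condition would be the wrong one), and, in part~(1), tracking the two nested ``$\mathrm{op}$''s --- one on the base category $\Rmod{k}$, one on the ambient category of geometric partial modules --- so that epimorphisms, monomorphisms, comodules and cover morphisms are all dualised in the correct direction and properness genuinely becomes ``$V$ generates $M$''. Everything else is a routine unwinding of $\Ind$ and $\mathsf{Gl}$ on objects and morphisms.
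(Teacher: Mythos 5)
Your proposal is correct and follows essentially the same route as the paper: invoke Theorem~\ref{thm:abEquiv} (via Proposition~\ref{Hflat} for the flatness hypotheses, automatic in the opposite category for part~(1)) and then unwind properness of a cover, with the key observation that, by flatness of $C$ and injectivity of $\delta$, the map $(p\ot C)\circ\delta$ is injective if and only if $\delta(M)\cap(\ker(p)\ot C)=0$ — exactly the identification the paper's proof records. Your write-up simply makes explicit the dualization in part~(1) and the morphism bookkeeping that the paper leaves to the reader.
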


\begin{proof}
It follows directly from Theorem \ref{thm:abEquiv}, by spelling out the co-generating condition (and keeping in mind that in \ref{kmod:item2} the coalgebra $C$ is assumed to be $k$-flat). For the first claim in \ref{kmod:item2} one needs, more precisely, to observe also that a global comodule $M$ is co-generated by a quotient $k$-module $N$ via a surjective $k$-linear map $p:M\to N$ if and only if $V \coloneqq \ker (p)$ is such that $\delta(M)\cap (V\ot C)=0$.
\end{proof}


\subsection{Partial modules over Hopf algebras (and dilations)}\label{ssec:hopf}

Assume that $\K$ is a field and that $(H,\mu,u,\Delta,\varepsilon,S)$ is a Hopf $\K$-algebra. Take $\cC=\op{\vectk}$, the opposite of the category of vector spaces over $\K$. It is an abelian monoidal category for which Theorem \ref{thm:uffa} holds for any coalgebra in $\cC$. Since $(H,\mu,u)$ is a coalgebra therein, $\gparcom{H}_{gl} = \gparcom{H}$ in view of \S\ref{ssec:abelian}. Recall the following definition from \cite[Definition 5.1 and Remark 5.2]{ParRep}.

\begin{definition}\label{def:ParRep}
A (left) partial module over $H$ is a vector space $M$ together with a linear map $\lambda:H\ot M \to M, h \otimes m\mapsto h \cdot m$, such that
\begin{enumerate}[leftmargin=1cm] 
\item\label{item:pm1} $1_H\cdot m = m$,
\item\label{item:pm2} $h \cdot \left(k_{(1)} \cdot \left(S(k_{(2)}) \cdot m\right)\right) = (hk_{(1)}) \cdot \left(S(k_{(2)}) \cdot m\right)$,
\item\label{item:pm3} $k_{(1)} \cdot \left( S(k_{(2)})\cdot \left(h \cdot m\right)\right) = k_{(1)} \cdot \left(S(k_{(2)})h \cdot m\right)$,
\end{enumerate}
for all $m\in M$, $h,k\in H$. If $(M, \lambda)$ and $(M', \lambda')$ are partial $H$-modules, then a morphism of partial $H$-modules is a $\K$-linear map $f:M\to M'$ satisfying $f\left(h\cdot m\right) = h\cdot'f(m)$ for all $h \in H, m \in M$. The category whose objects are the partial $H$-modules and whose morphisms are the ones defined above is the category $\ParMod_H$.
\end{definition}

\begin{remark}
For the sake of precision, \cite[Remark 5.2]{ParRep} involves five axioms instead of just the three above, but in fact the missing two are redundant. See \cite[Lemma 2.11]{ParCorep}.
\end{remark}

\begin{proposition}\label{prop:PmodGPmod}
Any partial $H$-module can be endowed with a geometric partial comodule structure over the coalgebra $H$ in the abelian monoidal category $\cC={{\sf Vect}_\K}^{\rm op}$, with
\begin{equation}\label{eq:ParRepBul}
H \bullet M = \left\{\left. \sum_{i} h_i \otimes m_i \in H \otimes M \ \right| \ \sum_i k\cdot\left(h_i\cdot m_i\right) = \sum_i kh_i\cdot m_i \ \forall\, k\in H \right\}.
\end{equation}
This induces a faithful functor
\[
\cP : \ParMod_H \to \op{\left(\gparcom{H}\right)}.
\]
\end{proposition}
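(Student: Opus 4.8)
The plan is to recognize a partial $H$-module $(M,\lambda)$ as a non-coassociative $H$-comodule in $\cC = \op{\vectk}$ and then invoke Theorem~\ref{thm:constructgeometric} together with Remark~\ref{rem:examples}. Concretely, a linear map $\lambda : H \ot M \to M$ in $\vectk$ is precisely a morphism $\partial_M : M \to M \ot H$ in $\cC = \op{\vectk}$ (reversing arrows exchanges $H \ot M \to M$ with $M \to M \ot H$ up to the symmetry $H \ot M \cong M \ot H$, which is harmless since we may read everything in $\vectk$ and dualize at the end). First I would check that axiom~\ref{item:pm1}, namely $1_H \cdot m = m$, says exactly that $(M \ot \varepsilon) \circ \partial_M = \id_M$ in $\cC$, i.e.\ $\partial_M$ is counital; thus $(M,\partial_M) \in \ncom{H}$ in $\cC$. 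Since $\cC = \op{\vectk}$ satisfies the hypothesis of Remark~\ref{rem:examples} (the family $\{Y \ot f \mid f \in \cC(H,\I)\}$ is jointly monic for all $Y$, because this is the dual statement to the jointly-monic family $\{f \ot Y\}$ in $\vectk$ coming from bases), Lemma~\ref{buliscolimit}\ref{item:ncc1} guarantees that the universal arrow $\pi_M : M \ot H \to M \bul H$ coequalizing \eqref{buluniversal} exists, and Theorem~\ref{thm:constructgeometric} then yields the geometric partial $H$-comodule $(M, M\bul H, \pi_M, \pi_M \circ \partial_M)$ in $\cC$.

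Next I would identify $M \bul H$ explicitly and check it agrees with \eqref{eq:ParRepBul}. In $\cC = \op{\vectk}$, the object $M \bul H$ is, dually, a \emph{sub}object of $H \ot M$ in $\vectk$: it is the equalizer in $\vectk$ of the two maps $H \ot M \rightrightarrows H \ot H \ot M$ that are the $\vectk$-duals of $\partial_M \ot H$ and $M \ot \Delta$. Spelling this out, the first map sends $h \ot m \mapsto \sum h_{(1)} \ot h_{(2)} \ot m$ (the comultiplication of $H$) and the second sends $h \ot m \mapsto \sum h \ot k_{(1)} \ot \cdots$; after chasing the arrow reversals carefully, the equalizer is exactly the subspace $\{\sum_i h_i \ot m_i \mid \sum_i k \cdot (h_i \cdot m_i) = \sum_i k h_i \cdot m_i \ \forall k \in H\}$, which is \eqref{eq:ParRepBul}. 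The point is that the defining condition of $H \bul M$ is precisely ``being coequalized'' dualized into ``being equalized'', and the two composites in \eqref{buluniversal} dualize to $(k,x) \mapsto \sum k \cdot(h\cdot m)$ and $(k,x)\mapsto \sum kh \cdot m$.

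For the functoriality statement, I would apply Corollary~\ref{cor:ncoass}: since $\cC = \op{\vectk}$ is cocomplete and has the required jointly-monic families, there is a faithful functor $\cP : \ncom{H} \to \gparcom{H}$ (in $\cC$) commuting with the forgetful functors. It remains to observe that a morphism $f : M \to M'$ of partial $H$-modules, i.e.\ a $\K$-linear map with $f(h \cdot m) = h \cdot' f(m)$, is exactly a morphism in $\ncom{H}$ in $\cC$ (again by arrow reversal: the intertwining condition $\partial_{M'} \circ f = (f \ot H)\circ \partial_M$ in $\cC$ unwinds to $f(h\cdot m) = h\cdot' f(m)$ in $\vectk$). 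Composing the inclusion $\ParMod_H \hookrightarrow \ncom{H}$ (in $\cC$) with $\cP$ gives the desired faithful functor $\cP : \ParMod_H \to \op{(\gparcom{H})}$, where the ${}^{\mathrm{op}}$ on the target records that $\gparcom{H}$ here is built inside $\cC = \op{\vectk}$.

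The main obstacle I anticipate is purely bookkeeping rather than conceptual: keeping track of all the arrow reversals between $\vectk$ and $\op{\vectk}$, and in particular making sure that the two parallel composites in \eqref{buluniversal} really dualize to the two sides of axioms \ref{item:pm2}--\ref{item:pm3} rather than to something weaker. In fact axioms \ref{item:pm2} and \ref{item:pm3} are \emph{not} used to get $(M,\partial_M) \in \ncom{H}$ — only counitality \ref{item:pm1} is needed for that, and the geometric partial comodule axioms \ref{item:QPC1}--\ref{item:QPC2} come for free from Theorem~\ref{thm:constructgeometric}. So one should double-check whether \ref{item:pm2}--\ref{item:pm3} play any role here at all; I expect they are needed precisely to ensure that the explicit description \eqref{eq:ParRepBul} of $H \bul M$ is the \emph{correct} (universal) one and that $\pi_M \circ \partial_M$ lands where claimed, i.e.\ that the partial coaction is well-behaved — but the verification that \eqref{eq:ParRepBul} satisfies the universal property may use \ref{item:pm2}--\ref{item:pm3}, and tracing that dependency is the delicate point.
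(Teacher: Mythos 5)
Your proposal is correct and follows essentially the same route as the paper: regard the partial action as a counital (non-coassociative) coaction of the coalgebra $(H,\mu,u)$ in $\op{\vectk}$, obtain the geometric partial comodule from Theorem \ref{thm:constructgeometric} with $H\bul M$ computed via the joint-equalizer description of Lemma \ref{buliscolimit}\ref{item:ncc1} (which is precisely \eqref{eq:ParRepBul}), and deduce functoriality and faithfulness as in Corollary \ref{cor:ncoass}. Your concern about axioms \ref{item:pm2}--\ref{item:pm3} resolves as you half-suspected: they play no role here, since only unitality \ref{item:pm1} is needed both for $(M,\partial_M)\in\ncom{H}$ and for the universal description of $H\bul M$ --- the only point to tidy up is that $H\bul M$ is not the equalizer of a single pair of maps out of $H\ot M$, but the joint equalizer over all $k\in H$ of the two composites $H\ot M\to M$, $h\ot m\mapsto k\cdot(h\cdot m)$ and $h\ot m\mapsto (kh)\cdot m$, exactly as your final formula records.
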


\begin{proof}
Since a partial $H$-module is, in particular, a counital coaction of the coalgebra $H$ in $\op{\vectk}$ and since $\op{\vectk}$ is a cocomplete category for which Lemma \ref{buliscolimit} 
holds, we can apply Theorem~\ref{thm:constructgeometric} 
to conclude that the first claim holds and  Lemma \ref{buliscolimit} 
itself to explicitly describe $H \bul M$ as the limit of the pairs
\[
\xymatrix @C=25pt{
H \ot M \ar[rr]^-{f \ot H \ot M} && H \ot H \ot M \ar@<+0.5ex>[rr]^-{H \ot \lambda} \ar@<-0.5ex>[rr]_-{\mu \ot M} && H \ot M \ar[r]^-{\lambda} & M
}
\]
for all $f \in \Hom{}{}{\K}{}{\K}{H}$. Concerning the second claim, a morphism of partial modules $f:(M,\lambda) \to (M',\lambda')$ induces a map $H\bul f:H\bul M \to H\bul M'$ which makes diagram \eqref{eq:shield} to commute. Therefore, $\cP(f) = (f, H\bul f)$ is a morphism of partial comodules and $\cP$ is a functor. It is clearly faithful. 
\end{proof}

\begin{definition}[{\cite[Definition 4.1]{AlvesBatistaVercruysse}}]\label{def:dilation}
A dilation for a partial module $(M,\lambda)$ is a triple $(N,T,\theta)$ where: \begin{enumerate*} \item[(i)] $N$ is a global $H$-module with action $\delta:H\ot N\to N, h \ot n \mapsto h \triangleright n$, \item[(ii)] $T$ is a linear endomorphism of $N$ satisfying $T^2 = T$ and\end{enumerate*}
\[
T\left(h_{(1)}\triangleright T\left(S(h_{(2)})\triangleright y\right)\right) = h_{(1)}\triangleright T\left(S(h_{(2)})\triangleright T\left(y\right)\right)
\]
for all $h\in H$, $y\in N$, and \begin{enumerate*} \item[(iii)] $\theta:M \to T(N)$ is an isomorphism of vector spaces satisfying\end{enumerate*}
\begin{equation}\label{eq:thetalin}
\theta\left(h\cdot m\right) = T\left(h \triangleright \theta(m)\right).
\end{equation}
A dilation $(N,T,\theta)$ is called proper if $N$ is generated by $T(N)=\theta(M)$ as an $H$-module and it is called minimal when $N$ does not contain any $H$-submodule that is annihilated by $T$. 
\end{definition}
It is useful to consider the map $\varpi:N\to M, \varpi(n)=\theta^{-1}(T(n))$, so that \eqref{eq:thetalin} becomes
\begin{equation}\label{eq:cdot}
h\cdot m = \varpi\left(h \triangleright \theta(m)\right)
\end{equation}
for all $m\in M$, $h\in H$.

In \cite[Theorem 4.3]{AlvesBatistaVercruysse} it has been proven that every partial module admits a proper and minimal dilation, called the \emph{standard dilation}, which is unique up to isomorphism. Moreover, this construction leads to a functor $\cD: \ParMod_H \to \Rmod{H}$. 

Since $\cC = \op{\vectk}$ is an abelian monoidal category for which 
$\com{H}$ is complete and $\com{H} \to \cC$ preserves limits for every coalgebra $H$,
we know from \S\ref{ssec:abelian} that every geometric partial comodule in $\cC$ is globalizable. Let us now show that, for geometric partial comodules coming from partial $H$-modules, this globalization coincides with the standard dilation.

\begin{theorem}\label{thm:dilglob}
For a partial $H$-module $(M,\lambda)$, the standard dilation $(\cl{M},T_\lambda,\varphi)$ is the globalization of the associated geometric partial comodule in $\cC$.
\end{theorem}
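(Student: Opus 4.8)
The plan is to show that the standard dilation $(\cl{M},T_\lambda,\varphi)$, together with the projection $\varpi : \cl{M} \to M$ (which, in $\cC = \op{\vectk}$, is a morphism $M \to \cl{M}$ in $\cC$), satisfies the universal property characterizing the globalization, exploiting the explicit description of the globalization available in this setting from Theorem \ref{thm:globalization}. Since $\cC = \op{\vectk}$ is abelian monoidal with $H$ a coalgebra therein (namely the underlying coalgebra of the Hopf algebra, which is flat), Theorem \ref{thm:uffa} guarantees that the geometric partial comodule $\cP(M,\lambda)$ is globalizable, and by Theorem \ref{thm:globalization} its globalization $Y_M$ is computed as the equalizer \eqref{eq:glob} in $\com{H}$, which in $\op{\vectk}$ means: a \emph{coequalizer} in $\Rmod{H}$ of the two $H$-linear maps $H \ot H \ot M \rightrightarrows H \ot M$ dual to $\rho_X \ot H$ and $(\pi_X \ot H)\circ(X\ot\Delta)$ appearing in Proposition \ref{prop:PmodGPmod}. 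So the first step is to write down this coequalizer of right $H$-modules explicitly and identify it with a concrete quotient of the free-ish $H$-module $H \ot M$.

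\textbf{Key steps.} First, I would unwind Proposition \ref{prop:PmodGPmod}: there $H \bul M$ is the subspace \eqref{eq:ParRepBul} of $H \ot M$, $\pi_M : H \bul M \hookrightarrow H \ot M$ is the inclusion (an epimorphism in $\cC$), and $\rho_M : H \bul M \to M$ in $\cC$ is, in $\vectk$-terms, a map $M \to H \bul M$. Dualizing the equalizer \eqref{eq:glob} into $\vectk$, the globalization object $\cl{M}_{\mathrm{gl}}$ is the quotient of $H \ot M$ by the subspace generated by elements of the form $h\cdot(k_{(1)}\cdot(S(k_{(2)})\cdot m)) \ot (\cdots)$ minus $(hk_{(1)})\cdot(\cdots)$ — precisely the relations \ref{item:pm2}--\ref{item:pm3} defining a partial module, made $H$-linear. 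Second, I would recall the construction of the standard dilation from \cite[Theorem 4.3]{AlvesBatistaVercruysse}: $\cl{M}$ is exactly such a quotient $H \ot M / N_0$ where $N_0$ is the $H$-submodule generated by the partial-action relations, with $H$-action by left multiplication on the first tensorand, $T_\lambda(\cl{h\ot m}) = \cl{h_{(1)} \ot S(h_{(2)})\cdot m}$ wait — rather $T_\lambda$ the idempotent with image $\cl{1 \ot m}$-type elements, and $\varphi : M \to T_\lambda(\cl M)$, $m \mapsto \cl{1_H \ot m}$. Third, I would exhibit a mutually inverse pair of $H$-comodule (i.e. $H$-module, in $\vectk$) maps between $\cl{M}_{\mathrm{gl}}$ and $\cl{M}$, matching up the projections to $M$; this amounts to checking that both quotients are by the \emph{same} subspace of $H \ot M$, which is a direct comparison of the defining relations. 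Finally, I would verify that under this identification the map $\epsilon_M = (X\ot\varepsilon)\circ\kappa$ from Theorem \ref{thm:globalization} corresponds to $\varpi = \theta^{-1}\circ T$ via \eqref{eq:cdot}, and invoke the uniqueness of the globalization (axiom \ref{item:GL3}) together with the uniqueness up to isomorphism of the standard dilation to conclude.

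\textbf{Main obstacle.} The delicate point is matching the two quotient presentations \emph{as $H$-modules with the extra idempotent/projection data}, not merely as vector spaces: the equalizer from Theorem \ref{thm:globalization} is described via the universal coequalizer $\pi_M$ of \eqref{buluniversal} (equivalently, via Lemma \ref{buliscolimit}, indexed over all $f \in \Hom{}{}{\K}{}{\K}{H}$), whereas the standard dilation of \cite{AlvesBatistaVercruysse} is built by a concrete ad hoc construction; I expect the bulk of the work to be a careful bookkeeping argument showing that the $H$-submodule of $H \ot M$ cut out by ``$\sum_i k\cdot(h_i\cdot m_i) = \sum_i kh_i\cdot m_i$ for all $k$'' being the annihilator condition defining $H\bul M$, its ``orthogonal complement'' in the dual picture is exactly Alves--Batista--Vercruysse's submodule $N_0$, and that the idempotent $T_\lambda$ and the map $\varphi$ are the images of the comodule-theoretic data $\delta$ and $\kappa$ under this identification. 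Once this dictionary is set up, the verification of \ref{item:GL0}, \ref{item:GL3}, \ref{item:GL2} is essentially forced, with \ref{item:GL2} being automatic in $\op{\vectk}$ (or following from Theorem \ref{thm:uffa}), and minimality/properness of the standard dilation translating into the minimal-proper-cover description of Theorem \ref{globcov}.
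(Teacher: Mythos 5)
Your overall strategy---identify the coequalizer computing the globalization $Y$ in $\op{\vectk}$ with the standard dilation of Alves--Batista--Vercruysse, both viewed as quotients of $H\ot M$---is the right one, and you correctly locate where the difficulty sits. But the way you propose to close it does not go through as stated, for two reasons. First, the standard dilation $\cl{M}$ is \emph{not} constructed in \cite{AlvesBatistaVercruysse} as a quotient $H\ot M/N_0$ by an explicitly listed $H$-submodule of relations: it is the $H$-submodule of $\Homk(H,M)$ generated by the image of $\jmath:M\to\Homk(H,M)$, $\jmath(m)(h)=h\cdot m$. So there is no set of ``defining relations'' on that side to compare with; identifying the kernel of the surjection $H\ot M\twoheadrightarrow\cl{M}$, $h\ot m\mapsto h\triangleright\jmath(m)$, with the relation subspace of the coequalizer \eqref{eq:ParRepCoeq} \emph{is} the content of the theorem, not a bookkeeping step. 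Second, the relations defining $Y$ are not ``axioms \ref{item:pm2}--\ref{item:pm3} made $H$-linear'': the coequalizer is taken over $H\ot(H\bul M)$, and $H\bul M$ is cut out by the universally quantified condition $\sum_i k\cdot(h_i\cdot m_i)=\sum_i kh_i\cdot m_i$ for all $k$, which a priori contains more than the instances produced by the partial-module axioms.

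The missing idea is the one that actually proves injectivity. One first gets a canonical $H$-linear comparison map $\sigma:Y\to\cl{M}$ from the universal property of the globalization applied to the cover $\varphi$; it is surjective because $\cl{M}$ is generated by $\varphi(M)$ as an $H$-module, and it is injective because of \emph{properness} of the standard dilation, i.e.\ the identity $h\cdot m=\varpi\left(h\triangleright\theta(m)\right)$ of \eqref{eq:cdot}. Concretely, if $\sum_i h_i\triangleright\varphi(m_i)=0$ in $\cl{M}$, then applying $\varpi\circ(k\triangleright -)$ for arbitrary $k$ shows that $\sum_i h_i\ot m_i$ lies in $H\bul M$, after which the coequalizer relation collapses its class in $Y$ to $1\ot\sum_i h_i\cdot m_i=0$. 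In your proposal \eqref{eq:cdot} is invoked only at the end to match the projections $\epsilon_M$ and $\varpi$, not to detect membership in $H\bul M$ from vanishing in $\cl{M}$; without that mechanism the ``dictionary'' between the two quotients is never established, so the central step of the argument is a genuine gap.
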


\begin{proof}
Every partial $H$-module $(M,\lambda)$ can be realized as a subspace of $\Hom{}{}{}{}{H}{M}$ via the morphism $\jmath: M \to \Hom{}{}{}{}{H}{M}$ given by $\jmath(m)(h) = h\cdot m$, for all $m\in M$, $h \in H$. Observe that $\Hom{}{}{}{}{H}{M}$ is a global $H$-module with $\delta:H \ot \Hom{}{}{}{}{H}{M} \to \Hom{}{}{}{}{H}{M}, h \ot f \mapsto f\mu(\mbox{-} \otimes h)$.
In \cite[proof of Theorem 4.3]{AlvesBatistaVercruysse}, the standard dilation $\left(\cl{M},T_\lambda,\varphi\right)$ of $M$ has been constructed as the $H$-submodule of $\Hom{}{}{}{}{H}{M}$ generated by the image of the map $\jmath$. 
In particular, we have a commutative diagram
\[
\xymatrix @!0 @C=60pt @R=40pt{
 & \Hom{}{}{}{}{H}{M} & \\
 & \cl{M} \ar[u]_-{\subseteq} & \\
M \ar[ur]^-{\varphi} \ar@/^2ex/[uur]^-{\jmath} & & H \ot M \ar[ul]|-{\delta \circ (H \ot \varphi)} \ar@/_2ex/[uul]|-{\delta \circ (H \ot \jmath)} \\
 & H \bul M \ar[ur]_-{\pi_M} \ar[ul]^-{\rho_M} & 
}
\]
which entails that (the opposite of) $\varphi:M \to \cl{M}$ is a morphism of partial $H$-comodules.

Now, since $\cC$ is abelian, we know from \S\ref{ssec:abelian} that the associated geometric partial comodule $(M,H \bul M,\pi_M,\rho_M)$ admits a globalization $(Y,\nu)$ where $\nu:H \ot Y \to Y, h \ot y \mapsto h \diamond y$.  
By construction, the standard dilation $(\cl{M},T_\lambda,\varphi)$ is a global $H$-comodule in $\cC$ together with a morphism $\op{\varphi}: \cl{M} \to M$ in $\cC$ of partial comodules. Thus, there exists a unique $H$-colinear morphism $\op{\sigma}: \cl{M}\to Y$ in $\cC$ such that $\op{\epsilon_M}\, \op{\circ}\, \op{\sigma} = \op{\left(\sigma \circ \epsilon_M\right)} = \op{\varphi}$ in $\cC$, by the universal property of $Y$.
Let us prove that $\sigma$ is an isomorphism. Since $\cl{M}$, as an $H$-module, is generated by $\varphi(M)$, we have that
\[
x = \sum_i h_i\triangleright \varphi(m_i) = \sum_i h_i\triangleright \sigma\epsilon_M(m_i) = \sigma\left(\sum_i h_i\diamond \epsilon_M(m_i)\right)
\] 
for every $x\in \cl{M}$ and so $\sigma$ is surjective. To prove that it is injective as well, recall 
that the canonical epimorphism $\kappa: H\ot M \to Y$ of the coequalizer
\begin{equation}\label{eq:ParRepCoeq}
\xymatrix@C=45pt{
H \ot H\bul M \ar@<+0.5ex>[rr]^-{H \ot \rho_M} \ar@<-0.5ex>[rr]_-{(\mu \ot M)\circ(H \ot \pi_X)} && H \ot M \ar[r]^-{\kappa} & Y 
}
\end{equation}
in $\vectk$ satisfies $\kappa = \nu \circ \left(H \otimes \epsilon_M\right)$ (see Theorem \ref{thm:globalization}), that is $\kappa (h\ot m) = h \diamond \epsilon_M(m)$ for all $h \in H$, $ m\in M$. Hence $\sigma$ additionally satisfies $\sigma \circ \kappa = \delta \circ (H \ot \varphi)$. Thus, if $y\in Y$ is such that $\sigma (y) =0$, then
\begin{equation}\label{eq:triang0}
0 = \sigma \left(y\right) =  \sigma \left(\kappa\left(\sum_{i}h_i \ot m_i\right)\right) = \sum_{i}h_i\triangleright \varphi\left( m_i\right)
\end{equation}
where $\sum_{i}h_i \ot m_i \in H \ot M$ is such that $\kappa\left(\sum_{i}h_i \ot m_i\right) = y$. However, if $\sum_{i}h_i\triangleright \varphi\left( m_i\right) = 0$,
\[
0 = \varpi \left(\sum_{i}h_i\triangleright \varphi\left( m_i\right)\right) \stackrel{\eqref{eq:cdot}}{=} \sum_i h_i \cdot m_i
\]
and hence
\begin{gather*}
\sum_i k\cdot\left(h_i \cdot m_i\right) = k\cdot\left(\sum_i h_i \cdot m_i\right) = 0 \stackrel{\eqref{eq:triang0}}{=} \varpi\left(k\triangleright\left(\sum_{i}h_i\triangleright \varphi\left( m_i\right)\right)\right) \\
= \sum_{i}\varpi\left(kh_i\triangleright \varphi\left( m_i\right)\right) \stackrel{\eqref{eq:cdot}}{=} \sum_i kh_i \cdot m_i,
\end{gather*}
for all $k\in H$, which entails that $\sum_{i}h_i \ot m_i \in H\bul M$ by \eqref{eq:ParRepBul}. As a consequence,
\begin{gather*}
y = \kappa\left(\sum_{i}h_i \ot m_i\right) = \left(\kappa\circ(\mu\ot M) \circ (H \ot \pi_M)\right)\left(\sum_{i}1 \ot h_i \ot m_i\right) \\
 \stackrel{\eqref{eq:ParRepCoeq}}{=} \left(\kappa\circ (H \ot \rho_M)\right)\left(\sum_{i}1 \ot h_i \ot m_i\right) = \kappa\left(1\ot \sum_{i} h_i \cdot m_i\right) = 0
\end{gather*}
and $\sigma$ is injective.
\end{proof}

\begin{corollary}
Denote by $\cJ : \Rmod{H} \to \ParMod_H$ the inclusion functor and by $\cD: \ParMod_H \to \Rmod{H}$ the dilation functor. We have that $\cG\circ\cP \cong \cD : \ParMod_H \to \Rmod{H}$ and $\cP \circ \cJ \cong \cI: \Rmod{H} \to \ParMod_H$, whence the inner and outer triangle in following diagram commute
\[
\xymatrix @=15pt{
 & \Rmod{H}=\op{\left(\com{H}\right)} \ar@<-0.5ex>[dl]_-{\cJ} \ar@<+0.5ex>[dr]^-{\cI} & \\
\ParMod_H \ar@<-0.5ex>[ur]_-{\cD} \ar[rr]_-{\cP} & & \op{\left(\gparcom{H}\right)} \ar@<+0.5ex>[ul]^-{\cG} 
}
\]
where the globalization functor $\cG$ is now left adjoint of the inclusion functor $\cI$ and where $\gparcom{H}$ and $\com{H}$ denote the categories of geometric partial comodules and of global comodules over the coalgebra $(H,\mu,u)$ in $\cC = \op{\vectk}$, respectively.
\end{corollary}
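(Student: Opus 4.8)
The plan is to establish the two natural isomorphisms $\cP \circ \cJ \cong \cI$ and $\cG \circ \cP \cong \cD$; the commutativity of the inner and outer triangles in the displayed diagram is then just a reformulation of these, and the adjointness of $\cG$ and $\cI$ is the one already available to us, namely the opposite of the adjunction recalled in \S\ref{ssec:globgen} (recall from the opening of \S\ref{ssec:hopf} that $\gparcom{H}_{gl} = \gparcom{H}$ here, so $\cG$ is defined on all of $\op{\left(\gparcom{H}\right)}$).

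For $\cP \circ \cJ \cong \cI$ I would argue directly: if $N$ is a global $H$-module, viewed through $\cJ$ as a partial $H$-module, then associativity of the action makes the condition cutting out $H \bullet N$ in \eqref{eq:ParRepBul} vacuous, so $H \bullet N = H \otimes N$, the epimorphism $\pi_N$ is an isomorphism, and $\rho_N$ is nothing but the coaction of $N$. Hence $\cP(\cJ(N))$ coincides, up to this canonical isomorphism, with the canonical geometric partial comodule $\cI(N)$ associated with the global comodule $N$. Naturality is immediate since $\cP$, $\cJ$ and $\cI$ all act as the identity on underlying linear maps.

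For $\cG \circ \cP \cong \cD$ the statement on objects is exactly Theorem \ref{thm:dilglob}: for every partial module $(M,\lambda)$ it provides an isomorphism $\sigma_M$ of global $H$-modules between $\cG(\cP(M)) = Y_M$ and the standard dilation $\cl{M} = \cD(M)$, and one reads off from its proof that $\sigma_M$ intertwines the globalization morphism $\epsilon_M \colon M \to Y_M$ with the dilation morphism $\varphi_M \colon M \to \cl{M}$, i.e. $\sigma_M \circ \epsilon_M = \varphi_M$. It remains to check that $\sigma$ is natural. Given $f \colon M \to M'$ in $\ParMod_H$, I would use that $\cG(\cP(f)) \colon Y_M \to Y_{M'}$ is the unique $H$-linear map with $\cG(\cP(f)) \circ \epsilon_M = \epsilon_{M'} \circ f$ --- this is the universal property \ref{item:GL3} defining the globalization functor, read across $\cC = \op{\vectk}$ --- whereas $\cD(f) \colon \cl{M} \to \cl{M'}$ is the unique $H$-linear map with $\cD(f) \circ \varphi_M = \varphi_{M'} \circ f$, uniqueness holding because $\cl{M}$ is generated as an $H$-module by $\varphi_M(M)$ (which is also how $\cD$ is made functorial in \cite[Theorem 4.3]{AlvesBatistaVercruysse}). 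Transporting $\cD(f)$ along $\sigma_M$ and $\sigma_{M'}$ produces an $H$-linear map $Y_M \to Y_{M'}$ which, using $\sigma_M \circ \epsilon_M = \varphi_M$ and $\sigma_{M'} \circ \epsilon_{M'} = \varphi_{M'}$, satisfies the equation characterizing $\cG(\cP(f))$; hence the two agree, and $\sigma$ is a natural isomorphism $\cG \circ \cP \cong \cD$.

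The only genuinely delicate point I anticipate is the bookkeeping forced by the opposite category: the universal property of the globalization lives in $\gparcom{H} \subseteq \cC$ while that of the standard dilation lives in $\vectk$, so one must verify that, after passing through $\cC = \op{\vectk}$, the two characterizations of $\cG(\cP(f))$ and of $\cD(f)$ become literally one and the same universal problem. Once the arrows are lined up, uniqueness yields the naturality square, and the inner and outer triangles commute by inspection of the diagram.
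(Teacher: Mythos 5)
Your proposal is correct and follows the route the paper intends: the paper states this corollary without proof, as an immediate consequence of Theorem \ref{thm:dilglob} (for $\cG\circ\cP\cong\cD$ on objects) together with the observation that \eqref{eq:ParRepBul} reduces to $H\otimes N$ for a global module (giving $\cP\circ\cJ\cong\cI$). Your additional verification of naturality of $\sigma$ via the uniqueness in \ref{item:GL3} and the generation of $\cl{M}$ by $\varphi_M(M)$ is exactly the routine bookkeeping the authors leave to the reader, and it is carried out correctly.
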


Remark that the functors $\cD$ and $\cJ$ above are not adjoints. Rather, we have for all $M\in\ParMod_H$ and $N\in \Rmod{H}$, natural transformations 
$$\Rmod{H}\big(\cD (M), N\big)\cong \gparcom{H} \big(\cP (M),\cP\cJ (N)\big) \supset \ParMod_H \big(M,\cJ (N)\big)$$
where the latter inclusion is not an equality (as $\cP$ is faithful but not full).

Let us conclude this subsection by inflecting Proposition \ref{prop:PmodGPmod} in a concrete example of interest and by computing the resulting globalization.

\begin{example}[inspired by {\cite[\S6.2]{ParRep}}]
Let $C_2 = \{1,g\}$ be the cyclic group of order $2$ generated by some element $g$ and let $H \coloneqq \K C_2$ be its group Hopf algebra. By \cite[Theorem 4.2]{ParRep}, partial modules over $\K C_2$ can be identified with modules over the associated partial Hopf algebra $H_{par}$. By \cite[Definition 4.1]{ParRep} (and in view of \cite[Lemma 2.11]{ParCorep}), 
\[H_{par} = T(H)\Big/\left\langle \begin{gathered} 1_{T(H)} - 1_H, \quad h \ot k_{(1)} \ot S(k_{(2)}) - h k_{(1)} \ot S(k_{(2)}), \\ k_{(1)} \ot S(k_{(2)}) \ot h - k_{(1)} \ot S(k_{(2)})h \end{gathered} ~\Bigg|~ h,k \in H\right\rangle,\]
that is to say, by setting $x \coloneqq [g]$ and $y \coloneqq [1]$,
\[H_{par} ~ = ~ \K\langle x,y\rangle\Big/\left\langle \begin{gathered} 1-y, \ y^3 - y^2, \ yx^2 - x^2,\\ xy^2-xy, \ x^3-xy \end{gathered}\right\rangle ~ \cong ~ \K[X]/\langle X^3-X\rangle.\]
Therefore, any partial module over $\K C_2$ is the same thing as a vector space $V$ together with a distinguished linear endomorphism $T$ such that $(T+\id_V)T(T - \id_V) = 0$. The partial $H$-action is uniquely determined by $1\cdot v = v$ and $g \cdot v = T(v)$ for all $v \in V$. 
Decompose $V = V_{-1} \oplus V_0 \oplus V_1$ into its eigenspaces, where $V_0 = \ker(T)$, and set $c(V) \coloneqq V_{-1} \oplus V_1$ (this is the \emph{global core} of $V$ in the sense of \cite[Theorem 2.5]{AlvesBatistaVercruysse}). In this setting,
\[H \bul V = \big\{1 \ot v + g \ot w \mid \forall\,h \in H, h \cdot (g \cdot v) = hg \cdot v\big\} = \big(1 \ot V\big) \oplus \big(g \ot c(V)\big).\]
Let us compute the globalization of $(V,H\bul V,\pi_V,\rho_V)$. To this aim, fix bases $\cB_i$ of $V_i$ for $i \in \{0,\pm1\}$ and notice that
\[H \ot H\bul V = \ms{span}_\K\left\{\begin{gathered} 1 \ot 1 \ot v, \ g \ot 1 \ot v, \\ 1 \ot g \ot w,\ g \ot g \ot w\end{gathered} ~\Big|~ \begin{gathered} v \in \cB_{-1} \cup \cB_0 \cup \cB_1 \\ w \in \cB_{-1} \cup \cB_1 \end{gathered}\right\}.\]
Since the first row is made of elements in the equalizer of $H \ot \rho_V$ and $(\mu \ot V)\circ (H \ot \pi_V)$, we conclude that
\begin{align*}
Y & \coloneqq (H \ot V) \Big/ \ms{span}_\K\left\{\begin{gathered}1 \ot g \cdot v - g \ot v,\ 1 \ot v - g \ot g\cdot v \\ 1 \ot g\cdot w - g \ot w,\ 1 \ot w - g \ot g\cdot w\end{gathered} ~\Big|~ \begin{gathered} v \in \cB_{-1} \\ w \in \cB_1 \end{gathered}\right\} \\
& = (H \ot V) \big/ \ms{span}_\K\left\{ 1 \ot v + g \ot v, 1 \ot w - g \ot w \mid v \in \cB_{-1}, w \in \cB_1\right\}.
\end{align*}
Set $t \coloneqq (1+g)/2$ and $s \coloneqq (1-g)/2$ in $\K C_2$. Then
\[Y \cong (H \ot V) \Big/ \ms{span}_\K\left\{ \begin{gathered} t \ot v, \\ s \ot w \end{gathered} ~\Big|~ \begin{gathered} v \in \cB_{-1}, \\ w \in \cB_1 \end{gathered} \right\} \cong (H \ot V_0) \oplus (s \ot V_{-1}) \oplus (t \ot V_1)\]
with regular left $H$-action and, by writing $v = v_{-1} + v_0 + v_1$,
\[\epsilon: V \to Y, \qquad v \mapsto (1 \ot v_0) + (s \ot v_{-1}) + (t \ot v_1).\]
Notice also that, since $s+t = 1$, $Y \cong (V_0 \oplus V_{-1}) \oplus (V_0 \oplus V_1)$.
This is in accordance (up to isomorphism) with \cite[Example 4.11]{AlvesBatistaVercruysse}, as expected.
\end{example}

\subsection{Partial representations of finite groups}\label{ssec:groups}

Partial representations of groups can be viewed as a particular instance of partial representations of Hopf algebras, and hence the results of the previous section can be applied to recover the dilation of partial group representations from \cite{Abadie2} by means of our globalization. However, in \cite[\S1.2]{MicheleWilliam} an alternative, apparently \emph{ad hoc}, globalization theorem for partial representations of finite groups on complex vector spaces is presented. Here, we show that the globalization studied therein is a particular instance of the globalization for partial modules over Hopf algebras.

\begin{definition}[{\cite[Definition 6.1]{DokuExel}}] \label{def:partial_rep}
A \emph{partial representation} $(V,\pi)$ of a group $G$ on a complex vector space $V$ is a map $\pi\colon G\to \End{\C}{V}$ such that for all $g, h\in G$
\begin{enumerate}[label=(PR\arabic*),ref=(PR\arabic*)]
	\item\label{item:PR1} $\pi(1_G)=\id_V$;
	\item\label{item:PR2} $\pi(g^{-1})\pi(gh)=\pi(g^{-1})\pi(g)\pi(h)$;
	\item\label{item:PR3} $\pi(gh)\pi(h^{-1})=\pi(g)\pi(h)\pi(h^{-1})$.
\end{enumerate}
As a matter of notation, one sets $V_{g} \coloneqq \pi(g)\pi(g^{-1})(V)$ for all $g \in G$. For $(V,\pi)$ and $(V',\pi')$ two partial representations of $G$, a \emph{morphism of partial representations} is a linear map $f\colon V\to V'$ such that $f\circ \pi(g)=\pi'(g)\circ f$ for all $g\in G$. In particular, partial representations of a group form a category that we denote by $\ParRep_G$.
\end{definition}

Any partial representation $(V,\pi)$ of a group $G$ gives rise to a partial module over the group Hopf algebra $\C[G]$ in the sense of Definition \ref{def:ParRep}. Therefore, in view of Proposition \ref{prop:PmodGPmod}, it gives rise to a geometric partial comodule in $\mathsf{Vect}_{\C}^{\sf{op}}$ and hence one may consider the globalization of $V$ in the sense of Definition \ref{def:glob}, which by the foregoing coincides (up to isomorphism) with the proper minimal dilation of $V$ in the sense of \cite[Definition 4.1]{AlvesBatistaVercruysse}. 

Concretely, notice that the globalization $Y_V$ of $(V,\pi)$ can be realized as the quotient $(\C[G] \otimes V)/S$ where $S \subseteq \C[G] \otimes V$ is the vector subspace generated by 
\[
\left\{\left.\sum_{g \in G} hg \otimes v_g - \sum_{g \in G} h \otimes \pi(g)(v_g) \,\right| h \in G \text{ and } \sum_{g\in G}\pi(k)\pi(g)(v_g) = \sum_{g\in G} \pi(kg)(v_g), \forall\, k \in G\right\}.
\]
On the other hand, the globalization $\overline{V}$ from \cite[Theorem 1.18]{MicheleWilliam} is realized as the quotient of $\C[G] \otimes V$ by $Z$, the $\C$-vector subspace generated by
\[
\Big\{hg\otimes v-h\otimes \pi(g)(v) ~\big|~ g,h\in G, v\in V_{g^{-1}}\Big\}.
\]
Once recalled that $\C[G] \otimes V$ has a natural left $\C[G]$-module structure given by multiplication on the left tensorand, one observes that
\[
\begin{gathered}
S = \C[G]\cdot \left\{\left.\sum_{g \in G} g \otimes v_g - \sum_{g \in G} 1 \otimes \pi(g)(v_g) ~\right|~  \sum_{g\in G}\pi(k)\pi(g)(v_g) = \sum_{g\in G} \pi(kg)(v_g), \forall\, k \in G\right\} \\
\text{and } \qquad Z = \C[G]\cdot \Big\{g\otimes v-1\otimes \pi(g)(v) ~\big|~ g\in G, v\in V_{g^{-1}}\Big\}.
\end{gathered}
\]

\begin{proposition}\label{prop:S=Z}
In $\C[G] \otimes V$ we have $S = Z$.
\end{proposition}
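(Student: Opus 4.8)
The plan is to prove the two inclusions separately, the inclusion $S\subseteq Z$ being the substantial one. It is convenient to record first a cleaner description of $S$: since $1\otimes\rho_V(\xi)$ belongs to $H\bullet V$ for every $\xi\in H\bullet V$ (it is the image of $\rho_V(\xi)$ under the canonical section $v\mapsto 1\otimes v$ of $\rho_V$), the generating set $\{\xi-1\otimes\rho_V(\xi)\mid\xi\in H\bullet V\}$ coincides with $\ker\big(\rho_V|_{H\bullet V}\big)$; moreover this subspace is already stable under the left $\C[G]$-action, so that $S=\ker\big(\rho_V|_{H\bullet V}\big)$. Here $\rho_V\colon H\bullet V\to V$ sends $\sum_ih_i\otimes m_i$ to $\sum_i\pi(h_i)(m_i)$ and $H\bullet V\subseteq\C[G]\otimes V$ is the subspace \eqref{eq:ParRepBul}. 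The $\C[G]$-stability is checked as follows: if $\eta=\sum_gg\otimes v_g\in H\bullet V$ has $\rho_V(\eta)=0$, then \ref{item:PR2} gives $\pi(k)\pi(jg)-\pi(kj)\pi(g)=\big(\pi(k)\pi(k^{-1})-\pi(kj)\pi((kj)^{-1})\big)\pi(kjg)$, and summing over $g$ (using $\eta\in H\bullet V$ in the form $\sum_g\pi(kjg)(v_g)=\pi(kj)(\rho_V(\eta))=0$) yields that $j\cdot\eta$ again lies in $H\bullet V$, with $\rho_V(j\cdot\eta)=\pi(j)(\rho_V(\eta))=0$.

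For $Z\subseteq S$ it suffices to place each generator $hg\otimes v-h\otimes\pi(g)(v)$ of $Z$ (with $v\in V_{g^{-1}}$) inside $S$. Writing $v=\pi(g^{-1})\pi(g)(w)$ and using the two instances $\pi(g)\pi(g^{-1})\pi(g)=\pi(g)$ and $\pi(kg)\pi(g^{-1})\pi(g)=\pi(k)\pi(g)$ of \ref{item:PR3}, one gets $\pi(k)\pi(g)(v)=\pi(kg)(v)$ for every $k$, i.e.\ $g\otimes v\in H\bullet V$ with $\rho_V(g\otimes v)=\pi(g)(v)$; hence $hg\otimes v-h\otimes\pi(g)(v)=h\cdot\big((g\otimes v)-1\otimes\rho_V(g\otimes v)\big)\in S$.

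For $S\subseteq Z$, by the description above one must show $\eta\in Z$ for every $\eta=\sum_gg\otimes v_g\in\ker\big(\rho_V|_{H\bullet V}\big)$. Put $e_g:=\pi(g^{-1})\pi(g)$; by the identities above it is idempotent with image $V_{g^{-1}}$ and kernel $\ker\pi(g)$, so split $v_g=e_g(v_g)+w_g$ with $e_g(v_g)\in V_{g^{-1}}$ and $w_g\in\ker\pi(g)$. Since $\sum_g\pi(g)(e_g(v_g))=\sum_g\pi(g)(v_g)=0$, the element $\sum_gg\otimes e_g(v_g)$ is a sum of generators $g\otimes e_g(v_g)-1\otimes\pi(g)(v_g)$ of $Z$, hence lies in $Z$; so it remains to prove $\widetilde w:=\sum_gg\otimes w_g\in Z$. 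Unravelling that $\widetilde w\in H\bullet V$ and using $\pi(g)(w_g)=0$, one finds $\sum_g\pi(kg)(w_g)=0$ for all $k\in G$, equivalently $w_g=-\sum_{h\neq1}\pi(h)(w_{gh})$ for all $g$.

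This last claim is where the real work lies, and I expect it to be the main obstacle; the $\widetilde w$ are exactly the ``non-simple'' members of $H\bullet V$, those not reachable from single tensors $g\otimes v$ with $v\in V_{g^{-1}}$. One route is a direct computation in $\overline V:=(\C[G]\otimes V)/Z$: from the defining relations of $Z$ one deduces $[a\otimes\pi(b)(v)]=[ab\otimes e_b(v)]$ for all $a,b\in G$ and $v\in V$, and then feeds the recursion $w_g=-\sum_{h\neq1}\pi(h)(w_{gh})$ into $[\widetilde w]=\sum_g[g\otimes w_g]$, the bookkeeping being governed by \ref{item:PR2}--\ref{item:PR3} together with the splitting $V=V_{g^{-1}}\oplus\ker\pi(g)$. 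A cleaner alternative circumvents the computation: by \cite[Theorem 1.18]{MicheleWilliam} the pair $(\overline V,\,v\mapsto[1\otimes v])$ is a proper minimal dilation of $(V,\pi)$ in the sense of \cite[Definition 4.1]{AlvesBatistaVercruysse}, so by uniqueness of minimal dilations and Theorem~\ref{thm:dilglob} it is isomorphic, compatibly with the maps from $V$, to $(Y_V,\epsilon_V)$; such an isomorphism is $\C[G]$-linear and fixes all classes $[1\otimes v]$, hence is induced by $\id_{\C[G]\otimes V}$, and together with $Z\subseteq S$ this forces $S=Z$.
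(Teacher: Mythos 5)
The reduction you perform is sound and runs parallel to the paper's: the inclusion $Z\subseteq S$ via \ref{item:PR3}, the identification of $S$ with the kernel of $\rho_V$ on $H\bullet V$ (a nice streamlining, and your $\C[G]$-stability check via \ref{item:PR2} is correct), the splitting $v_g=e_g(v_g)+w_g$, and the derivation of $\sum_g\pi(kg)(w_g)=0$ for all $k$ are all fine. But the proof stops exactly where the real content of the proposition lies, and neither of your two routes closes it. Route A is only a sketch, and its obvious execution stalls: substituting the recursion $w_g=-\sum_{h\neq 1}\pi(h)(w_{gh})$ and using $[a\otimes\pi(b)(v)]=[ab\otimes e_b(v)]$ just gives $[\widetilde w]=-\sum_g\bigl[g\otimes\bigl(\sum_{h\neq 1}e_h\bigr)(w_g)\bigr]$, which does not visibly terminate. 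What the paper does instead is use the relation for a \emph{single} $k=h^{-1}$ with $w_h\neq 0$ to rewrite $\sum_g g\otimes w_g$ exactly as $h\cdot\bigl(\sum_{g\neq h}h^{-1}g\otimes w_g-\sum_{g\neq h}1\otimes\pi(h^{-1}g)(w_g)\bigr)$, check (via \ref{item:PR2}--\ref{item:PR3}) that the shorter element again satisfies the defining condition of $S$, and induct on the size of the support. That support-reducing factorization is the missing idea.

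Route B is not an acceptable substitute in this context. It rests on the unproved assertion that $\bigl(\overline V,\,v\mapsto[1\otimes v]\bigr)$ is a proper \emph{minimal} dilation in the sense of \cite[Definition 4.1]{AlvesBatistaVercruysse}; verifying this (injectivity of $v\mapsto[1\otimes v]$, i.e.\ $Z\cap(1\otimes V)=0$, plus minimality) is work of essentially the same order as the inclusion $S\subseteq Z$ you are trying to avoid, and it is precisely the identification of the ad hoc construction of \cite{MicheleWilliam} with the dilation/globalization machinery that this proposition (and the corollary it serves) is meant to establish, so the argument is circular in spirit unless you check carefully that \cite[Theorem 1.18]{MicheleWilliam} literally supplies those axioms. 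Moreover, even granting that, you would still need to verify that the isomorphism produced by uniqueness of minimal dilations and by Theorem~\ref{thm:dilglob} intertwines the maps $v\mapsto[1\otimes v]$ on both sides before you can conclude it is induced by the identity of $\C[G]\otimes V$; this is plausible but is asserted, not shown. As it stands, the decisive step of the proof is missing.
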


\begin{proof}
For every $g,h \in G$ and for all $v \in V_{g^{-1}} = \pi(g^{-1})\pi(g)(V)$ we have that
\[
\pi(h)\pi(g)(v) = \pi(h)\pi(g)\pi(g^{-1})\pi(g)(v) \stackrel{\ref{item:PR3}}{=} \pi(hg)\pi(g^{-1})\pi(g)(v) = \pi(hg)(v),
\]
whence $g\otimes v-1\otimes \pi(g)(v) \in S$ and so $Z \subseteq S$. Conversely, let $\sum_{g \in G} g \otimes v_g \in \C[G] \otimes V$ be such that
\begin{equation}\label{eq:S}
\sum_{g \in G}\pi(k)\pi(g)(v_g) = \sum_{g \in G} \pi(kg)(v_g) \quad \text{ for all } k \in G.
\end{equation}
Observe that
\begin{multline*}
\sum_{g \in G} g \otimes v_g - \sum_{g \in G} 1 \otimes \pi(g)(v_g) + Z \\
= \left(\sum_{g \in G} g \otimes \left(v_g - \pi(g^{-1})\pi(g)(v_g)\right)\right) + \left(\sum_{g \in G} g \otimes \pi(g^{-1})\pi(g)(v_g) - \sum_{g \in G} 1 \otimes \pi(g)(v_g)\right) + Z \\
\stackrel{\ref{item:PR2}}{=} \sum_{g \in G} g \otimes \left(v_g - \pi(g^{-1})\pi(g)(v_g)\right) + Z
\end{multline*}
in $(\C[G] \otimes V)/Z$ and set $w_g \coloneqq v_g - \pi(g^{-1})\pi(g)(v_g)$ for all $g \in G$.
Condition \ref{item:PR3} and \eqref{eq:S} entail that $\sum_{g \in G} \pi(kg)(w_g) = 0$ for all $k \in G$. In particular, if there exists a $h \in G$ such that $v_{h} \neq \pi\left(h^{-1}\right)\pi\left(h\right)\left(v_{h}\right)$, we may take $k = h^{-1}$ and find
\begin{equation}\label{eq:S2}
w_{h} = - \sum_{g \neq h} \pi\left(h^{-1}g\right)\left(w_g\right).
\end{equation}
As a consequence,
\[
\begin{aligned}
\sum_{g \in G} g \otimes w_g & 
= h \cdot \left(\sum_{g \neq h} h^{-1}g \otimes w_g - \sum_{g \neq h} 1 \otimes \pi(h^{-1}g)(w_g)\right)
\end{aligned}
\]
and hence
\[
\sum_{g \in G} g \otimes v_g - \sum_{g \in G} 1 \otimes \pi(g)(v_g) + Z = h \cdot \left(\sum_{g \neq h} h^{-1}g \otimes w_g - \sum_{g \neq h} 1 \otimes \pi(h^{-1}g)(w_g)\right) + Z.
\]
Now, since for all $k \in G$ we have
\[
\sum_{g \neq h} \pi\big(kh^{-1}g\big)(w_g) = \sum_{g \in G} \pi\big(kh^{-1}g\big)(w_g) - \pi(k)(w_h) = - \pi(k)(w_h) \stackrel{\eqref{eq:S2}}{=} \sum_{g \neq h} \pi(k)\pi\big(h^{-1}g\big)(w_g),
\]
and inductive argument leads to conclude that $\sum_{g \in G} g \otimes v_g - \sum_{g \in G} 1 \otimes \pi(g)(v_g) + Z = Z$ and hence that $S \subseteq Z$, thus finishing the proof.
\end{proof}

\begin{corollary}
The globalization of a partial representation $(V,\pi)$ of $G$ as in \cite[Theorem 1.18]{MicheleWilliam} coincides with the globalization from Theorem \ref{thm:globalization}.
\end{corollary}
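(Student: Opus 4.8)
The plan is to compare the two globalizations through their explicit presentations as quotients of $\C[G] \otimes V$, which are exhibited in the paragraphs preceding Proposition \ref{prop:S=Z}. On the one hand, specialising the coequalizer \eqref{eq:ParRepCoeq} to $H = \C[G]$ and $M = V$ identifies the globalization $Y_V$ of Theorem \ref{thm:globalization} with the coequalizer of $\C[G] \otimes \rho_V$ and $(\mu \otimes V) \circ (\C[G] \otimes \pi_V)$, that is, with $(\C[G] \otimes V)/S$; this quotient carries the left regular $\C[G]$-module structure descending from the first tensorand (legitimate since $S$ is visibly a $\C[G]$-submodule), and the globalization morphism is $\epsilon_V \colon V \to Y_V$, $v \mapsto \overline{1 \otimes v}$, because $\kappa = \nu \circ (\C[G] \otimes \epsilon_V)$ forces $\epsilon_V(v) = \kappa(1 \otimes v)$. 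On the other hand, \cite[Theorem 1.18]{MicheleWilliam} realizes its globalization $\overline{V}$ as $(\C[G] \otimes V)/Z$, again endowed with the left regular $\C[G]$-action and with the canonical map $v \mapsto \overline{1 \otimes v}$ from $V$.

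Granting both descriptions, the corollary reduces to the single identity $S = Z$ inside $\C[G] \otimes V$, which is precisely Proposition \ref{prop:S=Z}. Once this is known, $Y_V$ and $\overline{V}$ share the same underlying vector space, carry the same $\C[G]$-module structure (both by descent of the regular action on $\C[G] \otimes V$), and come equipped with the same structure morphism $v \mapsto \overline{1 \otimes v}$, so they coincide on the nose; by Theorem \ref{thm:dilglob} they moreover coincide with the standard proper minimal dilation of $(V,\pi)$. The only point demanding a little care along the way is the bookkeeping of the opposite category $\cC = \op{\vectk}$, which explains why the globalization morphism appears as a map $V \to Y_V$ in $\vectk$ and why $Y_V$ shows up as a quotient rather than a subobject; this is routine.

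Accordingly, the real work sits in Proposition \ref{prop:S=Z}, and specifically in the inclusion $S \subseteq Z$: the reverse inclusion $Z \subseteq S$ is immediate from axiom \ref{item:PR3}, which exhibits each generator $g \otimes v - 1 \otimes \pi(g)(v)$ with $v \in V_{g^{-1}}$ as an element of $S$, whereas $S \subseteq Z$ requires rewriting an arbitrary $\sum_{g \in G} g \otimes v_g$ subject to \eqref{eq:S} modulo $Z$ and then iterating a reduction steered by \ref{item:PR2}, \ref{item:PR3} and \eqref{eq:S}. Should one wish to bypass this computation altogether, an alternative route to the corollary is to invoke the uniqueness of globalizations (axiom \ref{item:GL3}): it suffices to verify that $\overline{V}$, together with $v \mapsto \overline{1 \otimes v}$, is a proper minimal dilation in the sense of Definition \ref{def:dilation} --- which is essentially what \cite[Theorem 1.18]{MicheleWilliam} establishes --- and then conclude via Theorem \ref{thm:dilglob} and the fact that a globalization, when it exists, is unique.
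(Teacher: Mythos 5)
Your proposal is correct and follows essentially the same route as the paper: both realize $Y_V$ and $\overline{V}$ as the quotients $(\C[G]\otimes V)/S$ and $(\C[G]\otimes V)/Z$ described before Proposition \ref{prop:S=Z}, and then the corollary is exactly the identity $S=Z$ established there, with the structure morphisms $v\mapsto\overline{1\otimes v}$ matching on both sides. Your alternative argument via uniqueness of globalizations and Theorem \ref{thm:dilglob} is a valid shortcut but not what the paper does; the main argument you give is the paper's.
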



\subsection{Algebraic partial comodules over Hopf algebras}\label{ssec:pcom}

Let $\K$ be a field and consider a Hopf algebra $(H,\mu,u,\Delta,\varepsilon,S)$ over $\K$, as in \S \ref{ssec:hopf}. Let $\cC = \vectk$, which is again an abelian monoidal category for which Theorem \ref{thm:uffa} holds for any coalgebra, and consider the coalgebra $(H,\Delta,\varepsilon)$ therein. As in \S\ref{ssec:hopf}, we have that $\gparcom{H}_{gl}=\gparcom{H}$.
Recall the following definition from \cite{ParCorep}, which is the categorical dual of Definition \ref{def:ParRep}.

\begin{definition}[{\cite[Definition 3.1, Lemma 3.3]{ParCorep}}]
An \emph{algebraic partial (right) $H$-comodule} is a $\K$-vector space $M$ with a $\K$-linear map
$
\partial_M  : M \to M \otimes H, m \mapsto m_{[0]} \otimes m_{[1]},
$ 
satisfying
\begin{enumerate}[leftmargin=1cm]
\item\label{item:APC1} $(M \otimes \varepsilon)\circ \partial_M  = \id_M$,
\item\label{item:APC2} $(M \otimes H \otimes \mu)\circ (M \otimes H \otimes S \otimes H)\circ (M \otimes \Delta \otimes H)\circ (\partial_M  \otimes H)\circ \partial_M  = (M \otimes H \otimes \mu)\circ (M \otimes H \otimes S \otimes H)\circ (\partial_M  \otimes H \otimes H)\circ (\partial_M  \otimes H)\circ \partial_M $,
\item\label{item:APC3} $(M \otimes \mu \otimes H)\circ (M \otimes S \otimes H \otimes H)\circ (M \otimes H \otimes \Delta)\circ (\partial_M  \otimes H)\circ \partial_M  = (M \otimes \mu \otimes H)\circ (M \otimes S \otimes H \otimes H)\circ (\partial_M  \otimes H \otimes H)\circ (\partial_M  \otimes H)\circ \partial_M $.
\end{enumerate}
A morphism between two algebraic partial $H$-comodules $(M, \partial_M )$ and $(N, \partial_N)$ is a linear map $f : M \to N$ satisfying $\partial_N \circ f = (f \otimes H) \circ \partial_M $. 
\end{definition}

Let us see that any algebraic partial $H$-comodule can be endowed with a geometric partial $H$-comodule structure as well. To this aim, fix a basis $\{h_i \mid i\in I\}$ for the Hopf algebra $H$ and write, for each $i\in I$,
$$\Delta(h_i)=\sum_{j,k} a^i_{j,k} h_j\ot h_k,$$
where only a finite number of the coefficients $a^i_{j,k}\in \K$ is non-zero. Let $(M,\partial_M )$ be an algebraic partial comodule and consider a basis $\{m_\ell \mid \ell\in L\}$ for $M$. Then we can write
$$\partial_M (m_\ell)=\sum_{p,i} b^\ell_{p,i}m_p\ot h_i$$
for each $\ell\in L$, where only a finite number of the coefficients $b^\ell_{p,i}\in \K$ is non-zero. 

\begin{proposition}\label{prop:algparcom}
Let $(M,\partial_M )$ be an algebraic partial comodule over the Hopf algebra $H$. By keeping the notation introduced above, $M$ can be endowed with a structure of geometric partial $H$-comodule $(M,M\bul H,\pi_M,\rho_M)$ by taking $M\bul H \coloneqq M\ot H/Q$, where $Q\subset M\ot H$ is the subspace generated by the elements
$$\sum_{q,p,k} b^\ell_{p,t}b^p_{q,k} m_q\ot h_k - \sum_{q,p,k} b^\ell_{q,p}a^p_{k,t} m_q\ot h_k $$
for all $\ell \in L$, $t \in I$, with $\pi_M: M\ot H\to M\bul H$ the canonical projection and $\rho_M \coloneqq \pi_M\circ \partial_M$.
\end{proposition}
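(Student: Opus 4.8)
The plan is to apply Theorem \ref{thm:constructgeometric}: it suffices to show that $(M,M\bul H,\pi_M,\rho_M)$ is exactly the geometric partial comodule one obtains from the non-coassociative counital $H$-comodule structure on $M$ given by a suitable $\partial_M'$, where $(M\bul H,\pi_M)$ universally coequalizes the pair in \eqref{buluniversal}. The subtlety is that an algebraic partial comodule $(M,\partial_M)$ in the sense of \cite{ParCorep} satisfies axioms \ref{item:APC1}--\ref{item:APC3}, which are \emph{not} literally the single equality needed in \eqref{buluniversal}; rather they are the ``Hopf-twisted'' counterparts. So the first task is to massage \ref{item:APC1}--\ref{item:APC3}, using the antipode axioms for $H$, into the statement that the two composites $(\partial_M\ot H)\circ\partial_M$ and $(M\ot\Delta)\circ\partial_M$ become equal after post-composing with $\pi_M\ot H$, i.e. that $\pi_M$ coequalizes \eqref{buluniversal}. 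Concretely, I would first recognize that $Q$ is spanned precisely by the image, under $M\ot H$ followed by projection, of the difference $(\partial_M\ot H)\circ\partial_M-(M\ot\Delta)\circ\partial_M$ evaluated on basis elements (after an appropriate ``evaluation against $h_t^*$'' in the middle tensor factor, which is the role of the extra index $t$ in the generators of $Q$). This is a coordinate computation: expanding $(\partial_M\ot H)\circ\partial_M(m_\ell)=\sum b^\ell_{p,s}b^p_{q,k}\,m_q\ot h_s\ot h_k$ and $(M\ot\Delta)\circ\partial_M(m_\ell)=\sum b^\ell_{q,p}a^p_{s,k}\,m_q\ot h_s\ot h_k$, and applying $h_t^*$ to the middle factor, yields exactly the generators listed.

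Next, I would invoke Lemma \ref{buliscolimit}\ref{item:ncc1}, which applies in $\cC=\vectk$ by Remark \ref{rem:examples} (``because of bases''), to conclude that $\pi_M:M\ot H\to M\bul H$ is the universal arrow coequalizing, for every $f\in\Homk(H,\K)$, the compositions $X\ot H\ot f$ applied to $(\partial_M\ot H)\circ\partial_M$ and $(M\ot\Delta)\circ\partial_M$. Since it suffices to test $f$ on the dual basis $\{h_i^*\}$, and the quotient by $Q$ kills exactly the differences obtained this way, $(M\bul H,\pi_M)$ has the required universal property. Here one must be a little careful: \eqref{buluniversal} is stated for a general non-coassociative $\partial_X$, and what we have in hand is $\partial_M$ together with axioms \ref{item:APC1}--\ref{item:APC3}. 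The cleanest route is: (i) check counitality \ref{item:APC1} gives $(M\ot\varepsilon)\circ\partial_M=\id_M$, so $\partial_M\in\ncom{H}$; (ii) use \ref{item:APC2}--\ref{item:APC3} \emph{and} the antipode to derive that $(\pi_M\ot H)\circ(\partial_M\ot H)\circ\partial_M=(\pi_M\ot H)\circ(M\ot\Delta)\circ\partial_M$, i.e. that $\pi_M$ does coequalize \eqref{buluniversal}; (iii) check the universal property of $\pi_M$ among all such arrows via Lemma \ref{buliscolimit}.

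The \textbf{main obstacle} is step (ii): deducing the ``naive'' coassociativity-modulo-$Q$ relation from the twisted axioms \ref{item:APC2}--\ref{item:APC3}. The point is that \ref{item:APC2} and \ref{item:APC3} involve $S$ and $\mu$, so one has to apply $S$ together with the defining property $\mu\circ(S\ot H)\circ\Delta = u\circ\varepsilon$ (and the analogous one with $S$ on the right) to ``cancel'' the antipode and recover the plain difference $(\partial_M\ot H)\circ\partial_M-(M\ot\Delta)\circ\partial_M$ landing in $Q\ot H$. This is exactly the dual of the computation in \cite{ParCorep} (or of \cite[Lemma 2.11]{ParCorep}) showing the equivalence of the various axiom sets, and it is best organized diagrammatically rather than in coordinates; alternatively, one can cite the corresponding result in \cite{ParCorep} and only spell out the translation to the coordinate description of $Q$. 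Once step (ii) is in place, Theorem \ref{thm:constructgeometric} immediately yields that $(M,M\bul H,\pi_M,\pi_M\circ\partial_M)$ is a geometric partial $H$-comodule, which is the assertion.
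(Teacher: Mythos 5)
Your skeleton matches the paper's proof: apply Theorem \ref{thm:constructgeometric}, use Lemma \ref{buliscolimit}\ref{item:ncc1} in $\cC=\vectk$ (where the dual basis $\{h_t^*\}$ gives a jointly monic family) to realize the universal coequalizer of \eqref{buluniversal} as a genuine colimit, and identify $Q$ by the coordinate computation
$(M\ot H\ot h_t^*)\big((\partial_M\ot H)(\partial_M(m_\ell))\big)=\sum_{q,p,k}b^\ell_{p,t}b^p_{q,k}\,m_q\ot h_k$ versus $(M\ot H\ot h_t^*)\big((M\ot\Delta)(\partial_M(m_\ell))\big)=\sum_{q,p,k}b^\ell_{q,p}a^p_{k,t}\,m_q\ot h_k$. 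That, together with counitality \ref{item:APC1}, is the entire proof.

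The problem is your step (ii), which you flag as the ``main obstacle'': it is a phantom. You do not need to \emph{derive} the relation $(\pi_M\ot H)\circ(\partial_M\ot H)\circ\partial_M=(\pi_M\ot H)\circ(M\ot\Delta)\circ\partial_M$ from axioms \ref{item:APC2}--\ref{item:APC3} via antipode cancellation; it holds tautologically from the definition of $Q$. Indeed, writing the difference $D(m_\ell)\coloneqq(\partial_M\ot H)(\partial_M(m_\ell))-(M\ot\Delta)(\partial_M(m_\ell))=\sum_t d_{\ell,t}\ot h_t$ with $d_{\ell,t}=(M\ot H\ot h_t^*)(D(m_\ell))$, the generators of $Q$ are precisely the $d_{\ell,t}$, so $D(m_\ell)\in Q\ot H$ and $(\pi_M\ot H)(D(m_\ell))=0$ by construction. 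Lemma \ref{buliscolimit}\ref{item:ncc1} then says this quotient is the \emph{universal} such arrow, and Theorem \ref{thm:constructgeometric} applies to any counital $(M,\partial_M)\in\ncom{H}$ whatsoever --- axioms \ref{item:APC2} and \ref{item:APC3} are never used. Moreover, the strategy you propose for step (ii) cannot work as described: if the antipode identities allowed you to ``cancel $S$'' in \ref{item:APC2}--\ref{item:APC3} and recover plain coassociativity of $\partial_M$, every algebraic partial comodule would be a global comodule, which is false. So the proposal reaches the right conclusion, but the step you identify as carrying the mathematical weight is vacuous, and the machinery you plan to deploy there (dualizing \cite[Lemma 2.11]{ParCorep}) is both unnecessary and misdirected.
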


\begin{proof}
This is a straightforward application of Theorem \ref{thm:constructgeometric}. 
Indeed, for $\cC = \vectk$ the conditions of Lemma \ref{buliscolimit} 
are fulfilled and the prescribed colimit $M\bul H$ can be computed as in the statement, since for any $\ell\in L$ and $f \in \Hom{}{}{\K}{}{H}{\K}$ we have
$$(M \ot H \ot f)\big((\partial_M \ot H)\big(\partial_M (m_\ell)\big)\big) = \sum_{q,p,k,i} f(h_i)b^\ell_{p,i}b^p_{q,k} m_q\ot h_k$$
and 
\[(M \ot H \ot f)\big((M\ot \Delta)\big(\partial_M (m_\ell)\big)\big) = \sum_{q,p,k,i} f(h_i)b^\ell_{q,p}a^p_{k,i} m_q\ot h_k. \qedhere\]
\end{proof}

\begin{theorem}\label{thm:pcorep}
Algebraic partial $H$-comodules admit a globalization.
\end{theorem}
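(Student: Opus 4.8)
The plan is to deduce the statement from results already at hand, since the substance has been established. First I would record that $\cC = \vectk$ is an abelian monoidal category in which every object is flat: as $\K$ is a field, $-\ot_\K C$ is exact for every $\K$-vector space $C$, so in particular the coalgebra $H$ is left flat in the sense of \S\ref{ssec:abelian}. Hence Theorem \ref{thm:uffa} applies and gives $\gparcom{H}_{gl} = \gparcom{H}$, that is, every geometric partial $H$-comodule in $\vectk$ is globalizable. On the other hand, Proposition \ref{prop:algparcom} attaches to each algebraic partial $H$-comodule $(M,\partial_M)$ the geometric partial $H$-comodule $(M,M\bul H,\pi_M,\rho_M)$ with $\rho_M = \pi_M\circ\partial_M$. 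Combining these two facts immediately yields that the latter is globalizable, which is exactly the assertion.

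For a more concrete argument --- and to obtain an explicit description of the globalization --- I would instead invoke Proposition \ref{ex:globnoncoass} directly. Its hypotheses hold here for purely formal reasons: Proposition \ref{prop:algparcom} furnishes the arrow $\pi_M\colon M\ot H\to M\bul H$ universally coequalizing \eqref{buluniversal}, and Proposition \ref{Hflat} (applied to the flat coalgebra $H$ in the abelian category $\vectk$) guarantees that the equalizer \eqref{eq:glob} exists in $\com{H}$ and is preserved by the forgetful functor to $\cC$. Therefore the globalization of $(M,M\bul H,\pi_M,\rho_M)$ is that very equalizer $(Y,\delta)$, which can be computed inside $\vectk$ as the equalizer of the parallel pair $\rho_M\ot H$ and $(\pi_M\ot H)\circ(M\ot\Delta)$ from the comodule $(M\ot H,\,M\ot\Delta)$ to $(M\bul H\ot H,\,M\bul H\ot\Delta)$, equipped with its induced $H$-coaction and with globalization morphism $(M\ot\varepsilon)\circ\kappa\colon Y\to M$; in particular the globalization is again a genuine (coassociative) $H$-comodule on a $\K$-vector space.

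Finally I would observe that a morphism of algebraic partial $H$-comodules is nothing but a morphism in $\ncom{H}$, so that by Corollary \ref{cor:ncoass} the passage $(M,\partial_M)\mapsto(M,M\bul H,\pi_M,\rho_M)$ is a faithful functor into $\gparcom{H}$; post-composing it with the globalization functor $\cG$ of \S\ref{ssec:globgen} yields a globalization functor from algebraic partial $H$-comodules to $\com{H}$ (which, as already happens for partial modules, need not be adjoint to the evident inclusion, since the functor into $\gparcom{H}$ is faithful but not full). There is essentially no obstacle to overcome here: all the work sits in Proposition \ref{prop:algparcom} and Theorem \ref{thm:uffa}, and the only point one must check in this section is that their hypotheses --- flatness of $H$ and existence of the universal $\pi_M$ --- are met, which over a field is automatic.
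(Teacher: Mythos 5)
Your first paragraph is exactly the paper's own argument: over the field $\K$ every vector space is flat, so Theorem \ref{thm:uffa} gives $\gparcom{H}_{gl}=\gparcom{H}$ in $\vectk$, and Proposition \ref{prop:algparcom} realizes each algebraic partial $H$-comodule as a geometric one, so the conclusion follows. The additional material (the explicit equalizer description via Proposition \ref{ex:globnoncoass} and the functoriality remark via Corollary \ref{cor:ncoass}) is correct but supplementary; the core proof coincides with the paper's.
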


\begin{proof}
Since $\cC = \vectk$ is an abelian monoidal category for which $\com{H}$ is complete and $\com{H} \to \cC$ preserves limits for every coalgebra $H$, the result follows from Theorem \ref{thm:uffa}. 
\end{proof}

As we did for partial modules in \S\ref{ssec:hopf}, let us inflect Proposition \ref{prop:algparcom} and Theorem \ref{thm:pcorep} in a concrete example of interest.

\begin{example}
Let $H_4$ be Sweedler's four dimensional Hopf algebra. As an algebra, $H_4$ is generated by two elements $g,y$ subject to the relations $g^2 = 1$, $y^2 = 0$ and $yg = - gy$.
Therefore, as a vector space over $\K$, $H_4$ is generated by $1,g,y,gy$. Its Hopf algebra structure is uniquely determined by $\Delta(g) = g \ot g$ and $\Delta(y) = y \ot g + 1 \ot y$.
Consider $M \coloneqq \K[z]$, the $\K$-vector space of polynomials in $z$, and define
\[\partial:\K[z] \to \K[z] \ot H_4, \qquad z^n \mapsto z^{n+1} \ot y + z^n\ot \frac{1+g}{2}.\]
This makes of $(\K[z],\partial)$ a right algebraic partial $H_4$-comodule (see \cite[Example 3.14]{ParCorep}). To determine $\K[z] \bul H_4$, we compute $M \ot H \ot f$ of $(\partial \ot H)(\partial(z^n)) - (\K[z] \ot \Delta)(\partial(z^n))$ for all $n \geq 0$ and all $f \in \{1^*,g^*,y^*,gy^*\}$, the dual basis of $H_4$. Up to a non-zero scalar, it results in the family of elements
\[U \coloneqq \left\{z^{n+1}\ot y - z^n \ot \frac{1-g}{2} ~\Big|~ n \geq 0\right\}\subset \K[x]\ot H_4.\]
The linear map
\[\rho:\K[z] \to \K[z] \bul H_4 = \frac{\K[z] \ot H_4}{\ms{span}_\K U}, \qquad z^n \mapsto \overline{z^n\ot g},\]
where by $\overline{(-)}$ we denote the equivalence class in the quotient, together with the canonical projection on the quotient $\pi$, equip $\K[z]$ with a structure of geometric partial comodule over $H_4$. To compute the globalization of $(\K[z],\K[z] \bul H_4,\pi,\rho)$ we have to determine the equalizer $Y$ of \eqref{eq:glob}. However, after observing that a basis for $\K[z]\bul H_4 \ot H_4$ is given by
\[\left\{\cl{1 \ot y}\ot a,\cl{z^n \ot b}\ot a ~\big|~ a \in \{1,g,y,gy\}, b\in\{1,g,gy\},n\geq 0\right\},\]
one can conclude by a direct computation that $Y = \ms{span}_\K\{z^n \ot g\mid n \geq0\} = \K[z]\ot g$ with $\delta(z^n \ot g) = (z^n \ot g) \ot g$ and $\epsilon: Y \to \K[z], z^n \ot g \mapsto z^n$. We point out that any element $p(z) \in \K[z]$ determines a one-dimensional geometric partial subcomodule of $\K[z]$ (in accordance with \cite[Theorem 2.27]{JoostJiawey}). However, any positive power of $z$, for example, generates an infinite-dimensional algebraic partial subcomodule (as observed in \cite[Example 3.14]{ParCorep}). This nicely highlights the advantages of working with geometric partial comodules instead of algebraic ones.
\end{example}

\section{Hopf partial comodules over bialgebras}\label{sec:Hopf}

Let $(B,\mu,u,\Delta,\varepsilon)$ be a bialgebra over a field $\K$. The category $\cC \coloneqq \Rmod{B}$ of right $B$-modules is an abelian monoidal category with tensor product $\otimes = \otimes_\K$ and unit object $\I = \K$. Furthermore, $(B,\Delta,\varepsilon)$ with the regular right $B$-module structure is a coalgebra in $\cC$ and the endofunctor $- \otimes B$ is continuous. In particular, $\com{B}$ is complete and $\com{B} \to \cC$ preserves limits.

\begin{lemma}\label{Hopfmodule1}
There is a bijective correspondence between:
\begin{enumerate}[label=(\arabic*),ref=(\arabic*),leftmargin=0.7cm]
\item\label{item:HM1} geometric partial comodules $(M,M\bul B,\rho_M,\pi_M)$ over $(B,\Delta,\varepsilon)$ in $\cC = \Rmod{B}$;
\item\label{item:HM2} right $B$-modules $M$ with a geometric partial comodule structure $(M,M\bul B,\pi_M,\rho_M)$ over $(B,\Delta,\varepsilon)$ in $\vectk$ such that $M\bul B$ is a right $B$-module and $\rho_M$ and $\pi_M$ are $B$-linear.
\end{enumerate}
\end{lemma}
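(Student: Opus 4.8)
The plan is to reduce the statement to the observation that the forgetful functor $U\colon \Rmod{B}\to\vectk$ creates colimits. In particular $U$ preserves the pushouts occurring in \eqref{doublebullets}, and, being faithful, it reflects epimorphisms and isomorphisms (a bijective $B$-linear map being an isomorphism of $B$-modules). I will also use that, for $B$-modules $X$ and $Y$, the monoidal product $X\ot Y$ in $\Rmod{B}$ is the vector space $X\ot_\K Y$ equipped with the codiagonal $B$-action through $\Delta$; consequently $\Delta\colon B\to B\ot B$ and $\varepsilon\colon B\to\K$ are $B$-linear, hence so are $X\ot\Delta$ and $X\ot\varepsilon$, and $g\ot H$ is $B$-linear whenever $g$ is.

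For the implication \ref{item:HM1}$\Rightarrow$\ref{item:HM2}, note that a geometric partial comodule in $\Rmod{B}$ consists by definition of $B$-modules $M$ and $M\bul B$, of $B$-linear maps $\pi_M$ (an epimorphism in $\Rmod{B}$, hence surjective) and $\rho_M$, and of the $B$-linear structure maps $M\bul\varepsilon$ and $\theta$ witnessing \ref{item:QPC1} and \ref{item:QPC2}. Applying $U$, the pushouts of \eqref{doublebullets} formed in $\Rmod{B}$ have the corresponding $\vectk$-pushouts as underlying vector spaces, the maps $M\bul\varepsilon$ and $\theta$ stay $\K$-linear ($\theta$ still an isomorphism), and the relevant diagrams still commute. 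Hence the underlying datum is a geometric partial comodule in $\vectk$ having all the extra $B$-linearity required in \ref{item:HM2}, and nothing else needs to be checked.

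For the converse \ref{item:HM2}$\Rightarrow$\ref{item:HM1}, start from $B$-modules $M$, $M\bul B$ and $B$-linear maps $\pi_M,\rho_M$ forming a geometric partial comodule in $\vectk$. As $\pi_M$ is surjective it is an epimorphism in $\Rmod{B}$, so $(M,M\bul B,\pi_M,\rho_M)$ is a partial comodule datum in $\Rmod{B}$. By the remarks above, the legs of the two spans inducing the pushouts in \eqref{doublebullets}, i.e.\ $\pi_M$, $\rho_M\ot B$ and $(\pi_M\ot B)\circ(M\ot\Delta)$, are all $B$-linear, so these pushouts exist in $\Rmod{B}$ and $U$ identifies them with the vector spaces $(M\bul B)\bul B$ and $M\bul(B\bul B)$ already constructed in $\vectk$. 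It remains to upgrade the $\vectk$-structure maps $M\bul\varepsilon$ and $\theta$ to $B$-linear ones, which is a routine cancellation argument: $M\bul\varepsilon$ is the unique map with $(M\bul\varepsilon)\circ\pi_M=M\ot\varepsilon$, so it is $B$-linear because $\pi_M$ is a $B$-linear epimorphism and $M\ot\varepsilon$ is $B$-linear; likewise $\theta$ is the unique map with $\theta\circ\pi_{M,\Delta}=\pi_{M\bul B}$, and $\pi_{M,\Delta}$ is a $B$-linear epimorphism (being the pushout coprojection opposite to the epimorphism $\pi_M$), so $\theta$ is $B$-linear and, being a $B$-linear isomorphism of vector spaces, an isomorphism in $\Rmod{B}$. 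Thus $(M,M\bul B,\pi_M,\rho_M)$ is a geometric partial comodule in $\Rmod{B}$.

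Finally, the two assignments are visibly mutually inverse, which yields the claimed bijection. I do not expect any genuine obstacle: the only subtlety is ensuring that the pushouts entering the definition of a geometric partial comodule coincide in $\Rmod{B}$ and in $\vectk$, and that the \emph{induced} maps $M\bul\varepsilon$ and $\theta$ (as opposed to the chosen data $\pi_M,\rho_M$) are automatically $B$-linear --- which the cancellation argument above settles.
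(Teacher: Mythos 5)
Your proof is correct and takes essentially the same route as the paper's, which simply observes that the conditions of \ref{item:HM2} make the datum a partial comodule datum in $\Rmod{B}$ and that the forgetful functor $\Rmod{B}\to\vectk$ preserves and reflects pushouts, so the counitality and geometric coassociativity conditions agree in the two categories. Your cancellation arguments showing that the induced maps $M\bul\varepsilon$ and $\theta$ are automatically $B$-linear are precisely the details the paper leaves implicit.
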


\begin{proof}
The conditions of \ref{item:HM2} assure that $(M,M\bul B,\pi_M,\rho_M)$ is a partial comodule datum in $\Rmod{B}$. Moreover, since
the forgetful functor $\Rmod{B}\to \vectk$ preserves and reflects pushouts, the counitality and geometric coassociativity conditions in $\Rmod{B}$ or $\vectk$ are the same. 
\end{proof}

This leads to the following definition.

\begin{definition}
A (right) \emph{Hopf partial comodule} is a quadruple $(M,M\bul B,\pi_M,\rho_M)$ satisfying the equivalent conditions of Lemma~\ref{Hopfmodule1}. A morphism of Hopf partial comodules is a $B$-linear map that is at the same time a morphism of geometric partial comodules. We denote the category of Hopf partial comodules by $\gphopfmod{B}$.

Similarly one can define {\em left} Hopf partial comodules and dually {\em Hopf partial modules}, which have a global $B$-comodule structure and a partial $B$-module structure.
\end{definition}

\begin{example}
\begin{enumerate}[leftmargin=0.7cm]

\item Every right Hopf module over $B$ is a right Hopf partial comodule (these are the global $B$-comodules in $\Rmod{B}$).

\item Every right $B$-module is a right Hopf partial comodule over $B$ with the trivial partial comodule structure (see \cite[\S2.4]{JoostJiawey})
\[
\begin{gathered}
\xymatrix@R=7pt{
M \ar@{=}[dr] & & M\otimes B \ar@{->>}[dl]^(0.45){M \otimes \varepsilon} \\
 & M. & 
}
\end{gathered}
\]
\item Let $V$ be a vector space and let $N$ be a right $B$-submodule of $V \ot B$.
Then the geometric partial $B$-comodule induced by the global comodule $(V\ot B,V\ot \Delta)$ on the quotient $(V\ot B)/N$ makes of it a Hopf partial comodule.
\end{enumerate}
\end{example}

Global right Hopf modules over a bialgebra $B$ can also be characterized as modules in the monoidal category of right $B$-comodules. Even if the category of geometric partial comodules over $B$ is not monoidal, in general, (see \cite[\S3.2]{JoostJiawey} and, in particular, \cite[Theorem 3.9]{JoostJiawey}), we may still obtain a similar description in the partial case. 
Recall from \cite[Proposition 3.4]{JoostJiawey} that 
we can define an associative tensor product $\boxtimes$ of partial comodule data as follows. For $(X,X\bul B,\pi_X,\rho_X)$ and $(Y,Y\bul B,\pi_Y,\rho_Y)$ two partial comodule data, their tensor product is given by the outer cospan in the following diagram
\[
\xymatrix @!0 @C=75pt @R=30pt{
X\ot Y \ar[dr]_(0.4){\rho_X\ot \rho_Y} & & X\ot B\ot Y\ot B \ar[dl]_(0.55){\pi_X\ot \pi_Y} \ar[dr]^(0.55){\mu_{X,Y}} & & X \ot Y \ot B \ar@{=}[dl] \\
& (X\bul B)\ot(Y\bul B) \ar[dr]_-{\bar\mu} & & X\ot Y\ot B \ar[dl]^-{\pi_{X\ot Y}} & \\
& & (X\ot Y)\bul B \ar@{}[u]|<<<{\pushout}  & &
}
\]
where $\mu_{X,Y} \coloneqq (X\ot Y\ot \mu)\circ (X\ot\tau_{B,Y}\ot B)$ and $\tau_{B,Y}(b \ot y) = y \ot b$ for all $y \in Y, b \in B$. This (see \cite[Proposition 3.4]{JoostJiawey}) makes of the category $\pcd{B}$ of partial comodule data over $B$ a monoidal category with oplax unit (i.e., the unitors $\clambda_X: X \to \I \ot X$ and $\crho_X : X \to X \ot \I$ are just natural transformations and not natural isomorphisms) in such a way that
\[
\xymatrix @=10pt{
\com{B} \ar[rr] \ar[dr] & & \pcd{B} \ar[dl] \\
 & \vectk & 
}
\]
is a commutative diagram of monoidal functors. The oplax unit is $(\K,B,\id_B,u)$. 

Note that, in general, even if $X$ and $Y$ are geometric partial comodules, $X\ot Y$ is not necessarily geometric (see \cite[page 4125]{JoostJiawey}). 

Since the tensor product in $\pcd{B}$ of global $B$-comodules coincide with the structure of (geometric) partial comodule datum on their tensor product as global $B$-comodules (see \cite[Remark 3.10]{JoostJiawey}), the bialgebra $B$ itself becomes a monoid in $(\pcd{B},\boxtimes,\K)$ by considering $u:\K \to B$ and $\mu:B \boxtimes B \to B$, that is to say,
\[
\begin{gathered}
\xymatrix @!0 @R=20pt @C=50pt {
\K \ar[dd]_-{u} \ar[dr]^-{u} & & B \ar[dd]^-{u \ot B} \ar@{=}[dl] \\
 & B \ar[dd]^-{u \ot B} & \\
B \ar[dr]_-{\Delta} & & B \ot B \ar@{=}[dl] \\
 & B \ot B &
}
\end{gathered}
\qquad\text{and}\qquad
\begin{gathered}
\xymatrix @!0 @C=80pt @R=20pt{
B\ot B \ar[dd]_-{\mu} \ar[dr]_(0.4){\delta_{B \ot B}} & & (B\ot B)\ot B \ar@{=}[dl] \ar[dd]^-{\mu\ot B} \\
& (B \ot  B)\ot B \ar[dd]^-{\mu \ot B} & \\
B \ar[dr]_-{\Delta} & & B \ot B \ar@{=}[dl] \\
 & B \ot B &
}
\end{gathered}
\]
where $\delta_{B \ot B}(a \ot b) = a_{(1)} \ot b_{(1)} \ot a_{(2)}b_{(2)}$ for all $a,b \in B$ (in Sweedler's notation).

\begin{proposition}
Let $B$ be a $\K$-bialgebra and let $(M,M\bul B,\rho_M,\pi_M)$ be a geometric partial comodule over $(B,\Delta,\varepsilon)$ in $\vectk$. Then there is a bijective correspondence between:
\begin{enumerate}[label=(\arabic*),ref=(\arabic*),leftmargin=0.7cm]
\item\label{item:parhopfcom1} $B$-module structures on $M$ turning $(M,M\bul B,\pi_M,\rho_M)$ in a Hopf partial comodule with the additional property that $\ker(\pi_M)$ is a $B\ot B$-submodule of $M\ot B$;
\item\label{item:parhopfcom2} Actions of $(B,B\ot B,\id_{B \ot B},\Delta)$ on $(M,M\bul B,\pi_M,\rho_M)$ in $(\pcd{B},\boxtimes,\K)$.
\end{enumerate}
In case $B$ is moreover a Hopf algebra, then the additional property in \ref{item:parhopfcom1} is equivalent to the fact that the partial comodule datum $(M,M\bul B,\pi_M,\rho_M)$ is right equivariant in the sense of \cite[Definition 3.5]{JoostJiawey}, which means that $\ker(\pi_M)$ is a right $B$-submodule of $M\ot B$ with respect to the free $B$-action on the right tensorand.
\end{proposition}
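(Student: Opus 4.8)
The plan is to unwind the notion of monoid action in $(\pcd{B},\boxtimes,\K)$ into explicit data on the fixed partial comodule datum $(M,M\bul B,\pi_M,\rho_M)$ and to match it against the conditions in Lemma~\ref{Hopfmodule1}\ref{item:HM2} together with the additional property. A morphism $\alpha\colon M\boxtimes B\to M$ in $\pcd{B}$ is a pair $(\alpha_0,\alpha_1)$ with $\alpha_0\colon M\ot B\to M$ the underlying map on objects and $\alpha_1\colon (M\ot B)\bul B\to M\bul B$, subject to the two squares of the shield diagram \eqref{eq:shield}. Since every $\pi_X$ in $\pcd{B}$ is an epimorphism, the square $\alpha_1\circ\pi_{M\boxtimes B}=\pi_M\circ(\alpha_0\ot B)$ forces $\alpha_1$ to be the map induced by $\alpha_0$; hence the monoidal forgetful functor $\pcd{B}\to\vectk$ is faithful, and because it sends $\boxtimes$ to $\ot_\K$, the oplax unit $(\K,B,\id_B,u)$ to $\K$, the unitor $\crho_M$ to $\id_M$, and the monoid $B$ of $\pcd{B}$ to the ordinary algebra $(B,\mu,u)$, I can read off the associativity and unitality of the action $\alpha$, on underlying spaces, as precisely the associativity and unitality of $\alpha_0$ as a right $B$-action. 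So a monoid action of $B$ on $(M,M\bul B,\pi_M,\rho_M)$ is the same as a right $B$-module structure $\alpha_0$ on $M$ for which (i)~the induced $\alpha_1$ is well defined and (ii)~the $\rho$-part of \eqref{eq:shield} holds.

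Next I would analyse (i) and (ii) by computing the relevant part of $\boxtimes$ when the second factor is the regular comodule $B=(B,B\ot B,\id_{B\ot B},\Delta)$. From the pushout defining $\boxtimes$ one gets $(M\ot B)\bul B=(M\ot B\ot B)/W$ with $\pi_{M\boxtimes B}$ the quotient map, where $W$ is the image under the map $\mu_{M,B}$ from the definition of $\boxtimes$ of $\ker(\pi_M)\ot B\ot B$; unravelling $\mu_{M,B}$ shows that $W$ is spanned by the elements $\sum_i m_i\ot c\ot b_i d$ with $\sum_i m_i\ot b_i\in\ker(\pi_M)$ and $c,d\in B$. Consequently the $\pi$-square pins down $\alpha_1$ by $\alpha_1(\overline{m\ot c\ot b})=\pi_M(\alpha_0(m\ot c)\ot b)$, and well-definedness of $\alpha_1$ is exactly the requirement that $\sum_i \alpha_0(m_i\ot c)\ot b_i d\in\ker(\pi_M)$ whenever $\sum_i m_i\ot b_i\in\ker(\pi_M)$, that is, that $\ker(\pi_M)$ is a $B\ot B$-submodule of $M\ot B$ for the action $(m\ot b)(c\ot d)=\alpha_0(m\ot c)\ot bd$ --- the additional property. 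Since $\Delta(B)\subseteq B\ot B$, this property makes $\ker(\pi_M)$ stable under the codiagonal action $(m\ot b)\cdot c=mc_{(1)}\ot bc_{(2)}$, so $\pi_M(n)\star c:=\pi_M(n\cdot c)$ (with $\cdot$ the codiagonal action) defines a right $B$-module structure on $M\bul B$ making $\pi_M$ $B$-linear, and it is the unique such structure since $\pi_M$ is epi. Finally, computing the left leg $\rho_{M\boxtimes B}=\bar\mu\circ(\rho_M\ot\Delta)$ explicitly and composing with $\alpha_1$, the $\rho$-square of \eqref{eq:shield} turns into $\rho_M(m)\star c=\rho_M(m\cdot c)$, i.e.\ the $B$-linearity of $\rho_M$ for $\star$. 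Assembling these, a monoid action corresponds to a $B$-module structure $\alpha_0$ on $M$ making $(M,M\bul B,\pi_M,\rho_M)$ a Hopf partial comodule for which $\ker(\pi_M)$ is a $B\ot B$-submodule, and conversely; both passages keep the datum $\alpha_0$ fixed and are manifestly mutually inverse, which gives the bijection.

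For the last assertion, assume $B$ is a Hopf algebra and $M$ is a Hopf partial comodule, so $\pi_M$ is $B$-linear for the codiagonal action and hence $\ker(\pi_M)$ is stable under $\Delta(B)$. If $\ker(\pi_M)$ is a $B\ot B$-submodule then it is stable under $1\ot B$, which is the free $B$-action on the right tensorand, i.e.\ the datum is right equivariant in the sense of \cite[Definition 3.5]{JoostJiawey}. Conversely, if $\ker(\pi_M)$ is stable under the free right action, then for $c\in B$ the operator $m\ot b\mapsto mc\ot b$ on $M\ot B$ equals the codiagonal action by $c_{(1)}$ followed by the free right action by $S(c_{(2)})$ --- since $c_{(2)}S(c_{(3)})=\varepsilon(c_{(2)})1$ --- hence preserves $\ker(\pi_M)$; combined with stability under $1\ot B$ and the factorisation $c\ot d=(c\ot1)(1\ot d)$ this yields $B\ot B$-stability. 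So the additional property is equivalent to right equivariance.

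I expect the main obstacle to be the bookkeeping in the second step: extracting $(M\ot B)\bul B$, $\bar\mu$, $\pi_{M\boxtimes B}$ and $\rho_{M\boxtimes B}$ from the (asymmetric) $\boxtimes$-construction, and keeping straight the several $B$-actions in play --- the codiagonal action on $M\ot B$, the $B\ot B$-action, the free action on the right tensorand, and the induced action $\star$ on $M\bul B$ --- while verifying that the $\rho$-square really does collapse to the $B$-linearity of $\rho_M$. A minor point requiring care is that the unit of $\pcd{B}$ is only oplax; but its object component is the genuine unit of $\vectk$, so the reduction of the unitality axiom to its object part goes through unchanged.
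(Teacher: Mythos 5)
Your proposal is correct and follows essentially the same route as the paper's proof: both directions hinge on identifying the existence of the second component of the action morphism $M\boxtimes B\to M$ with the $B\ot B$-stability of $\ker(\pi_M)$, deducing $B$-linearity of $\rho_M$ and $\pi_M$ from the remaining square of \eqref{eq:shield}, and the Hopf-algebra case is handled by the same antipode identity $a\ot b=a_{(1)}\ot a_{(2)}S(a_{(3)})b$. The only difference is presentational: you compute the pushout $(M\ot B)\bul B$ element-wise as a quotient by $W=\mu_{M,B}\left(\ker(\pi_M)\ot B\ot B\right)$ and invoke faithfulness of the forgetful functor explicitly, whereas the paper argues diagrammatically via the induced $B\ot B$-module structures $\mu'_{M\ot B}$ and $\mu'_{M\bul B}$.
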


\begin{proof}
Let us begin by assuming that we have an associative and unital $B$-module structure $(\mu_M,\mu_M\bul B): M \boxtimes B \to M$ in $\pcd{B}$ over $\cC = \vectk$ (that is, \ref{item:parhopfcom2}). This means that we have a commutative diagram
\begin{equation}\label{eq:supershield}
\begin{gathered}
\xymatrix @!0 @C=75pt @R=38pt{
M\ot B \ar@{}[ddddrr]|-{(c)} \ar[dd]_-{\mu_M} \ar[dr]_(0.4){\rho_M\ot \Delta} & & M\ot B\ot B\ot B \ar[dl]_(0.55){\pi_M\ot B \ot B} \ar[dr]|(0.5){(M\ot B\ot \mu)\circ (M\ot\tau_{B,B}\ot B)} \ar@{}[dd]|-{(a)} & & M \ot B \ot B \ar@{=}[dl] \ar[dd]^-{\mu_M \ot B} \ar@{}[ddddll]|-{(b)} \\
 & (M\bul B)\ot(B\ot B) \ar[dr]_-{\bar\mu} & & M\ot B\ot B \ar[dl]^-{\pi_{M\ot B}} & \\
M \ar[ddrr]_-{\rho_M}  & & (M\ot B)\bul B \ar@{}[u]|<<<{\pushout}  \ar[dd]_-{\mu_M\bul B}  & & M \ot B \ar[ddll]^-{\pi_M} \\
 & & & & \\
 & & M \bul B & & 
}
\end{gathered}
\end{equation}
where $\mu_M : M \ot B \to M$ is associative and unital in the classical sense, as an action in $\vectk$. 
In this case we may consider the following composition
\[\mu'_{M\bul B} \coloneqq \left(M \bul B \ot B\ot B \xrightarrow{\bar\mu} (M \ot B) \bul B \xrightarrow{\mu_M \bul B} M \bul B\right).\]
The associativity of $\mu_M$ together with the fact that $\pi_M \ot B \ot B \ot B \ot B$ is an epimorphism entail that $(\mu_M \bul B) \circ \bar\mu$ equips $M \bul B$ with a $B \ot B$-module structure. 
Furthermore, the commutativity of the following diagram
\[
\xymatrix{
(M \ot B) \ot (B \ot B) \ar[rrr]^-{\pi_M \ot (B \ot B)} \ar[dr]|-{(M \ot B \ot \mu)\circ (M \ot \tau_{B,B} \ot B)} \ar[dd]_-{\mu'_{M \ot B}} & \ar@{}[dr]|-{(a)} & & (M\bul B) \ot (B \ot B) \ar[dl]^-{\bar\mu} \ar[dd]^-{\mu'_{M \bul B}} \\
 & (M \ot B) \ot B \ar[r]^-{\pi_{M \ot B}} \ar[dl]_-{\mu_M \ot B} & (M \ot B) \bul B \ar[dr]^-{\mu_M \bul B} & \\
M \ot B \ar[rrr]_-{\pi_M} & \ar@{}[ur]|-{(b)} & & M \bul B
}
\]
entails that $\pi_M$ becomes a morphism of $B \ot B$-modules and hence $\ker(\pi_M)$ inherits a $B \ot B$-module structure via $\mu'_{M \bul B}$. To conclude, notice that the commutativity of
\[
\xymatrix @!0 @R=50pt @C=130pt{
M \ot B \ar@{}[ddddr]|-{(c)} \ar[dr]^-{\rho_M \ot B} \ar[ddd]_-{\mu_M} & & M \ot B \ot B \ar[dl]_-{\pi_M \ot B} \ar[d]|-{M \ot B\ot \Delta} \ar@/^2.5cm/[ddd]^-{\mu_{M \ot B}} \\
 & M \bul B \ot B \ar[d]_-{M\bul B \ot \Delta} & M \ot B \ot B \ot B \ar[d]|-{\qquad (M \ot B \ot \mu)\circ (M \ot \tau_{B,B} \ot B)} \ar[dl]_-{\pi_M \ot B \ot B} \ar@{}[ddl]|-{(a)} \\
 & M\bul B \ot B\ot B \ar[d]_-{\bar\mu} & M \ot B \ot B \ar[d]|-{\mu_M \ot B} \ar[dl]_-{\pi_{M \ot B}} \ar@{}[ddl]|-{(b)} \\
M \ar[dr]_-{\rho_M} & (M \ot B) \bul B \ar[d]_-{\mu_M \bul B} & M \ot B \ar[dl]^-{\pi_M} \\
 & M\bul B & 
}
\]
implies that both $\rho_M$ and $\pi_M$ are morphisms of $B$-modules in $\vectk$.

Conversely, assume that $(M,M\bul B,\pi_M,\rho_M)$ is a Hopf partial comodule and that $\ker(\pi_M)$ is a $B \ot B$-submodule of $M \ot B$ with respect to the $B \ot B$-module structure
\[\mu'_{M \ot B} \coloneqq \left(M \ot B \ot B \ot B \xrightarrow{M \ot \tau_{B,B} \ot B} M \ot B \ot B \ot B \xrightarrow{\mu_M \ot \mu} M \ot B \ot B\right).\]
This means that $M \bul B$ inherits a $B \ot B$-module structure $\mu'_{M \bul B}$ induced by $\mu'_{M \ot B}$, that is,
\begin{equation}\label{eq:muprime}
\begin{gathered}
\xymatrix @R=30pt{
M \ot B \ot B \ot B \ar[rr]^-{\pi_M \ot B \ot B} \ar[d]|-{(M \ot B \ot \mu) \circ (M \ot \tau_{B,B} \ot B)} & & M \bul B \ot B \ot B \ar[d]^-{\mu'_{M \bul B}} \\
M \ot B \ot B \ar[r]_-{\mu_M \ot B} & M \ot B \ar[r]_-{\pi_M} & M \bul B
}
\end{gathered}
\end{equation}
commutes and hence there exists a unique $\mu_{M} \bul B : (M \ot B) \bul B \to M \bul B$ such that 
\begin{equation}\label{eq:bulmu}
(\mu_M\bul B) \circ \bar\mu = \mu'_{M \bul B} \qquad \text{and} \qquad (\mu_M\bul B) \circ \pi_{M \ot B} = \pi_M \circ (\mu_M \ot B).
\end{equation}
Notice also that since
\begin{align*}
\mu'_{M \bul B} & \circ (M \bul B \ot\Delta) \circ (\pi_M \ot B) = \mu'_{M \bul B} \circ (\pi_M \ot B \ot B) \circ (M \ot B \ot\Delta) \\
& \stackrel{\eqref{eq:muprime}}{=} \pi_M \circ (\mu_M\ot B) \circ (M \ot B \ot \mu) \circ (M \ot \tau_{B,B} \ot B) \circ (M \ot B \ot\Delta) \\
& \stackrel{\phantom{\eqref{eq:muprime}}}{=} \pi_M \circ \mu_{M \ot B} \stackrel{(*)}{=} \mu_{M \bul B} \circ (\pi_M \ot B),
\end{align*}
where in $(*)$ we used the fact that $\pi_M$ is right $B$-linear by hypothesis, then we have 
\begin{equation}\label{eq:muprimeagain}
\mu_{M \bul B} = \mu'_{M \bul B} \circ (M \bul B \ot\Delta)
\end{equation}
because $\pi_M \ot B$ is an epimorphism. Moreover, since
\[(\mu_B \bul B) \circ \bar\mu \circ (\rho_M \ot \Delta) \stackrel{\eqref{eq:bulmu}}{=} \mu'_{M \bul B} \circ (\rho_M \ot \Delta) \stackrel{\eqref{eq:muprimeagain}}{=} \mu_{M\bul B}\circ (\rho_M \ot B) \stackrel{(*)}{=} \rho_M \circ \mu_M,\]
where in $(*)$ we used the fact that $\rho_M$ is right $B$-linear,
we have that \eqref{eq:supershield} commutes and so $(\mu_M,\mu_M\bul B)$ is an action of $(B, B \ot B, \id_{B \ot B},\mu)$ on $(M,M \bul B,\pi_M,\rho_M)$ in $(\pcd{B},\boxtimes,\K)$. 

It is clear that the correspondence between \ref{item:parhopfcom1} and \ref{item:parhopfcom2} we just constructed is bijective.

Finally, the last claim follows from the fact that if $B$ is a Hopf algebra with antipode $S$, then for every $a,b \in B$ we have $a \ot b = a_{(1)} \ot a_{(2)}S\left(a_{(3)}\right)b$. Clearly, if $\ker(\pi_M)$ is, in addition, a $B \ot B$-submodule of $M \ot B$, then in particular it is a $B$-submodule with respect to the regular right $B$-action on the right tensorand. Conversely, if $\ker(\pi_M)$ is, in addition, a $B$-submodule of $M \ot B$ with respect to multiplication by $B$ on the right tensorand, then
\[\sum_i m_i \cdot x \ot b_iy = \sum_i m_i\cdot x_{(1)} \ot b_ix_{(2)}S\left(x_{(3)}\right)y \in \ker(\pi_M)\]
for all $\sum_i m_i \ot b_i \in \ker(\pi_M)$ and hence it is, in fact, a $B \ot B$-submodule of $M \ot B$.
\end{proof}


\begin{theorem}\label{thm:partialHopf}
Right Hopf partial comodules over $B$ are globalizable in the sense of Definition \ref{def:glob}. That is, ${\gphopfmod{B}}_{,\,gl} = \gphopfmod{B}$.
\end{theorem}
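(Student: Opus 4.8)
The plan is to reduce the statement to Theorem~\ref{thm:uffa} by exhibiting $B$ as a \emph{left flat} coalgebra in the abelian monoidal category $\cC = \Rmod{B}$. Once this is done, Theorem~\ref{thm:uffa} gives $\gparcom{B}_{gl} = \gparcom{B}$ over $\cC = \Rmod{B}$, and by Lemma~\ref{Hopfmodule1} the category $\gparcom{B}$ over $\cC = \Rmod{B}$ is exactly $\gphopfmod{B}$; the globalization of a Hopf partial comodule, being computed as an equalizer in $\com{B}$ (the category of Hopf modules over $B$), is again a Hopf module, so ${\gphopfmod{B}}_{,\,gl} = \gphopfmod{B}$ follows.

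First I would recall that $\cC = \Rmod{B}$ is abelian monoidal with $\otimes = \otimes_\K$, that $\I = \K$, and that $B$ with the regular right action is a coalgebra in $\cC$, as already observed before Lemma~\ref{Hopfmodule1}. The one technical point to verify is that the endofunctor $-\otimes_\K B : \Rmod{B}\to\Rmod{B}$ is left exact, i.e.\ preserves equalizers (equivalently finite limits, the finite biproduct part being automatic for an additive functor). But the underlying vector space of $M\otimes_\K B$ is $M\otimes_\K B$ computed in $\vectk$, and the forgetful functor $\Rmod{B}\to\vectk$ creates limits; since $-\otimes_\K B$ is exact on $\vectk$ (the ground field $\K$ makes every object flat), the composite $\Rmod{B}\xrightarrow{-\otimes_\K B}\Rmod{B}\to\vectk$ is exact, hence $-\otimes_\K B$ is left exact on $\Rmod{B}$. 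Thus $B$ is left flat in $\cC$.

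Next I would invoke Theorem~\ref{thm:uffa} with this $\cC$ and this $H = B$ to conclude $\gparcom{B}_{gl} = \gparcom{B}$ in $\Rmod{B}$. By Lemma~\ref{Hopfmodule1}, an object of $\gparcom{B}$ in $\Rmod{B}$ is precisely a Hopf partial comodule, and morphisms agree; so $\gparcom{B} \cong \gphopfmod{B}$ as categories. It remains to match the notion of globalization: Definition~\ref{def:glob} of globalization in $\gparcom{B}$ over $\Rmod{B}$ asks for a \emph{global} $B$-comodule in $\Rmod{B}$, i.e.\ a genuine Hopf module over $B$, together with the universal epimorphism; this is exactly what a globalization in $\gphopfmod{B}$ should be. Hence globalizability of a Hopf partial comodule as an object of $\gparcom{B}$ over $\Rmod{B}$ coincides with globalizability as a Hopf partial comodule, giving ${\gphopfmod{B}}_{,\,gl} = \gphopfmod{B}$.

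The main (and essentially only) obstacle is the flatness verification, and even that is routine because the tensor product is over a field and the forgetful functor $\Rmod{B}\to\vectk$ creates limits; there is no genuine difficulty here, merely the need to state carefully that left exactness of $-\otimes_\K B$ on $\Rmod{B}$ can be checked after applying the (limit-creating, hence limit-reflecting) forgetful functor to $\vectk$, where it is clear. A secondary bookkeeping point, already handled by Lemma~\ref{Hopfmodule1} and the discussion preceding it, is that pushouts, and hence the counitality and geometric coassociativity conditions, are computed the same way in $\Rmod{B}$ as in $\vectk$, so no subtlety is introduced in passing between the two descriptions of a Hopf partial comodule.
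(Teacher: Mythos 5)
Your proposal is correct and follows exactly the paper's route: the paper also deduces the theorem directly from Theorem~\ref{thm:uffa}, having noted beforehand that $\cC=\Rmod{B}$ is abelian monoidal, that $(B,\Delta,\varepsilon)$ with the regular right action is a coalgebra in $\cC$, and that $-\otimes B$ is continuous (so $B$ is left flat and $\com{B}\to\cC$ preserves the relevant limits). Your only addition is to spell out the flatness check via the limit-creating forgetful functor to $\vectk$, which the paper leaves implicit.
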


\begin{proof}
It follows directly from Theorem \ref{thm:uffa}.
\end{proof}

By combining the globalization theorem with the structure theorem for Hopf modules, we obtain the following.

\begin{theorem}[Fundamental Theorem for Hopf partial comodules over a Hopf algebra]\label{fundamentalHopf}
Let $H$ be a Hopf algebra over $\K$. 
Then the category $\gphopfmod{H}$ is equivalent with the category whose objects are pairs $(V,N)$ composed by a vector space $V$ and an $H$-submodule $N$ of $V\ot H$ such that $(V\ot \Delta(H))\cap (N\ot H)=0$, and whose morphisms $(V,N)\to (V',N')$ are given by $\K$-linear maps $f:V\to V'$ such that $(f\ot H)(N)\subset N'$.
\end{theorem}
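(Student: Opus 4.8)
The plan is to combine the globalization machinery of \S\ref{ssec:abelian} with the classical structure theorem for Hopf modules.

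By Lemma~\ref{Hopfmodule1}, $\gphopfmod{H}$ is exactly the category $\gparcom{H}$ of geometric partial comodules over $H$ computed in the abelian monoidal category $\cC=\Rmod{H}$; in this category $\com{H}$ is the category of right Hopf modules over $H$, which (as recalled at the start of this section) is complete with forgetful functor to $\cC$ preserving limits. Hence the hypotheses of Theorem~\ref{thm:abEquiv} are met, and we obtain an equivalence $\gphopfmod{H}\simeq\Cov^{H}_{pr}$ with the category of proper global covers. It remains only to identify $\Cov^{H}_{pr}$ with the category described in the statement.

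First I would dispose of the objects. By the structure theorem for Hopf modules, any global $H$-comodule $Y$ in $\cC$ is isomorphic, as a Hopf module, to $V\ot H$ with $V\coloneqq\coinv{Y}{H}$ and comodule structure $V\ot\Delta$, and every morphism of such Hopf modules $V\ot H\to V'\ot H$ is of the form $f_0\ot H$ for a unique $\K$-linear map $f_0:V\to V'$. Thus, up to isomorphism of covers, a cover with underlying comodule $V\ot H$ is the same datum as an $H$-linear epimorphism $p:V\ot H\twoheadrightarrow X$, equivalently as the $H$-submodule $N\coloneqq\Ker{p}$ of $V\ot H$ (with $X=(V\ot H)/N$ and $p$ the canonical projection). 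It therefore suffices to decide which $N$ yield proper covers and how morphisms translate.

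Properness is the crux. The cover $(V\ot H,X,p)$ is proper precisely when $(p\ot H)\circ(V\ot\Delta):V\ot H\to X\ot H$ is a monomorphism in $\com{H}$; the forgetful functor $\com{H}\to\vectk$ is exact (it factors through $\com{H}\to\Rmod{H}$, which is exact by Proposition~\ref{Hflat} as $H$ is $\K$-flat), hence it both preserves and reflects monomorphisms, so properness is equivalent to injectivity of $(p\ot H)\circ(V\ot\Delta)$ as a linear map. Now $\Ker{p\ot H}=N\ot H$ by flatness of $H$, and $V\ot\Delta$ is injective (it is split by $V\ot(\varepsilon\ot H)$), so this composite is injective if and only if $\Img{V\ot\Delta}\cap(N\ot H)=0$, i.e.~$\bigl(V\ot\Delta(H)\bigr)\cap(N\ot H)=0$. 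Hence proper covers correspond, up to isomorphism, exactly to the pairs $(V,N)$ of the statement. As for morphisms, a morphism of covers $\bigl(V\ot H,(V\ot H)/N,p\bigr)\to\bigl(V'\ot H,(V'\ot H)/N',p'\bigr)$ consists of a Hopf module morphism $F$ together with a morphism of the quotients compatible with the projections; writing $F=f_0\ot H$, such a compatible quotient map exists — and is then unique, $p$ being epic — exactly when $(f_0\ot H)(N)\subseteq N'$, which is precisely a morphism $(V,N)\to(V',N')$ in the target category. Checking that this assignment is functorial, one obtains an equivalence of $\Cov^{H}_{pr}$ with the category of the statement; composing it with $\gphopfmod{H}\simeq\Cov^{H}_{pr}$ proves the theorem. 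The one substantive point is the translation of properness into the intersection condition $\bigl(V\ot\Delta(H)\bigr)\cap(N\ot H)=0$; the remainder is routine bookkeeping with the structure theorem for Hopf modules.
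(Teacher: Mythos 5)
Your proposal is correct and follows essentially the same route as the paper: reduce to proper global covers via the abelian globalization equivalence (the paper does this through Theorem \ref{thm:partialHopf} and Corollary \ref{gpcalgcoalg}, you invoke Theorem \ref{thm:abEquiv} directly in $\Rmod{H}$), then apply the structure theorem for Hopf modules and translate properness into the condition $(V\ot \Delta(H))\cap(N\ot H)=0$. The only difference is cosmetic: the paper works in $\vectk$ and re-imposes $H$-linearity of $N$ via Lemma \ref{Hopfmodule1}, whereas you stay in $\Rmod{H}$ and re-derive inline the properness translation that the paper delegates to Corollary \ref{gpcalgcoalg}.
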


\begin{proof}
By Theorem~\ref{thm:partialHopf}, every Hopf partial comodule $M$ is globalizable to a (global) Hopf module $\cG(M)$. By applying the structure theorem of Hopf modules, we know that $\cG(M)\cong V\ot H$, where $V=\coinv{\cG(M)}{H}$ is the space of coinvariant elements. The description of partial comodules in Corollary~\ref{gpcalgcoalg} tells now that $M$ (as a geometric partial comodule in $\vectk$) is a quotient of $V\ot H$ by a subspace $N$ satisfying $(V\ot \Delta(H))\cap (N\ot H)=0$, with the induced partial comodule structure. Finally, the compatibility between the $H$-module and the partial $H$-comodule structure of $M$ (see Lemma~\ref{Hopfmodule1}) assures that $N$ should be an $H$-submodule of $V\ot H$. 
\end{proof}

\begin{corollary}\label{cor:fieldcase}
Let $H$ be a Hopf algebra with antipode $S$ and $\phi: H \to \K$ be a morphism of algebras. Up to isomorphism, the only Hopf partial $H$-comodule structure on the right $H$-module $\K_\phi$ is the trivial partial comodule structure.
\end{corollary}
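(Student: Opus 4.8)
The plan is to invoke the Fundamental Theorem~\ref{fundamentalHopf}, which realises $\gphopfmod{H}$ as the category of pairs $(V,N)$, with $V$ a vector space and $N$ a right $H$-submodule of the free right $H$-module $V\otimes H$ (regular action on the second tensorand), subject to $\big(V\otimes\Delta(H)\big)\cap\big(N\otimes H\big)=0$, and with the underlying right $H$-module of the Hopf partial comodule attached to $(V,N)$ being the quotient $(V\otimes H)/N$. Thus two Hopf partial comodules with underlying right $H$-module isomorphic to $\K_\phi$ are classified, up to isomorphism, by the corresponding pairs; and since the trivial partial comodule structure is one such, it suffices to show that, up to isomorphism of pairs, there is a \emph{unique} valid pair $(V,N)$ with $(V\otimes H)/N\cong\K_\phi$ as a right $H$-module.

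First I would parametrise these pairs. If $(V\otimes H)/N\cong\K_\phi$, then the canonical projection is a right $H$-module surjection $q\colon V\otimes H\to\K_\phi$ with $\ker(q)=N$; since $V\otimes H$ is free on $V\otimes 1$ and $\K_\phi$ is one-dimensional, $q$ is determined by a nonzero functional $\psi\colon V\to\K$ via $q(v\otimes h)=\psi(v)\phi(h)$. Writing $V=V_0\oplus\K v_0$ with $V_0\coloneqq\ker(\psi)$ and $\psi(v_0)=1$, a direct computation gives $N=(V_0\otimes H)\oplus(v_0\otimes I)$, where $I\coloneqq\ker(\phi)$.

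Next I would impose the compatibility $\big(V\otimes\Delta(H)\big)\cap\big(N\otimes H\big)=0$. In the decomposition $V\otimes H\otimes H=(V_0\otimes H\otimes H)\oplus(v_0\otimes H\otimes H)$ one has $N\otimes H=(V_0\otimes H\otimes H)\oplus(v_0\otimes I\otimes H)$, and for $\xi=\xi_0+v_0\otimes g$ with $\xi_0\in V_0\otimes H$ the element $(V\otimes\Delta)(\xi)=(V_0\otimes\Delta)(\xi_0)+v_0\otimes\Delta(g)$ lies in $N\otimes H$ precisely when $\Delta(g)\in I\otimes H$. As $V\otimes\Delta$ is injective (because $(\varepsilon\otimes H)\circ\Delta=\id$), the intersection vanishes if and only if every such $\xi$ is zero. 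Taking $g=0$ lets $\xi_0$ range freely over $V_0\otimes H$, which forces $V_0=0$, that is $\dim_\K V=1$; and it also forces $g=0$ whenever $\Delta(g)\in I\otimes H$. This last condition is automatic: applying $\phi\otimes H$ to $\Delta(g)\in I\otimes H$ kills the right-hand side and yields $\phi(g_{(1)})g_{(2)}=0$, and the left convolution operator $g\mapsto\phi(g_{(1)})g_{(2)}$ is bijective, with inverse $g\mapsto\phi\big(S(g_{(1)})\big)g_{(2)}$, since $\phi\circ S$ is the convolution inverse of the algebra map $\phi$ --- this is the one place where the antipode $S$ enters, matching the hypothesis. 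Hence $V=\K v_0$ and $N=v_0\otimes I$, so $(V,N)\cong(\K,\K\otimes I)$, and the same bijectivity argument shows that this pair is indeed valid.

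I expect the bookkeeping in the third step to be the main obstacle: one must identify $N$ and $N\otimes H$ correctly inside $V\otimes H\otimes H$ through the $v_0$-splitting, and recognise that it is the ``free'' $\xi_0$-direction that collapses $V$ to dimension one while the residual condition on $g$ is vacuous. To conclude, I would note that the pair $(\K,\K\otimes I)$ corresponds, via the quasi-inverse of the equivalence of Theorem~\ref{fundamentalHopf} (equivalently, via Corollary~\ref{gpcalgcoalg}), to the global comodule $(H,\Delta)$ together with the surjection $\phi\colon H\to\K_\phi$, i.e.\ to the induced --- hence trivial --- partial comodule structure on $\K_\phi\cong H/I$; alternatively, since a valid pair exists and is unique, and the trivial structure is certainly a Hopf partial $H$-comodule structure on $\K_\phi$, every such structure must be isomorphic to it.
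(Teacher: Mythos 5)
Your argument is correct, but it reaches the conclusion by a different route than the paper. You deduce the corollary from the Fundamental Theorem \ref{fundamentalHopf}: you classify the pairs $(V,N)$ whose quotient $(V\ot H)/N$ is $\K_\phi$ as a right $H$-module (via the functional $\psi$ and the splitting $V=V_0\oplus\K v_0$, giving $N=(V_0\ot H)\oplus(v_0\ot \ker\phi)$), and then show that the compatibility $(V\ot\Delta(H))\cap(N\ot H)=0$ forces $V_0=0$ while the residual condition on $g$ is vacuous because $g\mapsto\phi(g_{(1)})g_{(2)}$ is bijective with inverse $g\mapsto\phi\bigl(S(g_{(1)})\bigr)g_{(2)}$; uniqueness of the pair plus the fact that the trivial structure is one such structure then finishes the proof. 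The paper instead argues directly with the globalization: $\kappa=(\epsilon_\K\ot H)\circ\delta$ is a monomorphism of Hopf modules, so $\coinv{\cG(\K)}{H}$ injects into $\K$ and $\cG(\K)\cong H$ with $\epsilon_\K=\phi$ up to rescaling, and then the partial structure is identified as trivial because the pushout along the isomorphism $(\phi\ot H)\circ\Delta$ coincides with the pushout along the identity. The two proofs share the same key ingredient (the convolution-invertibility of $\phi$ via $\phi\circ S$, which is exactly the paper's isomorphism $(\phi\ot H)\circ\Delta$, and which is where the antipode enters); your step ``$V_0=0$'' plays the role of the paper's injectivity of $\coinv{\kappa}{H}$. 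What your route buys is an elementary, hands-on classification that makes the statement literally a corollary of Theorem \ref{fundamentalHopf}; what the paper's route buys is the explicit identification $\cG(\K)\cong H$ and an explicit isomorphism $\sigma:\K\bul H\to\K$ of partial comodules, without having to track underlying modules through the equivalence. One small gloss on your side: the claim that the structure induced from $(H,\Delta)$ along $\phi$ is ``hence trivial'' itself requires the pushout-along-an-isomorphism observation (this is precisely the paper's last step), but your alternative uniqueness argument covers this, so no gap remains.
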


\begin{proof}
In a nutshell, the globalization of $\K$ should be a non-zero Hopf submodule of $\K\ot H\cong H$, hence it can only be $H$ and, up to isomorphism, the only induced partial comodule structure from $H$ to $\K$ is the trivial one. 

More precisely, the trivial partial comodule structure on $\K$ makes of it a Hopf partial comodule. Now, assume that $(\K,\K \bul H,\pi_\K,\rho_\K)$ is another Hopf partial comodule structure on $\K_\phi$. We already know that $\K$ is globalizable and its globalization $(\cG(\K),\epsilon_\K)$ is of the form $\cG(\K) \cong \coinv{\cG(\K)}{H} \ot H$. Recall that $\kappa \coloneqq (\epsilon_\K \ot H)\circ \delta : \cG(\K) \to \K \ot H$ should be a monomorphism of Hopf modules over $H$ and hence $\coinv{\kappa}{H} : \coinv{\cG(\K)}{H} \to \coinv{\K \ot H}{H} \cong \K$ should be injective, which implies that $\cG(\K)\cong H$ as Hopf modules over $H$. Thus, we may assume that $\cG(\K)=H$. The canonical morphism $\epsilon_\K : H \to \K$ is uniquely determined by $\epsilon_\K(x) = \epsilon_\K(1_H)\phi(x)$ and, being $\epsilon_\K$ surjective, $\epsilon_\K(1_H) \neq 0$. Therefore, up to rescaling, we may also assume that $\epsilon_\K = \phi$. As a consequence, $\kappa = (\phi \ot H)\circ \Delta$ and we have a pushout square
\[
\xymatrix @!0 @R=20pt @C=50pt{
 & H \ar[dl]_-{\phi} \ar[dr]^(0.55){(\phi \ot H)\circ \Delta} & \\
\K \ar[dr]_-{\rho_\K} & & \K \ot H \ar[dl]^-{\pi_\K} \\
& \K \bul H. \ar@{}[u]|<<<{\pushout} & 
}
\]
Next, we observe that $(\phi \ot H)\circ \Delta :H \to \K \ot H, x \mapsto 1 \ot \phi(x_{(1)})x_{(2)}$, is an isomorphism of right $H$-modules with inverse $\K \ot H \to H, 1 \ot x \mapsto \phi S(x_{(1)})x_{(2)}$. As a consequence, also $(\K,\id_\K,\K \ot \varepsilon)$ is a pushout of $(\phi,(\phi \ot H)\circ \Delta)$ and so there exists a unique isomorphism of right $B$-modules $\sigma : \K \bul H \to \K$ such that $\sigma \circ \rho_\K = \id_\K$ and $\sigma \circ \pi_\K = \K \ot \varepsilon$. A quick observation reveals that $\sigma$ is, in fact, an isomorphism of geometric partial comodules.
\end{proof}

Let us conclude with a couple of concrete examples.

\begin{example}
Let $G$ be a group with neutral element $e$ and let $M$ be a representation of $G$ over $\K$ (that is, a left $\K G$-module). A left $\K G$-Hopf module structure on $M$ is the same as a $G$-grading $\{M_g \mid g\in G\}$ on $M$ such that $h\cdot M_g \subseteq M_{hg}$ for all $g,h \in G$. Inspired by this, we call \emph{partially graded $G$-representations} the left Hopf partial comodules over $\K G$.

By applying Theorem~\ref{fundamentalHopf}, partially graded $G$-representations are all and only of the following form. For a vector space $V$, consider $\K G \ot V$ with action $h \cdot (g \ot v) = hg \ot v$ and coaction $\delta(g \ot v) = g \ot g \ot v$ for all $h,g\in G$, $v\in V$. Now consider a $\K G$-submodule $N \subseteq \K G \otimes V$ that does not contain homogeneous elements (that is, such that $N \cap (g \ot V) = 0$ for all $g \in G$ or, equivalently, 
$(\Delta(\K G)\ot V)\cap (\K G\ot N)=0$) and define $M \coloneqq (\K G \ot V) / N$. Then $M$ is a partially graded $G$-representation with the induced structure
\[
\begin{gathered}
\xymatrix @!0 @R=24pt @C=65pt {
 & \K G \ot V \ar[dr]^(0.55){\quad (\K G\otimes p)\circ(\Delta \ot V)} \ar@{->>}[dl]_-{p} & \\
 M \ar[dr]_-{\rho} & & \K G\otimes M \ar@{->>}[dl]^-{\pi} \\
 & \K G\bul M. \ar@{}[u]|<<<{\pushout} &
}
\end{gathered}
\]
\end{example}

\begin{example}
This example shows that if $B$ is a bialgebra which is not a Hopf algebra, then the conclusion of Corollary \ref{cor:fieldcase} does not hold. Consider the monoid bialgebra over $\N$, that is, $\K[X]$ with $X$ group-like, and consider the algebra morphism $\phi: \K[X] \to \K, X \mapsto 0,$ making of $\K$ a $\K[X]$-module. The ideal $X\K[X]$ generated by $X$ is a $\K[X]$-Hopf module because it is also a subcoalgebra. Therefore, $Y \coloneqq \K \ot X\K[X]$ is a right $\K[X]$-Hopf module with structures uniquely determined by
\[(1 \ot X^n) \cdot X = 0 \qquad \text{and} \qquad \delta(1 \ot X^n) = (1 \ot X^n) \ot X^n\]
for $n\geq 1$. The restriction of $\K \ot \varepsilon$ to $Y$ provides a surjective right $\K[X]$-linear morphism $p:Y \to \K$ whose kernel is $\K \ot N$ where $N$ is the ideal $X(X-1)\K[X]$. It follows that
\[\K \xrightarrow{\rho_\K} \K \ot \frac{\K[X]}{\langle X(X-1) \rangle} \xleftarrow{\pi_\K} \K \ot \K[X]\]
where $\rho_\K(1) = 1 \ot X$, is a non-trivial geometric partial comodule structure on $\K$ in $\Rmod{\K[X]}$.
\end{example}


\addtocontents{toc}{\protect\setcounter{tocdepth}{2}}

\end{document}